\providecommand{\U}[1]{\protect\rule{.1in}{.1in}}
\newcommand{\hooklongrightarrow}{\lhook\joinrel\longrightarrow}
\newtheorem{prop}{Proposition}[section]
\newtheorem{cor}[prop]{Corollary}
\newtheorem{lem}[prop]{Lemma}
\newtheorem{theo}[prop]{Theorem}
\newcommand{\un}{\mathds{1}}
\newcommand{\EE}{\mathbb{E}}
\newcommand{\HH}{\mathbb{H}}
\newcommand{\LL}{\mathbb{L}}
\newcommand{\PP}{\mathbb{P}}
\newcommand{\QQ}{\mathbb{Q}}
\newcommand{\RR}{\mathbb{R}}
\newcommand{\UU}{\mathbb{U}}
\newcommand{\VV}{\mathbb{V}}
\newcommand{\XX}{\mathbb{X}}
\newcommand{\ZZ}{\mathbb{Z}}
\newcommand{\GG}{\mathbb{G}}
\newcommand{\Na}{ {\cal N }}
\newcommand{\Ka}{ {\cal K }}
\newcommand{\Ea}{ {\cal E }}
\newcommand{\Ra}{ {\cal R }}
\newcommand{\Va}{ {\cal V }}
\newcommand{\Ua}{ {\cal U }}
\newcommand{\Fa}{ {\cal F }}
\newcommand{\Ga}{ {\cal G }}
\newcommand{\Ma}{ {\cal M }}
\newcommand{\Pa}{ {\cal P }}
\newcommand{\Za}{ {\cal Z }}
\newcommand{\Ya}{ {\cal Y }}
\newcommand{\Wa}{ {\cal W }}
\newcommand{\point}{\mbox{\LARGE .}}
\newcommand{\cqfd}{\hfill\blbx \\}
\def\blbx{\hbox{\vrule height 5pt width 5pt depth 0pt}\medskip}
\def \PP{\mathbb{P}}
\def \RR{\mathbb{R}}
\def \EE{\mathbb{E}}
\def \LL{\mathbb{L}}
\def \ZZ{\mathbb{Z}}
\begin{document}

% =======================================================
  \title{A theoretical analysis of one-dimensional discrete generation ensemble Kalman particle filters}
  \author{P. Del Moral, E. Horton}

% =======================================================

\maketitle

\begin{abstract}

Despite the widespread usage of discrete generation Ensemble Kalman particle filtering metho\-dology to solve nonlinear and high dimensional filtering and inverse problems, little is known about their mathematical foundations. As genetic-type particle filters (a.k.a. sequential Monte Carlo), this ensemble-type  methodology  can also be interpreted as mean-field particle approximations of the Kalman-Bucy filtering equation.
In contrast with conventional mean-field type interacting particle methods equipped with a globally Lipschitz interacting drift-type function, Ensemble Kalman filters depend on a nonlinear and quadratic-type interaction function defined in terms of the  sample covariance of the particles. 

Most of the  literature in applied mathematics and computer science on these sophisticated interacting particle methods amounts to designing different classes of useable observer-type particle methods. These methods are based on a variety of inconsistent but judicious ensemble auxiliary transformations or include additional inflation/localisation-type algorithmic innovations, in order to avoid the inherent time-degeneracy of an insufficient particle ensemble size when solving a filtering problem with an unstable signal. 

To the best of our knowledge, the first and the only rigorous mathematical analysis of these sophisticated discrete generation particle filters is developed in the pioneering  articles by Le Gland-Monbet-Tran and by Mandel-Cobb-Beezley, which were published in the early 2010s. 
Nevertheless, besides the fact that these studies prove the asymptotic consistency of the Ensemble Kalman filter, they provide exceedingly pessimistic mean-error estimates that grow exponentially fast with respect to the time horizon, even for linear Gaussian filtering problems with stable one dimensional signals.

 In the present article we 
develop a novel self-contained and complete stochastic perturbation analysis of the fluctuations, the stability, and the long-time performance of these discrete generation  ensemble Kalman particle filters, including  time-uniform and non-asymptotic mean-error  estimates that apply to possibly unstable signals. To the best of our knowledge, these are the first results of this type in the literature on discrete generation particle filters, including the class of genetic-type particle filters and discrete generation ensemble Kalman filters. The stochastic Riccati difference equations considered in this work are also of interest in their own right, as a prototype of a new class of stochastic rational difference equation. 

\emph{Keywords} : {\em Ensemble Kalman Filter, stochastic Riccati difference equation, rational difference equations, non-central $\chi$-square, Feynman-Kac formulae, uniform estimates with respect to the time horizon, 
exponential stability, stochastic perturbation theorems.}
\newline

\emph{Mathematics Subject Classification : Primary: 60G35, 60K35, 65C35, 60F05; Secondary: 60J10, 39A30 , 65L07, 93D23.} 

\end{abstract}

\newpage
%\tableofcontents
%\newpage
\section{Introduction}
The Ensemble Kalman filter ({abbreviated EnKF}) is a class of interacting particle system methodologies for solving nonlinear filtering and inverse problems. They were introduced by Evensen in the seminal article~\cite{evensen-intro} published in 1994, see also~\cite{evensen-review,evensen-book} for a more recent overview.  In the last three decades the EnKF methodology has become one of the most used numerical techniques
for solving high dimensional forecasting and data assimilation problems in a variety of applications including ocean and atmosphere
sciences~\cite{allen,lisa,majda,kalnay,ott}, fluid mechanics~\cite{beyou,memin-1,memin-2}, image inverse problems~\cite{beyou-2}, weather forecasting~\cite{anderson-jl,anderson-jl-2,burgers,houte},  environmental and ecological statistics~\cite{eknes,johns} and oil reservoir simulations~\cite{evensen-reservoir,nydal,seiler,skj,weng}. 
To connect these EnKF techniques with particle filtering methodology for solving high dimensional problems arising in fluid mechanics, we also refer to~\cite{kantas}. 

In contrast with genetic-type particle filters (a.k.a. Sequential Monte Carlo, often abbreviated SMC), the EnKF is defined by a system of particles evolving as the signal in some state space with an interaction function that depends on the sample covariance matrices of the system. 
 For a detailed  mathematical description of particle filters and ensemble Kalman filter methodologies in both continuous and discrete time settings we refer, for instance, to the book~\cite{dm-penev} and the references therein. See also  section~\ref{sec-EnKF-intro} in the present article dedicated to nonlinear Kalman-Bucy type Markov chains and their mean-field particle interpretations.

 There exists a vast literature in data assimilation theory dedicated to the
numerical analysis of the EnKF view as a useable observer-type algorithm with a very small sample size and equipped with a variety of judicious ensemble square-root type transformations, as well as inflation/localisation-type algorithmic innovations to control the unstable directions of the signal, see for instance~\cite{anderson-3,anderson-4,kelly-law-stuart,kelly-law-tong,tong-16,tong}, as well as~\cite{bishop-18,bishop-20} and the references therein. From this point of view, the EnKF is no longer interpreted as a true approximation of the optimal filter but as an observer. Here, the terminology ``well-posedness" is not understood as a traditional mathematical consistency-type  property of some Monte Carlo statistical estimate but as a lack of divergence of the algorithm with respect to the time horizon.

Despite the widespread use of EnKF techniques to solve nonlinear and high dimensional filtering and inverse problems, little is known about their theoretical performance and their mathematical foundations. 
The first rigorous mathematical analysis of discrete generation EnKF appeared in 2011 in the independent pioneering works of 
 Le Gland,  Monbet and Tran~\cite{legland}, and Mandel,  Cobb and Beezley~\cite{mandel}.
These two seminal articles provide mean error estimates for discrete generation EnKF and show that the EnKF converges towards the Kalman filter for linear-Gaussian problems as the number of samples, say $N$, tends to infinity. The article~\cite{legland} also shows that the EnKF doesn't converge to the optimal filter for nonlinear or non-Gaussian filtering problems. 

 The consistent-type EnKF methodology for linear-Gaussian models can be extended to 
 nonlinear filtering problems  using the
 new class feedback particle filter methodology introduced by Mehta and Meyn and their co-authors in the series of seminal articles~\cite{prashant-1,prashant-2,prashant-3,prashant-4,prashant-5}. To obtain a consistent EnKF for nonlinear filtering problems one needs to solve a Poisson-type equation at every time step, which often requires an additional level of approximation. 
 The theoretical analysis of this type of sophisticated nonlinear EnKF is not discussed in the present article but we refer the reader to~\cite{prashant-5,prashant-6,tag-1} for the analysis of its performance and the convergence.

Besides these fundamental theoretical advances in the understanding of EnKF filters in linear-Gaussian models, the non-asymptotic analysis developed in the aforementioned articles yields exceedingly pessimistic estimates that grow exponentially fast with respect to the time horizon. Note that for a  time horizon $t=39$ an innocent exponential bound of the form $55\times e^{5 t}/N\geq 10^{86}/N$ will require a sample size $N$ that is $10$ times larger than the number $10^{86}$ of elementary particles of matter in the visible universe in order to obtain a poor $10\%$ accuracy after $39$ runs. 

The mathematical analysis of the continuous-time version of the EnKF methodology has started more recently in~\cite{DelMoral/Tugaut:2016,dm-k-tu}, followed by the series of articles~\cite{BishopDelMoralMatricRicc,dm-b-niclas,bishop-19}, see also review article~\cite{bishop-20}. Some extensions to nonlinear filtering problems are also developed in the series of more recent articles~\cite{lange-1,lange-2,lange-3,lange-4,sahani}. In contrast with the 
feedback particle filter methodology discussed above, these articles are concerned with a class of  EnKF particle filters that are only consistent for linear-Gaussian models. As expected, for nonlinear or non-Gaussian filtering problems, none of these EnKF converge to the optimal filter as the number of particles tends to infinity.

From a probabilistic viewpoint, in the linear-Gaussian case, continuous time  EnKF  are represented by a Kalman-Bucy-type diffusion coupled with a stochastic Riccati matrix nonlinear diffusion equation (see for instance Theorem 3.1~in~\cite{DelMoral/Tugaut:2016}).
In the series of articles~\cite{DelMoral/Tugaut:2016,dm-k-tu,BishopDelMoralMatricRicc,dm-b-niclas,bishop-19},  the authors present a refined stochastic stability analysis of these rather sophisticated diffusion-type perturbation models, including central limit theorems, as well as uniform bias and mean error estimates with respect to the time horizon for several classes of consistent EnKF stochastic models. 
 
To describe briefly these results, as underlined in the review article~\cite{bishop-20}, we emphasise that the continuous-time EnKF methodology (for linear-Gaussian models) may be broadly divided into three different classes of probabilistic models, according to the level of fluctuation added via sampling noise needed to ensure that the EnKF sample mean and covariance are consistent in the linear-Gaussian setting.
By chronological order of appearance in the literature, these three different classes of continuous-time EnKF are briefly discussed below.

The continuous time version of the EnKF discussed in section~\ref{sec-EnKF-intro} in the present article coincides with the so-called ``Vanilla'' discrete time original form of the EnKF~\cite{evensen-intro,burgers}. This first class of 
continuous-time EnKF exhibits the most fluctuations due to sampling both signal and observation noises (see for instance the evolution equation (\ref{kalman-EnKF-def})).

The second class of EnKF is a continuous-time version of the Sakov and Oke square root  filter (a.k.a. ``deterministic EnKF'') introduced in~\cite{sakov,sakov-2}, see also~\cite{reich,reich-2}. 
Unfortunately, this class of 
``deterministic EnKF'' is only consistent  for continuous time models. In the discrete time situation, it fails to converge to the optimal filter
as the number of particles tends to infinity, even in linear-Gaussian settings, see for instance~\cite{bishop-20} and references therein. This class of discrete generation EnKF is not discussed in the present article.

 Finally, the third class consists of purely deterministic transport-inspired EnKF with randomness coming only from the initial conditions. By construction, this class of filters can be analysed directly using the evolution semigroup of the optimal filter and that of the Riccati matrix equations. Further details on this subject can be found in the review article~\cite{bishop-20} on continuous time EnKF particle filters  and in the articles~\cite{ap-2016,floquet} dedicated to the stability of Kalman-Bucy diffusion processes and related Riccati matrix differential equations.   
 
 The stochastic analysis of the Riccati matrix diffusion associated with the sample covariance of these three classes of continuous-time EnKF is rather well understood for multivariate models. The articles~\cite{DelMoral/Tugaut:2016,dm-k-tu,BishopDelMoralMatricRicc,dm-b-niclas,bishop-19}  present several multivariate fluctuation theorems for Riccati matrix diffusions, as well as a refined non-asymptotic analysis, including several time-uniform mean error estimates.
 The extension of these uniform results to the Vanilla EnKF is developed in~\cite{DelMoral/Tugaut:2016,dm-k-tu} but only for stable and ergodic signals.
 The stability analysis and the long-time performance of Vanilla EnKF with possibly unstable and multivariate transient signals remains partially understood. 

To understand the difficulties that arise these problems it is worth mentioning that
even for elementary one-dimensional problems, the stochastic Riccati diffusions describing the random evolution of the sample variances of the EnKF may exhibit surprisingly 
heavy tailed or Gaussian tailed invariant distributions, depending on the class of EnKF one uses as well as the number of samples~\cite{bishop-19,bishop-20}.  In this setting, some raw moments of the sample variances of the Vanilla continuous time EnKF for one-dimensional filtering problems are infinite, for any chosen finite sample size lower than some critical value. 

 To the best of our knowledge, the question of finding uniform mean error estimates for the Vanilla EnKF sample means
remains
an important open research question for both discrete generation and continuous-time multivariate models. 
The only work in this direction appears to be the recent article~\cite{bishop-19}. This article  is dedicated to  the long time performance and the stability properties of the stochastic Riccati diffusion associated with the three different classes of continuous-time EnKF filters discussed above for {\em one-dimensional} and linear-Gaussian filtering problems with possibly unstable and transient signals. The extension of these results to multivariate models remains open.

The main advantage in working with continuous-time models comes from the fact that the sample mean and the sample covariance of the EnKF filters satisfy a coupled nonlinear diffusion equation which can be handled using conventional stochastic analysis. Note however that in the multivariate case, the sample covariance matrices  satisfy a stochastic Riccati equation involving matrix valued martingales which require one to develop an appropriate and more sophisticated stochastic analysis in matrix spaces~\cite{BishopDelMoralMatricRicc,dm-b-niclas}.

Physical systems and most of the filtering problems arising in signal processing and tracking are typically defined by continuous time models. Nevertheless, for obvious reasons, any numerical integration of these models including all known filtering sensors are defined by discrete-time models. As a result, the stochastic analysis used for the continuous-time EnKF is no longer applicable without adding an additional level of approximation. 

The analysis of discrete generation EnKF is more delicate. To handle the structure and the statistical difficulty of discrete time models, new stochastic tools need to be developed.
For instance, in contrast with continuous-time models, discrete generation EnKF are not defined by a single coupled diffusion process but in terms of coupled two-step prediction-updating process (a.k.a. forecasting-analysis steps in EnKF and data assimilation literature). Furthermore, the Gaussian-nature of the diffusion models arising in the analysis of continuous-time EnKF theory is also lost.  Another inherent difficulty is that discrete generation 
 EnKF  involve more sophisticated
 non-central $\chi$-square nonlinear fluctuations  (cf. for instance
Theorem~\ref{stoch-perturbation-theo} and Corollary~\ref{cor-p-chi-evol}).

We expect the multivariate version of the EnKF evolution to involve a similar two-step  prediction-updating 
transition involving sophisticated non-central Wishart matrix local fluctuations. 
Up to some additional technicalities and at the cost of multi-index and tensor theoretical notation, we believe that the analysis of multivariate and discrete generation stochastic Riccati equations can be conducted by extending the theory of discrete generation one-dimensional models developed in the present article in the spirit of the  stochastic analysis of continuous time EnKF models developed in~\cite{BishopDelMoralMatricRicc,bishop-19}. As for continuous time models, we also believe  that one cannot expect to directly deduce uniform state estimates from those at the level of the sample covariance matrices.
 
Due to these reasons, in the present article we have chosen to concentrate our study to one-dimensional models. We intend to extend these results to more sophisticated multivariate models in a subsequent article.

The main objective of this article is to develop a novel and complete stochastic analysis on the fluctuations and the long time performance of discrete generation  EnKF particle filters that applies to filtering problems involving possibly unstable signals. Our main contributions are listed succinctly below. For more precise statements of the results, we refer the reader to the series of theorems stated in section~\ref{statement-sec}.

$\bullet$ Up to some orthogonal transformations on the sample space, our first main result, Theorem~\ref{stoch-perturbation-theo}, states that the sample variances of the EnKF satisfy an autonomous stochastic Riccati-type evolution equation, which is independent of the sequence of observations, and that the EnKF sample means evolve as a Kalman filter driven by  sample variances. In addition, the local perturbations of the EnKF and the sample variance Riccati equations
 are independent. An alternative  Markov chain realisation of the stochastic Riccati difference equation is also discussed in Corollary~\ref{cor-p-chi-evol}. Theorem~\ref{stoch-perturbation-theo} is an extended version of the stochastic perturbation theorem presented in~\cite[Theorem 3.1]{DelMoral/Tugaut:2016}. The continuous time version of the sample variance equations stated in Theorem~\ref{stoch-perturbation-theo} also coincides with  the one-dimensional stochastic Riccati equation discussed in~\cite{bishop-19}, see also~\cite{BishopDelMoralMatricRicc} for the extended matrix version of these models in the context of multivariate filtering problems. 
 
 $\bullet$ Section~\ref{sec-stability-intro} is concerned with the stability properties of the optimal filter and the stochastic Riccati equations describing the evolution of the sample variances of the EnKF for possibly unstable signals. In this context, we show that
the discrete generation stochastic Riccati equations converge exponentially fast to a single invariant measure. The contraction theorem, Theorem~\ref{stab-riccati-stoch}, and the exponential semigroup estimates (\ref{expo-decay-p}) stated in Theorem~\ref{key-theo}
are partial extensions to discrete generation models of Theorem 2.2 in the article~\cite{bishop-19}, dedicated to the stability of one dimensional stochastic Riccati equations. In Theorem~\ref{stab-riccati+M-stoch} we also show that the sample variances and the difference between the sample means and the true (possibly transient) signal forms a Markov chain that converges exponentially fast to a unique invariant measure. This seems to be the first result of this kind in the field of interacting particle-type filters, including EnKF and genetic-type particle filters. 

$\bullet$ Our third main result is concerned with uniform mean-error and bias estimates with respect to the time horizon for both the sample variances and the sample means delivered by the EnKF. Theorem~\ref{theo-u-p} shows that the sample variances are uniformly close to the true optimal variances for all times, even when the signal and the ensemble are transient. The EnKF sample mean uniform accuracy for possibly unstable signals is discussed in theorem~\ref{key-theo}. This result ensures that the sample means are uniformly close to the true optimal filter at any time horizon. The continuous time version of these results are discussed in Corollary 2.4 in~\cite{bishop-19}.

$\bullet$ Last, but not least, we present a novel fluctuation analysis of EnKF filters.  
Theorems~\ref{theo-clt-p-intro} and~\ref{term-theo} present new multivariate central limit  theorems for the sample mean and the sample variance processes. These fluctuation results, at the level of the processes, are an extended multivariate version of central limit theorem presented in~\cite{bishop-19} in the context of continuous time models.

\section{Description of the models}\label{desc-sec-intro}
\subsection{The Kalman filter}
Consider a one dimensional, time homogeneous linear-Gaussian filtering model of the following form
\begin{equation}\label{lin-Gaussian-diffusion-filtering}
\left\{
\begin{array}{rcl}
Y_n&=&C\,X_n+D\,V_n\\
X_{n+1}&=&A\,X_{n}+B\,W_{n+1},
\end{array}
\right.
\qquad n \ge 0.
\end{equation}
In the above display, $(V_n,W_{n+1})$ is a sequence of $2$-dimensional 
Gaussian centered random variables with unit variance, the initial condition of the signal $X_0$ is a Gaussian random variable with mean and variance denoted by $(\widehat{X}^-_0,P_0)$ (independent of $(V_n,W_{n+1})$), and $(A,B,C,D)$ are some non-zero parameters. The latter condition can be interpreted as a controllability and observability condition for one-dimensional filtering problems.

Let $\Ya_n=\sigma\left(Y_k,~k\leq n\right)$ be the filtration generated by the observation process. The optimal filter is defined by the
conditional distribution $\widehat{\eta}_n$ of the signal state $X_n$ given $\Ya_n$. The distribution $\widehat{\eta}_n$ is a Gaussian  distribution with  conditional mean and variance  
$$
\widehat{X}_n:=\EE(X_n~|~\Ya_n)\quad\mbox{\rm and}\quad
\widehat{P}_n:=\EE\left(\left(X_n-\widehat{X}_n\right)^{2}\right).
$$ 
The optimal one-step predictor is defined by the conditional distribution $\eta_{n}$ of the signal state $X_n$ given $\Ya_{n-1}$. The distribution $\eta_n$ is also a Gaussian  distribution with conditional  mean and variance  
$$
\widehat{X}^-_n:=\EE(X_n~|~\Ya_{n-1})\quad\mbox{\rm and}\quad
P_n:=\EE\left(\left(X_{n}-\widehat{X}_{n}^{-}\right)^{2}\right).
$$ 
The updating-prediction steps  are described by the following synthetic diagram
$$
(\widehat{X}_n^-,P_n)\longrightarrow (\widehat{X}_{n},\widehat{P}_{n})\longrightarrow (\widehat{X}^-_{n+1},P_{n+1}),
$$
with the well-known updating-prediction Kalman filter equations:
\begin{equation}\label{kalman-rec}
\left\{\begin{array}{rcl}
\widehat{X}_n&=&\widehat{X}_n^-+G_n~\left(Y_n-C\widehat{X}_n^-\right) \\
 \widehat{P}_n&=&(1-G_n C)P_n
\end{array}\right.\quad\mbox{\rm and} \quad\left\{\begin{array}{rclcrcl}
\widehat{X}_{n+1}^-&=&A \widehat{X}_{n}\\
 P_{n+1}&=& A^2\widehat{P}_{n}+R,
\end{array}\right.
\end{equation}
where $R = B^2$. In the above display, $G_n$ stands for the so-called gain parameter defined by the formula
$$
G_n:=CP_n/(C^2P_n+D^2)\Longrightarrow 1-G_nC=1/(1+SP_n)\quad \mbox{\rm with}\quad S:=(C/D)^2.
$$
The evolution equations of the variance parameters $(P_n,\widehat{P}_n)$ satisfy the Riccati rational difference equations
\begin{equation}\label{def-Ricc}
P_{n+1}=\phi\left(P_n\right):=\frac{aP_n+b}{cP_n+d}\quad \mbox{\rm and}\quad
\widehat{P}_{n+1}=\widehat{\phi}\left(\widehat{P}_n\right):=\frac{\widehat{a}\,\widehat{P}_n+\widehat{b}}{\widehat{c}\,\widehat{P}_n+\widehat{d}}
\end{equation}
with the parameters
\begin{equation}\label{def-Ricc-parameters}
\begin{array}{l}
(a,b,c,d):=\left(A^2+RS,R,S,1\right)\quad \mbox{\rm and}\quad (\widehat{a},\widehat{b},\widehat{c},\widehat{d})=(A^2,R,A^2S,1+SR)\\
\\
\Longrightarrow 
ad-bc=A^2=\widehat{a}\,\widehat{d}-\widehat{b}\,\widehat{c}>0.
\end{array}\end{equation}
One-dimensional rational difference equations are rather well understood, see for instance~\cite{brand}. Section~\ref{sec-riccati} also provides a refined stability analysis and several comparison properties of rational difference equations including non-asymptotic Taylor expansions any order.

\subsection{Ensemble Kalman filters}\label{sec-EnKF-intro}
Let $(\textgoth{X}_0, \textgoth{W}_n,\textgoth{V}_n)$ be independent copies of $(X_0, W_n,V_n)$.  Consider the nonlinear Markov chain starting at $\textgoth{X}_0$ and defined sequentially for any $n\geq 0$ by the updating-correction formulae
\begin{equation}\label{kalman-X-goth}
\left\{
\begin{array}{rcl}
\widehat{\textgoth{X}}_{n}&=&\textgoth{X}_n+\textgoth{G}_{\eta_n}~(Y_n-(C\,\textgoth{X}_n+D\,\textgoth{V}_n))\quad\mbox{\rm with}\quad \textgoth{G}_{\eta_n}~:=C\textgoth{Q}_{\eta_n}/(C^2\textgoth{Q}_{\eta_n}
+D^2)\\
&&\\
\textgoth{X}_{n+1}&=& A\,\textgoth{X}_{n}+B\,\textgoth{W}_{n+1}.\\
\end{array}
\right.
\end{equation}
In the above display, $\textgoth{Q}_{\eta_n}$ stands for the variance parameter
$$
\textgoth{Q}_{\eta_n}:=\int \left(x- \int z~ \eta_n(dz)\right)^2\eta_n(dx)\quad\mbox{\rm indexed by}\quad\eta_n=\mbox{\rm Law}(\textgoth{X}_n~|~\Ya_{n-1}).
$$
Using a simple induction argument, it is straightforward to show that
$$
\eta_n=\Na\left(\widehat{X}^-_n,P_n\right)\quad \mbox{\rm and}\quad \widehat{\eta}_n=\Na\left(\widehat{X}_n,\widehat{P}_n\right)=\quad\mbox{\rm Law}(\widehat{\textgoth{X}}_n~|~\Ya_{n}).
$$
Also observe that the above stochastic model can be interpreted as a Markov chain
with a two-step updating-prediction transition  described by the following synthetic diagram
$$
\textgoth{X}_n\longrightarrow \widehat{\textgoth{X}}_{n}\longrightarrow \textgoth{X}_{n+1}.$$
Note that the updating transition $\textgoth{X}_n\longrightarrow \widehat{\textgoth{X}}_{n}$
depends on the observation $Y_n$ delivered by the sensor as well as on 
the conditional law of $\textgoth{X}_n$ given the information $\Ya_{n-1}$.

The EnKF associated to the filtering problem (\ref{lin-Gaussian-diffusion-filtering}) coincides with the mean-field particle interpretation of the nonlinear Markov chain (\ref{kalman-X-goth}).
To define these models, we let $\xi_0=(\xi_0^i)_{1\leq i\leq N+1}$ be a sequence of $(N+1)$ independent copies of  $X_0$, for some parameter $N\geq 1$. We also denote by $\Wa_n=(\Wa^i_n)_{i\geq 1}$ and $\Va_n=(\Va^i_n)_{i\geq 1}$ a sequence of independent copies of $W_n$ and $V_n$.

 The mean field particle interpretation of the nonlinear Markov chain discussed above is given by the interacting particle system defined sequentially for any $1\leq i\leq N+1$ and $n\geq 0$ by the formulae
\begin{equation}\label{kalman-EnKF-def}
\left\{
\begin{array}{rcl}
\widehat{\xi}^i_{n}&=&\xi^i_n+g_n~(Y_n-(C\xi^i_n+D\Va^i_n))\quad\mbox{\rm with}\quad g_n:=Cp_n/(C^2p_n
+D^2)\\
&&\\
\xi^i_{n+1}&=& A\,\widehat{\xi}^i_{n}+B\,\Wa^i_{n+1}.
\end{array}
\right.
\end{equation}
In the above display $p_n$ stands for the normalised sample variance
$$
p_n:=\frac{1}{N}\sum_{1\leq i\leq N+1}(\xi^i_n-m_n)^2=\left(1+\frac{1}{N}\right) \textgoth{Q}_{\eta^N_n}\quad \mbox{\rm with}\quad
\eta^N_n:=\frac{1}{N+1}\sum_{1\leq i\leq N+1}\delta_{\xi^i_n}.
$$
We also consider the sample means
\begin{equation}\label{sm-kalman-EnKF}
 m_n:=\frac{1}{N+1}\sum_{1\leq i\leq N+1}\xi^i_n
\quad\mbox{\rm and}\quad \widehat{m}_n:=\frac{1}{N+1}\sum_{1\leq i\leq N+1}\widehat{\xi}^i_n
\end{equation}
and the $\sigma$-field filtrations
$$
\begin{array}{l}
\Ga_n:=\sigma(\xi_0,\Wa_k,\Va_k, 0\leq k\leq n)\supset \sigma(\xi_k,\widehat{\xi}_k, ~0\leq k\leq n),\\
\\
\widehat{\Fa}_n:=\Ya_{n}\vee \Ga_n
\supset\Fa_n:=\widehat{\Fa}_{n-1}\vee \sigma(\Wa_n)\supset \widehat{\Fa}_{n-1}\vee\sigma(\xi_n).
\end{array}
$$

Note that this implies that
$$
\widehat{\Fa}_n\supset \Ya_{n}\quad \mbox{\rm and}\quad \Fa_n\supset \Ya_{n-1}.
$$

In the rest of the article we shall assume that $\xi_0$, $\Va_n$ and $\Wa_n$ are column vectors.
We also write $X~\underline{\in}~\Fa$ when a random variable is $\Fa$-measurable. With this notation, 
at every time $n\geq 0$, the updating-prediction steps of the ensemble Kalman filter  are described by the following synthetic diagram
$$
(m_n,p_n)~\underline{\in}~ \Fa_n\longrightarrow (\widehat{m}_{n},\widehat{p}_{n})~\underline{\in}~ \widehat{\Fa}_n\longrightarrow (m_{n+1},p_{n+1})~\underline{\in}~ \Fa_{n+1}.
$$
\subsection{Some basic notation}
This section presents some basic notation and preliminary results necessary for the statement of our main results. 

Throughout the rest of the article, we write $\tau$, $\tau_k$ as well as $\tau(l)$ and $\tau_k(l)$ for some collection of  non-universal constants whose values may vary from line to line, 
but only depend on the parameters $k$ and $l$ in some given parameter spaces. We use the letters $\iota$, $\iota_k$, $\iota(l)$, $\iota_k(l)$ 
as well as $\epsilon$ and $\epsilon_k\in ]0,1]$  when these constants may also depend
on the parameters $(A,B,C,D)$ of the filtering model.  
 Importantly these constants do not depend on the time parameter nor on the number of particles.

We denote by $\chi^{2}_{N ,Nx}$  a collection a non-central $\chi$-square random variables indexed by the degree of freedom $N\geq 1$ and non centrality parameter $N x\geq 0$. We recall that for any $N \ge 1$, we have
$$
\begin{array}{l}
\displaystyle 
\frac{1}{N}~\chi^2_{N,Nx}=\frac{1}{N}~\sum_{1\leq i\leq N}~\left(Z_i+\sqrt{x}\right)^2=1+x+
\sqrt{x}~\frac{2}{N}~\sum_{1\leq i\leq N}~Z_i+\frac{\sqrt{2}}{N}~\sum_{1\leq i\leq N}~\frac{Z_i^2-1}{\sqrt{2}},
\end{array}
$$
 for some given sequence $(Z_i)_{i\geq 1}$ of independent and centered Gaussian random variables. Rewritten in a slightly different form,  for any  $u,v$ with $v\not=0$  we have
\begin{eqnarray}
\frac{v}{N}~\chi^2_{N,N(u/v)^2}&=&
\displaystyle v+\frac{u^2}{v}+
\frac{2}{\sqrt{N}}~u~ \widecheck{\Za}+\sqrt{\frac{2}{N}}~v~ \widetilde{\Za},
\label{p-noncentral-chi}
\end{eqnarray}
where
 $$
 \widecheck{\Za}:=\frac{1}{\sqrt{N}}~\sum_{1\leq i\leq N}~Z_i\quad \mbox{\rm and}\quad
 \widetilde{\Za}:=\frac{1}{\sqrt{N}}~\sum_{1\leq i\leq N}~\frac{Z_i^2-1}{\sqrt{2}}.
$$
Note that these definitions imply that $\widecheck{\Za}$ is a centred Gaussian random variable with unit variance and $\widetilde{\Za}$ is a centered $\chi$-square variable. We also consider the extended random vector
\begin{equation}\label{def-Delta}
\Delta:=\left(
\begin{array}{c}
 \widehat{Z}\\
  \widecheck{\Za}\\
   \widetilde{\Za}
\end{array}
\right)\quad\mbox{\rm with}\quad  \widehat{Z}:=\frac{1}{\sqrt{N+1}}~\sum_{1\leq i\leq N+1}~Z_{N+i}.
\end{equation}
In a similar manner, we define $(\widehat{\Delta}_{n},\Delta_{n+1})$ as the sequence of random vectors
\begin{equation}\label{def-Delta-n}
\widehat{\Delta}_{n}:=\left(
\begin{array}{c}
\widehat\Delta^1_n\\
\widehat\Delta^2_n\\
\widehat\Delta^3_n
\end{array}
\right)=\left(
\begin{array}{l}
\widehat{V}_{n}\\
 \widecheck{\Va}_{n}\\
\displaystyle\widetilde{\Va}_n\end{array}\right)
\quad
\mbox{\rm and}
\quad
\Delta_{n+1}:=\left(
\begin{array}{c}
\Delta_{n+1}^1\\
\Delta_{n+1}^2\\
\Delta_{n+1}^3
\end{array}
\right)=
\left(\begin{array}{l}
\widehat{W}_{n+1}\\
 \widecheck{\Wa}_{n+1}\\
\displaystyle\widetilde{\Wa}_{n+1}
\end{array}\right)
\end{equation}
 indexed by the time parameter $n\geq 0$ and defined as $\Delta$ by replacing the sequence $(Z_i)_{i\geq 1}$ by the sequences of Gaussian variables
$\Va_n=(\Va^i_n)_{i\geq 1}$  and $\Wa_{n+1}=(\Wa^i_{n+1})_{i\geq 1}$ introduced in in the definition of the EnKF (\ref{kalman-EnKF-def}). For $n=0$ we also set
$$
\Delta_0:=\left(
\begin{array}{c}
\Delta^1_0\\
\Delta^2_0
\end{array}
\right)=\left(
\begin{array}{c}
\widehat{W}_0\\
\widetilde{\Wa}_0
\end{array}
\right)
$$
with $\widehat{W}_0$ and $\widetilde{\Wa}_0$ defined as $\widehat{Z}$ and $\widetilde{\Za}$ by replacing the sequence $Z_i$ with the sequence of Gaussian variables
$$
\Wa_0^i:=\frac{\xi^i_0-\widehat{X}^-_0}{\sqrt{P_0}},
$$
where we recall that the $\xi^i_0$ are independent copies of the Gaussian initial condition $X_0$ of the signal used to initialise the EnKF in (\ref{kalman-EnKF-def}). 
In this notation, up to a sequence of Helmert orthogonal  transformations on the sequence of Gaussian vectors $\Wa_0^i$, the initial condition of the ensemble Kalman filter is given by the stochastic perturbation formulae
\begin{equation}\label{def-initial-condition}
m_0=\widehat{X}^-_0+\frac{1}{\sqrt{N+1}}~\upsilon_0
\quad
\mbox{and}
\quad
p_0=P_0+\frac{1}{\sqrt{N}}~\nu_0,
\end{equation}
with the initial local perturbations defined by 
\begin{gather*}
\upsilon_0:=\sqrt{P_0}~\Delta^1_0\quad \mbox{and}\quad
\nu_0:=\sqrt{2}~P_0~\Delta^2_0.
\end{gather*}
Observe that the sample means are written in terms of random perturbations $\upsilon_0$ which are independent of the perturbations $\nu_0$ of the sample variances. The independence between the complete sufficient statistic $m_0$ and the ancillary statistic $p_0$ can also be checked using Basu's theorem~\cite{basu}. An alternative algebraic and direct approach based on Helmert orthogonal  matrices can be conducted using the same arguments as those used in the proof of Theorem~\ref{stoch-perturbation-theo}.

\section{Statement of the main results}\label{statement-sec}
\subsection{Local perturbation theorems}

 Consider the  local stochastic perturbations defined for any $n\geq 0$ by the formulae
\begin{gather}\label{def-loc-fluctuation}
\left\{
\begin{array}{rcl}
\widehat{\upsilon}_n&:=&-g_nD~\widehat{\Delta}^1_n\\
&&\\
\widehat{\nu}_n&:=&-2g_nD(1-g_nC)~\sqrt{p_n}~\widehat{\Delta}^2_n+\sqrt{2}~g_n^2D^2~\widehat{\Delta}^3_n
\end{array}\right.~
\left\{
\begin{array}{rcl}
\upsilon_{n+1}&:=&B~\Delta^1_{n+1}\\
&&\\
\nu_{n+1}&:=&2AB~\sqrt{\widehat{p}_n}~\Delta^{2}_{n+1}+\sqrt{2}~R~\Delta^3_{n+1}.
\end{array}\right.
\end{gather}
In the above display, $(\Delta_n,\widehat{\Delta}_n)_{n\geq 0}$ are the random vectors defined in (\ref{def-Delta-n}), and $g_n$ is the particle gain defined in (\ref{kalman-EnKF-def}).

\begin{theo}\label{stoch-perturbation-theo}
Up to a sequence of orthogonal transformations on the sequence of Gaussian vectors $\Wa_n$ and $\Va_n$, the ensemble Kalman filter and the Riccati equation are given by the initial condition (\ref{def-initial-condition}) and the updating-prediction transitions for $n \ge 0$
\begin{gather*}
\left\{
\begin{array}{rcl}
\displaystyle \widehat{m}_{n}&=&\displaystyle m_n+g_n~(Y_n-Cm_n)+\frac{1}{\sqrt{N+1}}~\widehat{\upsilon}_n\\
&&\\
\widehat{p}_n&=&\displaystyle(1-g_nC)~p_n+\frac{1}{\sqrt{N}}~ \widehat{\nu}_n
\end{array}
\right.~
\left\{
\begin{array}{rcl}
\displaystyle m_{n+1}&=&\displaystyle A~\widehat{m}_{n}+\frac{1}{\sqrt{N+1}}~\upsilon_{n+1}\\
&&\\
p_{n+1}&=&\displaystyle  A^2~\widehat{p}_{n}+R+\frac{1}{\sqrt{N}}~\nu_{n+1}.
\end{array}
\right.
\end{gather*}
\end{theo}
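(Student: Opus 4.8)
The plan is to obtain the four recursions by a direct computation on the particle system~(\ref{kalman-EnKF-def}), reserving the phrase ``up to orthogonal transformations'' for a single conditional rotation argument applied to the noise vectors $\Va_n$ and $\Wa_{n+1}$. First I would average~(\ref{kalman-EnKF-def}) over the index $i$. Since $\frac{1}{N+1}\sum_{i}\Va^i_n=\frac{1}{\sqrt{N+1}}\,\widehat{\Delta}^1_n$ and $\frac{1}{N+1}\sum_{i}\Wa^i_{n+1}=\frac{1}{\sqrt{N+1}}\,\Delta^1_{n+1}$, the updating and prediction identities for $\widehat{m}_n$ and $m_{n+1}$ drop out at once, with $\widehat{\upsilon}_n=-g_nD\,\widehat{\Delta}^1_n$ and $\upsilon_{n+1}=B\,\Delta^1_{n+1}$; this step is pure bookkeeping and uses only that $g_n$ is $\Fa_n$-measurable.

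For the variances I would subtract the mean recursions from the particle recursions to obtain the centred increments
\[
\widehat{\xi}^i_n-\widehat{m}_n=(1-g_nC)(\xi^i_n-m_n)-g_nD\Big(\Va^i_n-\tfrac{1}{N+1}\textstyle\sum_j\Va^j_n\Big),
\]
and similarly $\xi^i_{n+1}-m_{n+1}=A(\widehat{\xi}^i_n-\widehat{m}_n)+B(\Wa^i_{n+1}-\frac{1}{N+1}\sum_j\Wa^j_{n+1})$. Squaring, summing, and dividing by $N$ yields in each case a deterministic quadratic, a cross term, and an empirical-variance-of-the-noise term. In the updating step the deterministic part is $(1-g_nC)^2p_n+g_n^2D^2$, which I would collapse to $(1-g_nC)p_n$ using the gain identity $1-g_nC=D^2/(C^2p_n+D^2)$; isolating this algebraic simplification (it is exactly the scalar updating Riccati relation) is what forces the deterministic term of $\widehat{p}_n$ to match the statement, while the analogous prediction computation gives the deterministic term $A^2\widehat{p}_n+R$ directly.

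The substantive step is the rewriting of the two stochastic contributions. Writing $e_n=(\xi^i_n-m_n)_{i}$, which is orthogonal to $\mathbf{1}$ with $\|e_n\|^2=Np_n$, the cross term equals $\frac1N\langle e_n,\Va_n\rangle=\frac{\sqrt{p_n}}{\sqrt N}\,\langle e_n/\|e_n\|,\Va_n\rangle$ and the empirical-variance term equals $\frac1N\|\Pi\Va_n\|^2$, where $\Pi$ is the orthogonal projection onto $\mathbf{1}^\perp$; the latter is a $\chi^2_N/N=1+\sqrt{2/N}\,\widetilde{\Va}_n$ by~(\ref{p-noncentral-chi}). I would then choose the orthonormal basis of $\RR^{N+1}$ whose zeroth vector is $\mathbf{1}/\sqrt{N+1}$ and whose first vector is $e_n/\|e_n\|$, and read off the $e_n/\|e_n\|$-coordinate as $\widehat{\Delta}^2_n$ and the residual chi-square as $\widehat{\Delta}^3_n$, while $\widehat{\Delta}^1_n$ is the $\mathbf{1}$-coordinate. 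The prediction step is verbatim the same with $(\Fa_n,e_n,\Va_n)$ replaced by $(\widehat{\Fa}_n,(\widehat{\xi}^i_n-\widehat{m}_n)_i,\Wa_{n+1})$.

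The main obstacle is the measure-theoretic content of ``up to orthogonal transformations.'' The rotating basis depends on $e_n$, which is $\Fa_n$-measurable, whereas $\Va_n$ is independent of $\Fa_n$; conditionally on $\Fa_n$ the vector $\Va_n$ is standard Gaussian, so applying the ($\Fa_n$-measurable) orthogonal matrix that sends the canonical frame to the frame above leaves $\Va_n$ standard Gaussian and independent of $\Fa_n$. The delicate points I expect to spend time on are: checking that this is consistent across all times so that the full family $(\widehat{\Delta}_n,\Delta_{n+1})_{n\ge0}$ has the joint law declared in~(\ref{def-Delta-n}); that the $\mathbf{1}$-coordinate $\widehat{\Delta}^1_n$ is automatically independent of $(\widehat{\Delta}^2_n,\widehat{\Delta}^3_n)$ because it is an orthogonal coordinate of the same Gaussian, which is the source of the independence of the mean- and variance-perturbations; and that the sign of the cross term, negative in $\widehat{\nu}_n$, is absorbed by allowing a reflection in the chosen frame. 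Finally, completing the square in the combined cross-plus-variance term recasts $p_{n+1}$ as $\frac{R}{N}\chi^2_{N,\,NA^2\widehat{p}_n/R}$, which is the alternative Markov realisation of Corollary~\ref{cor-p-chi-evol}.
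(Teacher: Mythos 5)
Your proposal is correct and follows essentially the same route as the paper's proof: average and centre the particle recursions, collapse $(1-g_nC)^2p_n+g_n^2D^2$ to $(1-g_nC)p_n$ via the gain identity, and invoke orthogonal invariance of the Gaussian noise to identify the $\mathbf{1}$-coordinate with the mean perturbation and the $\mathbf{1}^{\perp}$-coordinates (the cross term along the configuration direction plus the projected $\chi^2_N$ fluctuation) with the variance perturbation, the independence of the two coming precisely from the orthogonality of these coordinates. The only cosmetic difference is that the paper realises the rotation with a fixed Helmert matrix $\HH$ together with configuration-dependent unit vectors $\zeta_n,\widehat{\zeta}_{n-1}$, whereas you construct the configuration-adapted orthonormal frame directly and argue by conditional rotation invariance given $\Fa_n$; the content is identical.
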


The proof of the above theorem is provided in section~\ref{proof-theo-stoch-intro}.
Merging the updating and prediction steps
 we check the following corollary.
 \begin{cor}\label{cor-srre}
The  sample variance $p_n$ of the EnKF satisfies the  stochastic Riccati rational difference equation given by the formula
\begin{equation}\label{ref-eq-p-Markov}
p_{n+1}=\phi(p_n)+\frac{1}{\sqrt{N}}~\delta_{n+1}\quad \mbox{with}\quad \delta_{n+1}:=A^2~\widehat{\nu}_n+\nu_{n+1}.
\end{equation}
In the above display, $\phi(p)$ is the one step mapping of the Riccati equation defined in (\ref{def-Ricc}), and the random variables $(\widehat{\nu}_n,\nu_{n+1})$ on the right hand side of (\ref{ref-eq-p-Markov}) are the local fluctuations defined in \eqref{def-loc-fluctuation}.
\end{cor}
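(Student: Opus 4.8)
The plan is to obtain the closed one-step recursion for the sample variance simply by composing the updating and prediction transitions of Theorem~\ref{stoch-perturbation-theo}, and then to verify that the resulting deterministic drift is exactly the rational map $\phi$ from (\ref{def-Ricc}).

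First I would substitute the updating formula $\widehat{p}_n=(1-g_nC)\,p_n+\tfrac{1}{\sqrt{N}}\,\widehat{\nu}_n$ into the prediction formula $p_{n+1}=A^2\,\widehat{p}_n+R+\tfrac{1}{\sqrt{N}}\,\nu_{n+1}$ supplied by Theorem~\ref{stoch-perturbation-theo}. This gives
$$
p_{n+1}=A^2(1-g_nC)\,p_n+R+\frac{1}{\sqrt{N}}\left(A^2\,\widehat{\nu}_n+\nu_{n+1}\right).
$$
Since both transitions carry the common normalisation $1/\sqrt{N}$, the two noise contributions merge into a single term, and the factor $A^2$ from the prediction step rescales the updating fluctuation $\widehat{\nu}_n$; thus the bracket is precisely $\delta_{n+1}=A^2\,\widehat{\nu}_n+\nu_{n+1}$ as announced. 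It remains only to identify the deterministic part $A^2(1-g_nC)\,p_n+R$ with $\phi(p_n)$.

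This identification is the single (elementary) computation of the proof. From the definition $g_n=Cp_n/(C^2p_n+D^2)$ one gets $1-g_nC=D^2/(C^2p_n+D^2)=1/(1+Sp_n)$, with $S=(C/D)^2$, exactly as for the Kalman gain in (\ref{kalman-rec}). Consequently
$$
A^2(1-g_nC)\,p_n+R=\frac{A^2p_n}{1+Sp_n}+R=\frac{(A^2+RS)\,p_n+R}{Sp_n+1},
$$
and reading off the coefficients against $(a,b,c,d)=(A^2+RS,R,S,1)$ from (\ref{def-Ricc-parameters}) shows that the right-hand side equals $\phi(p_n)=(ap_n+b)/(cp_n+d)$. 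Combining this with the previous display yields the claimed stochastic Riccati recursion (\ref{ref-eq-p-Markov}).

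Because every ingredient is already delivered by Theorem~\ref{stoch-perturbation-theo} and the model definitions of Section~\ref{desc-sec-intro}, there is no real obstacle: the corollary is a one-line substitution followed by the rational-function identity above. The only point demanding a moment's attention is bookkeeping, namely ensuring that the $A^2$ of the prediction step multiplies the updating noise $\widehat{\nu}_n$ before it is grouped with $\nu_{n+1}$, which is exactly what produces the asymmetric weighting $A^2\widehat{\nu}_n+\nu_{n+1}$ in the definition of $\delta_{n+1}$.
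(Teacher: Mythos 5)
Your proof is correct and is exactly the argument the paper intends: the corollary is obtained by merging the updating and prediction transitions of Theorem~\ref{stoch-perturbation-theo}, and your verification that $A^2(1-g_nC)\,p_n+R=\phi(p_n)$ via $1-g_nC=1/(1+Sp_n)$ is the same elementary identity underlying the Riccati map in (\ref{def-Ricc}). Nothing is missing; the paper simply leaves this substitution implicit.
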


Let $\Ra_n:=\sigma(p_k,~0\leq k\leq n)\vee \Ya_{n}$  be the $\sigma$-field generated by the stochastic Riccati equation $p_k$ and the observation sequence up to the time horizon $k\leq n$ and set
$$
\widehat{m}_{n}^r:=
\EE(\widehat{m}_{n}~|~\Ra_{n})\quad \mbox{\rm and}\quad m_{n+1}^r:= \EE(m_{n+1}~|~\Ra_{n}).
$$
Using this notation, the next corollary is a direct consequence of Theorem~\ref{stoch-perturbation-theo}.
 \begin{cor}
 Given the sample variances and the sequence of observations, for any $n \ge 0$,
 the updating-prediction steps of the sample mean of the EnKF satisfy the quenched Kalman-filter equations
$$
 \widehat{m}_{n}^r =m_n^r+ g_n(Y_n-C\,m_n^r)\quad \mbox{and}\quad
m_{n+1}^r=\displaystyle A~\widehat{m}_{n}^r,
$$
with the initial condition $m_{0}^r=\widehat{X}^-_0$.
  \end{cor}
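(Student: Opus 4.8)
The plan is to take conditional expectations given $\Ra_n$ of the two sample-mean perturbation equations of Theorem~\ref{stoch-perturbation-theo} and check that the driving local perturbations average out. Recall from that theorem that, up to the orthogonal transformations, the sample means obey
$$
\widehat m_n = m_n + g_n\,(Y_n - C m_n) + \frac{1}{\sqrt{N+1}}\,\widehat\upsilon_n,
\qquad
m_{n+1} = A\,\widehat m_n + \frac{1}{\sqrt{N+1}}\,\upsilon_{n+1},
$$
with $\widehat\upsilon_n = -g_n D\,\widehat\Delta^1_n = -g_nD\,\widehat V_n$ and $\upsilon_{n+1} = B\,\Delta^1_{n+1} = B\,\widehat W_{n+1}$, together with $m_0 = \widehat X^-_0 + (N+1)^{-1/2}\upsilon_0$ and $\upsilon_0 = \sqrt{P_0}\,\widehat W_0$. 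Two elementary facts make the conditioning transparent: the gain $g_n = Cp_n/(C^2 p_n + D^2)$ is a function of $p_n$ and hence $\Ra_n$-measurable, and $Y_n$ is $\Ya_n \subset \Ra_n$-measurable; moreover the only stochastic ingredients remaining on the right-hand sides are the centred ``mean-type'' perturbations $\widehat V_n,\widehat W_{n+1},\widehat W_0$.

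The crux will be the independence statement that $\{\widehat V_k,\widehat W_k\}_{k\ge0}$ is independent of $\Ra_\infty := \sigma(p_k:k\ge0)\vee\Ya_\infty$, where $\Ya_\infty := \sigma(Y_k:k\ge0)$. This is precisely the decoupling established inside Theorem~\ref{stoch-perturbation-theo}: after the Helmert orthogonal transformations the mean-type components $\widehat V_k,\widehat W_k$ are independent of the variance-type components $(\widecheck\Va_k,\widetilde\Va_k,\widecheck\Wa_k,\widetilde\Wa_k)$, while the recursion of Corollary~\ref{cor-srre} shows $\sigma(p_k:k\ge0)$ is generated solely by those variance-type components and the initial variance. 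Since the particle sampling noises $(\xi_0,\Wa_\bullet,\Va_\bullet)$ are independent of the signal/observation pair $(X_\bullet,V_\bullet)$ defining $\Ya_\infty$, the mean-type perturbations are also independent of $\Ya_\infty$; combining the two yields the claim. In particular each $\widehat V_n,\widehat W_{n+1},\widehat W_0$ is centred and independent of $\Ra_m$ for every $m\ge0$.

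I would then run a short induction producing the decomposition $m_n = m^r_n + (N+1)^{-1/2}\mu_n$ and $\widehat m_n = \widehat m^r_n + (N+1)^{-1/2}\widehat\mu_n$, where $m^r_n,\widehat m^r_n$ are $\Ra_n$-measurable and $\mu_n,\widehat\mu_n$ are linear combinations of the $\{\widehat V_k,\widehat W_k\}$ with $\Ra_n$-measurable coefficients. The base case is $m^r_0=\widehat X^-_0$, $\mu_0=\upsilon_0$. Feeding the decomposition into the updating equation gives $\widehat m^r_n = m^r_n + g_n(Y_n-Cm^r_n)$ and $\widehat\mu_n=(1-Cg_n)\mu_n-Dg_n\widehat V_n$; feeding it into the prediction equation gives $m^r_{n+1}=A\,\widehat m^r_n$ and $\mu_{n+1}=A\widehat\mu_n+B\widehat W_{n+1}$. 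Because the coefficients are $\Ra_n$-measurable and the $\{\widehat V_k,\widehat W_k\}$ are centred and independent of every $\Ra_m$, applying $\EE(\cdot\mid\Ra_n)$ annihilates the $\mu$-terms, identifying $m^r_n=\EE(m_n\mid\Ra_n)$ and $\widehat m^r_n=\EE(\widehat m_n\mid\Ra_n)$ with the asserted quenched Kalman recursion.

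The one genuine subtlety I would handle with care is the mild mismatch of filtration indices: the stated updating step of epoch $n$ uses $m^r_n$, whereas by definition $m^r_n$ arises from conditioning on $\Ra_{n-1}$ in the prediction step of epoch $n-1$. This is harmless precisely because $m_n$ differs from its $\Ra_{n-1}$-measurable part only through centred mean-type perturbations independent of the \emph{whole} of $\Ra_\infty$, so passing to the finer $\sigma$-field $\Ra_n\supset\Ra_{n-1}$ (which adjoins only $p_n$ and $Y_n$, both independent of those perturbations) leaves the conditional mean unchanged, i.e. $\EE(m_n\mid\Ra_n)=\EE(m_n\mid\Ra_{n-1})=m^r_n$. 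The decomposition above encodes this automatically, since it only ever uses that the $\mu$-coefficients are $\Ra_n$-measurable while the perturbations are independent of every $\Ra_m$.
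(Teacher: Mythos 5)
Your proof is correct and takes essentially the same route the paper intends: the paper presents this corollary as a direct consequence of Theorem~\ref{stoch-perturbation-theo}, and your argument---conditioning the perturbation equations on $\Ra_n$, noting that $g_n$ and $Y_n$ are $\Ra_n$-measurable, and using that the centred mean-type perturbations $\widehat{V}_k,\widehat{W}_k$ are independent of the variance process and the observation sequence---is exactly that direct consequence with the details (joint independence, the induction, and the $\Ra_n$ versus $\Ra_{n-1}$ conditioning) made explicit.
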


The next corollary provides an alternative interpretation of the updating-prediction steps in terms of a two-steps Markov chain involving non-central $\chi$-square random variables.

\begin{cor}\label{cor-p-chi-evol}
The updating-prediction steps of the stochastic Riccati difference equations stated in Theorem~\ref{stoch-perturbation-theo} take the following form
\begin{equation}\label{wp-p-chi}
\widehat{p}_n=\left(\frac{p_n}{1+Sp_n}\right)^2~\frac{S}{N}~\widehat{\chi}^{(n,2)}_{N,N/(Sp_n)}\quad \mbox{and}\quad p_{n+1}=\frac{R}{N}~\chi^{(n+1,2)}_{N,N(A^2/R)\widehat{p}_{n}},
\end{equation}
with the initial condition
$$
p_0=\frac{P_0}{N}~\chi^{(0,2)}_{N,0}.
$$
In the above display, $\chi^{(n,2)}_{N,x}$ and $\widehat{\chi}^{(n,2)}_{N,x}$ stand for a collection of independent non-central $\chi$-square random variables with $N$ degrees of freedom
and non-centrality parameter $x$, which are independent of the local perturbation sequence $(\upsilon_{k+1},\widehat{\upsilon}_k)$ of the EnKF filter defined in Theorem~\ref{stoch-perturbation-theo}.
\end{cor}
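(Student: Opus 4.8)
The plan is to read the claimed $\chi$-square representation directly off Theorem~\ref{stoch-perturbation-theo} by matching its updating and prediction formulae against the non-central $\chi$-square identity (\ref{p-noncentral-chi}), and then to deduce the independence assertion from the orthogonal (Helmert) structure underlying the local fluctuations. First I would treat the updating step. By Theorem~\ref{stoch-perturbation-theo} and the definition (\ref{def-loc-fluctuation}) of $\widehat{\nu}_n$,
\[
\widehat{p}_n=(1-g_nC)p_n+\frac{1}{\sqrt{N}}\left(-2g_nD(1-g_nC)\sqrt{p_n}\,\widehat{\Delta}^2_n+\sqrt{2}\,g_n^2D^2\,\widehat{\Delta}^3_n\right).
\]
Recalling that $\widehat{\Delta}^2_n=\widecheck{\Va}_n$ and $\widehat{\Delta}^3_n=\widetilde{\Va}_n$ play the roles of $\widecheck{\Za}$ and $\widetilde{\Za}$, I would set $v:=g_n^2D^2$ and $u:=-g_nD(1-g_nC)\sqrt{p_n}$, so that the right-hand side matches $v+u^2/v+(2/\sqrt{N})\,u\,\widecheck{\Za}+\sqrt{2/N}\,v\,\widetilde{\Za}$ as soon as the constant term is correct, i.e. as soon as $v+u^2/v=(1-g_nC)p_n$. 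After cancelling $(1-g_nC)^2p_n$ this reduces to $g_nD^2=C(1-g_nC)p_n$, which is immediate from $g_n=Cp_n/(C^2p_n+D^2)$ and $1-g_nC=1/(1+Sp_n)=D^2/(C^2p_n+D^2)$. Hence (\ref{p-noncentral-chi}) yields $\widehat{p}_n=(v/N)\chi^2_{N,N(u/v)^2}$.

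Next I would simplify the parameters using the same gain identities together with $S=(C/D)^2$. A direct computation gives $v=g_n^2D^2=(p_n/(1+Sp_n))^2\,S$ and $(u/v)^2=(1-g_nC)^2p_n/(g_n^2D^2)=1/(Sp_n)$, which is exactly the first formula in (\ref{wp-p-chi}). The prediction step is handled identically: from Theorem~\ref{stoch-perturbation-theo} and (\ref{def-loc-fluctuation}),
\[
p_{n+1}=A^2\widehat{p}_n+R+\frac{1}{\sqrt{N}}\left(2AB\sqrt{\widehat{p}_n}\,\Delta^2_{n+1}+\sqrt{2}\,R\,\Delta^3_{n+1}\right),
\]
and choosing $v:=R$, $u:=AB\sqrt{\widehat{p}_n}$ and using $R=B^2$ gives $v+u^2/v=R+A^2\widehat{p}_n$ and $(u/v)^2=(A^2/R)\widehat{p}_n$, so that $p_{n+1}=(R/N)\chi^2_{N,N(A^2/R)\widehat{p}_n}$. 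The initial condition is the degenerate case of (\ref{def-initial-condition}) with $u=0$ and $v=P_0$, producing $p_0=(P_0/N)\chi^2_{N,0}$.

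It then remains to justify the independence clause. By the identifications above, each $\chi$-square variable is a deterministic function of the pair $(\widehat{\Delta}^2_n,\widehat{\Delta}^3_n)$ at the updating step, respectively $(\Delta^2_{n+1},\Delta^3_{n+1})$ at the prediction step, whereas the mean perturbations $\widehat{\upsilon}_n$ and $\upsilon_{n+1}$ from (\ref{def-loc-fluctuation}) depend only on the single components $\widehat{\Delta}^1_n$ and $\Delta^1_{n+1}$. By the Helmert construction in (\ref{def-Delta})--(\ref{def-Delta-n}), the component $\widehat{\Delta}^1$ is built from a block of the underlying i.i.d. Gaussians disjoint from the block generating $(\widehat{\Delta}^2,\widehat{\Delta}^3)$, whence the two are independent; and the families $\Va_n$ and $\Wa_{n+1}$ are mutually independent across time and type. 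Assembling these facts gives that the $\chi^{(n,2)}_{N,\cdot}$ and $\widehat{\chi}^{(n,2)}_{N,\cdot}$ are mutually independent and jointly independent of the sequence $(\upsilon_{k+1},\widehat{\upsilon}_k)$.

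I expect the algebraic matching to be entirely routine once the correct $(u,v)$ are identified, so the only genuinely delicate point is the last one: verifying, at the level of the orthogonal decomposition inherited from Theorem~\ref{stoch-perturbation-theo}, that the \emph{quadratic} functionals producing the $\chi$-square variables are independent of the \emph{linear} functionals producing the mean perturbations. This hinges on the disjoint-block structure of the Helmert transformation rather than on any computation, and it is where I would be most careful to keep the index bookkeeping consistent with the convention fixed in (\ref{def-Delta}).
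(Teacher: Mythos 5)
Your proposal is correct and is precisely the argument the paper has in mind: the paper itself states that Corollary~\ref{cor-p-chi-evol} is ``a direct consequence of the decomposition (\ref{p-noncentral-chi})'' and leaves the verification as an exercise, which your matching of $(u,v)=\bigl(-g_nD(1-g_nC)\sqrt{p_n},\,g_n^2D^2\bigr)$ for the updating step and $(u,v)=\bigl(AB\sqrt{\widehat{p}_n},\,R\bigr)$ for the prediction step carries out, the key identities $g_nD^2=C(1-g_nC)p_n$, $g_n^2D^2=S\bigl(p_n/(1+Sp_n)\bigr)^2$ and $B^2=R$ all checking out. Your justification of the independence clause via the disjoint-block (Helmert) structure of $(\widehat{\Delta}^1_n,\Delta^1_{n+1})$ versus $(\widehat{\Delta}^2_n,\widehat{\Delta}^3_n,\Delta^2_{n+1},\Delta^3_{n+1})$ in (\ref{def-Delta})--(\ref{def-Delta-n}) is likewise exactly the mechanism underlying Theorem~\ref{stoch-perturbation-theo}.
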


 The proof of the above corollary is a direct consequence of the decomposition (\ref{p-noncentral-chi}), thus it is left as an exercise for the reader.

\subsection{Stability theorems}\label{sec-stability-intro}
We denote by  $\phi^n=\phi^{n-1}\circ\phi$  the composition evolution semigroup of the Riccati difference equation (\ref{def-Ricc}). We also let $\widehat{X}_n(x,p)$ be the solution of the  Kalman filter
associated with the initial values $(\widehat{X}_0,P_0)=(x,p)$. The next theorem summarises the main stability properties of the Kalman filter and the Riccati difference equations.

\begin{theo}\label{theo-ricc-Kalman-intro}
The Riccati  equation (\ref{def-Ricc})  has an unique positive fixed point $P_{\infty}=\phi(P_{\infty})>0$ and for any $n\geq 0$ and any $p=(p_1,p_2)\in \RR_+^2$, we have the exponential contraction estimates
\begin{gather}\label{ricc-lower-cv-2}
 \vert \phi^n(p_1)-\phi^n(p_2)\vert\leq \iota_1~\left(1-\epsilon_1\right)^n~\vert p_1-p_2\vert.
\end{gather}
In addition, there exists some integer $k(p_1)\geq 1$ such that for any $x=(x_1,x_2)\in\RR^2$ we have 
\begin{equation}\label{p1-p2-q-2}
\EE\left(\left(\widehat{X}_n(x_1,p_1)- 
\widehat{X}_n(x_2,p_2)\right)^2\right)^{1/2}\leq~\iota(x,p)~(1-\epsilon_2)^{n-k(p_1)}~\left(\vert p_1-p_2\vert+\vert x_1-x_2\vert\right).
\end{equation}

\end{theo}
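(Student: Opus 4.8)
The plan is to prove the two estimates in turn, deriving the Kalman contraction (\ref{p1-p2-q-2}) from the Riccati contraction (\ref{ricc-lower-cv-2}). For the Riccati part I would first locate the fixed points of $\phi(p)=(ap+b)/(cp+d)$ by solving $cp^2+(d-a)p-b=0$. Since $bc=SR>0$, the product of the two roots equals $-b/c<0$, so exactly one root $P_\infty$ is positive and the other, call it $z<0$, is negative, which gives existence and uniqueness. For the contraction I would exploit that $\phi$ is the Möbius map attached to $M=\left(\begin{smallmatrix} a&b\\ c&d\end{smallmatrix}\right)$, whose eigenvalues $\lambda_1>\lambda_2>0$ satisfy $\lambda_1\lambda_2=ad-bc=A^2$ and $\lambda_1+\lambda_2=a+d=A^2+RS+1$. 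The value of the characteristic polynomial at $1$, namely $1-(A^2+RS+1)+A^2=-RS<0$, forces $\lambda_2<1<\lambda_1$, and a short computation gives the crucial identity $1+SP_\infty=\lambda_1$, whence $\phi'(P_\infty)=A^2/(1+SP_\infty)^2=\lambda_2/\lambda_1=:1-\epsilon_1\in(0,1)$. To obtain the global estimate (\ref{ricc-lower-cv-2}) for all $p_1,p_2\in\RR_+$, I would conjugate $\phi$ to a dilation: with $h(p):=(p-P_\infty)/(p-z)$ one has $h\circ\phi=(1-\epsilon_1)\,h$, hence $\phi^n(p)=h^{-1}\big((1-\epsilon_1)^n\,h(p)\big)$. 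Because $z<0$, the maps $h$ and $h^{-1}$ have uniformly bounded derivatives on $[0,\infty)$ and on the bounded interval $h([0,\infty))$ respectively (the pole of $h^{-1}$ at $u=1$, reached only as $p\to\infty$, is kept away by the factor $(1-\epsilon_1)^n$ for $n\ge1$), so a mean value estimate yields (\ref{ricc-lower-cv-2}) with $\iota_1$ depending only on $(P_\infty,z)$.

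For the Kalman part I would merge the recursions (\ref{kalman-rec}) into the single mean recursion $\widehat{X}_{n+1}(x,p)=\alpha_n\,\widehat{X}_n(x,p)+\gamma_n\,Y_{n+1}$, where $\alpha_n=A(1-G_{n+1}C)=A/(1+SP_{n+1})$ and $\gamma_n=G_{n+1}$ are \emph{deterministic} coefficients that depend on $p$ only through $P_k=\phi^k(p)$. Writing $e_n:=\widehat{X}_n(x_1,p_1)-\widehat{X}_n(x_2,p_2)$ and adding and subtracting $\alpha_n^{(1)}\widehat{X}_n(x_2,p_2)$ produces the affine recursion $e_{n+1}=\alpha_n^{(1)}e_n+\beta_n$, with mismatch term $\beta_n=(\alpha_n^{(1)}-\alpha_n^{(2)})\widehat{X}_n(x_2,p_2)+(\gamma_n^{(1)}-\gamma_n^{(2)})Y_{n+1}$ whose coefficients, by the Lipschitz dependence of $\alpha,\gamma$ on the variance together with (\ref{ricc-lower-cv-2}), are bounded by $\iota\,(1-\epsilon_1)^n|p_1-p_2|$. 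Variation of constants then gives
\begin{equation*}
e_n=\Big(\textstyle\prod_{k=0}^{n-1}\alpha_k^{(1)}\Big)(x_1-x_2)+\sum_{j=0}^{n-1}\Big(\textstyle\prod_{k=j+1}^{n-1}\alpha_k^{(1)}\Big)\beta_j .
\end{equation*}

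Since $\phi^k(p_1)\to P_\infty$ with $|A|/(1+SP_\infty)=\sqrt{1-\epsilon_1}<1$, there is a first integer $k(p_1)$ after which $|\alpha_k^{(1)}|\le 1-\epsilon_2$; the finitely many earlier factors only contribute a constant, so $\big|\prod_{k=j+1}^{n-1}\alpha_k^{(1)}\big|\le \iota(p_1)\,(1-\epsilon_2)^{\,n-1-j}$. The one genuinely delicate point is the inhomogeneous sum for an \emph{unstable} signal: from (\ref{lin-Gaussian-diffusion-filtering}) one has $\EE(Y_j^2)^{1/2}$ and $\EE(\widehat{X}_j(x_2,p_2)^2)^{1/2}$ of order $|A|^j$ (times a factor depending on $x_2,p_2$), so $\EE(\beta_j^2)^{1/2}\lesssim (1-\epsilon_1)^j|A|^j|p_1-p_2|$, which \emph{grows}; all the decay must come from the homogeneous weights. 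The estimate closes precisely because the rates match: using $1+SP_\infty=\lambda_1$ one checks $(1-\epsilon_1)\,|A|/\sqrt{1-\epsilon_1}=\sqrt{1-\epsilon_1}\,|A|=A^2/\lambda_1=\lambda_2<1$, so the convolution $\sum_j (1-\epsilon_2)^{\,n-1-j}\big[(1-\epsilon_1)|A|\big]^j$ sums to a convergent geometric series and is $\lesssim (1-\epsilon_2)^{n}$. Taking $L^2$ norms and absorbing the transient from the first $k(p_1)$ steps into $\iota(x,p)$ yields (\ref{p1-p2-q-2}). I expect this last step to be the main obstacle — taming the signal-driven growth $|A|^j$ of the mismatch for transient signals — and its resolution rests entirely on the inequality $\lambda_2<1$, i.e. on the sign $-RS<0$ of the characteristic polynomial at $1$, which is the discrete-time expression of the observability/detectability that forces the filter's error contraction to outrun the signal's instability.
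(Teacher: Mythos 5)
Your proposal is correct, and its Riccati half coincides with the paper's own argument: the paper also solves the rational difference equation in closed form (Lemma~\ref{lem-ricc}, following~\cite{brand}), which is exactly your M\"obius conjugation $h\circ\phi=(\lambda_2/\lambda_1)\,h$ written out explicitly, and your identities $1+SP_\infty=\lambda_1$, $\phi'(P_\infty)=\lambda_2/\lambda_1$, $\lambda_2<1<\lambda_1$ all check out. For the filter estimate (\ref{p1-p2-q-2}), however, you take a genuinely different route. The paper never lets the unstable signal enter any noise term: it first studies the tracking error $\widehat{X}_n(x,p)-X_n$, whose recursion has contraction coefficient $A/(1+S\phi^n(p))$ and innovation $\widehat{U}_n(p)$ satisfying $\EE(\widehat{U}_n(p)^2)\le R+1/S$ \emph{uniformly in time} (the signal dynamics cancel inside the innovation), which yields the time-uniform $L^2$ bound (\ref{X-p-q-X}); the variance mismatch is then written as $\sum_{l}E_{l,n}(p_1)\,\rho_l(p_1,p_2)\,U_l(x,p_2)$ with $U_l$ uniformly bounded in $L^2$ and $\rho_l\lesssim(1-\epsilon_1)^l\,\vert p_1-p_2\vert$, so the proof only convolves two subgeometric rates and no comparison with $\vert A\vert$ is ever needed. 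You instead keep the observation-driven recursion, accept that $\EE(\beta_j^2)^{1/2}$ grows like $\vert A\vert^j$ — in fact like $(1+j)\vert A\vert^j$ once $\Vert Y_i\Vert_2\lesssim \vert A\vert^i$ is propagated through the filter recursion, a polynomial correction you should record, harmless here because your geometric ratio is strictly $\lambda_2<1$ — and you close the convolution through the exact scalar identity $\sqrt{1-\epsilon_1}\,\vert A\vert=A^2/\lambda_1=\lambda_2<1$. Both arguments are sound. What the paper's signal-centering buys is robustness: all noises are uniformly bounded, the uniform tracking bound comes out as a by-product, and the mechanism does not rely on explicit spectral bookkeeping ($A^2=\lambda_1\lambda_2$, $1+SP_\infty=\lambda_1$), which is what one would want for a multivariate extension. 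What your route buys is transparency: it isolates the single inequality that defeats the signal's instability, namely $\lambda_2<1$, i.e.\ the sign $-RS<0$ of the characteristic polynomial at $1$, the scalar expression of detectability.
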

The proof of the above theorem is provided in section~\ref{theo-ricc-Kalman-intro-proof}.

Let $\Pa(p, dq)$ denote the Markov transitions associated with the stochastic Riccati Markov chain $p_n$ discussed in (\ref{ref-eq-p-Markov}). Given some non-negative function $\Ua$ on $\RR_+ := [0, \infty[$ we define the $\Ua$-norm of some function $f: \RR_+ \to \RR$ and some locally finite signed measure $\mu(dp)$ on $\RR_+$  by
\begin{equation}\label{U-norm}
\Vert f\Vert_{\tiny \Ua}:=\sup_{p\geq 0}\left\vert \frac{f(p)}{\Ua(p)+1/2}\right\vert\quad \mbox{\rm and}\quad \Vert \mu\Vert_{\tiny \Ua}:=\sup{\{\vert \mu(f)\vert~:~f~\mbox{\rm s.t.}~\Vert f\Vert_{\tiny \Ua}\leq 1\}}.
\end{equation}
The $\Ua$-Dobrushin ergodic coefficient of $\Pa$ is defined by
\begin{equation}\label{U-Dob}
\beta_{\Ua}(\Pa) = \sup_{(p,q) \in \RR_+^2}\frac{\Vert \Pa(p, \cdot) - \Pa(q, \cdot) \Vert_{\Ua}}{1 + \Ua(p) + \Ua(q)}.
\end{equation}
The next theorem concerns the stability properties of the stochastic Riccati rational difference equation 
presented in corollary~\ref{cor-srre}.
\begin{theo}\label{stab-riccati-stoch}
There exists an unique invariant measure $\pi$ such that $\pi = \pi \Pa$ and some 
Lyapunov function of the form $\Ua(p)=u p+v$ such that $\beta_{\Ua}(\Pa) < 1$ for some $u,v>0$. In addition, for any probability measures $\mu_1$ and $\mu_2$ on $\RR_+$ and for any $n\geq 1$ we have
$$
\Vert \mu_1\Pa^n - \mu_2\Pa^n\Vert_{\Ua} \le \beta_{\Ua}(\Pa)^n\Vert \mu_1 - \mu_2\Vert_{\Ua}.
$$
Moreover, for any function $f$ such that $|f(p)| \le 1/2 + \Ua(p)$, for any $p \in \RR_+$, we have
$$
|\Pa^n(f)(p) - \pi(f)| \le \beta_{\Ua}(\Pa)^n~(1 + \Ua(p) + \pi(\Ua)).
$$
\end{theo}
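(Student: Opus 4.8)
The plan is to reduce all three assertions to the single estimate $\beta_{\Ua}(\Pa)<1$ for a suitable linear Lyapunov function $\Ua(p)=up+v$, and then to run an abstract Dobrushin-type contraction argument. The backbone is the one-step inequality
$$\Vert \mu_1\Pa-\mu_2\Pa\Vert_{\Ua}\leq \beta_{\Ua}(\Pa)\,\Vert \mu_1-\mu_2\Vert_{\Ua},$$
valid for \emph{any} probability measures $\mu_1,\mu_2$ of finite $\Ua$-moment. I would prove this directly from the definitions (\ref{U-norm})--(\ref{U-Dob}): writing the Hahn--Jordan decomposition $\mu_1-\mu_2=a(\nu_1-\nu_2)$ with $\nu_1,\nu_2$ probability measures and $a=\tfrac12|\mu_1-\mu_2|(\un)$, testing against $f$ with $\Vert f\Vert_{\Ua}\leq1$ and using $\Pa f(p)-\Pa f(q)\leq \beta_{\Ua}(\Pa)(1+\Ua(p)+\Ua(q))$ gives the bound; the point is that the constant $1/2$ in the $\Ua$-norm and the summand $1$ in (\ref{U-Dob}) match exactly, so that $a(1+\nu_1(\Ua)+\nu_2(\Ua))=|\mu_1-\mu_2|(\Ua)+a=\Vert\mu_1-\mu_2\Vert_{\Ua}$. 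Granting $\beta_{\Ua}(\Pa)<1$, the contraction in the statement follows by iterating this inequality (each $\mu_i\Pa^k$ remains a probability measure); existence and uniqueness of $\pi$ with $\pi(\Ua)<\infty$ follow from Banach's fixed point theorem on the complete space of probability measures with finite $\Ua$-moment, noting that $\Pa$ maps this space into itself since $\Pa\Ua$ is bounded (see below); and the final display follows from $\Pa^n(f)(p)-\pi(f)=(\delta_p-\pi)\Pa^n(f)$ together with $|\delta_p-\pi|(\Ua+1/2)\leq 1+\Ua(p)+\pi(\Ua)$.

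Everything thus rests on a drift and a minorization yielding $\beta_{\Ua}(\Pa)<1$. For the drift, Corollary~\ref{cor-srre} gives $p_{n+1}=\phi(p_n)+N^{-1/2}\delta_{n+1}$ with $\delta_{n+1}=A^2\widehat\nu_n+\nu_{n+1}$; since the increments $\widehat\Delta_n,\Delta_{n+1}$ are centred, $\EE(\delta_{n+1}\mid\Fa_n)=0$, hence $\Pa(\mathrm{id})(p)=\phi(p)$. Because $ad-bc=A^2>0$ and $a,c>0$, the Möbius map $\phi$ is increasing and bounded, $\phi(p)<a/c=:M$ for all $p\geq0$, so that $\Pa\Ua(p)=u\phi(p)+v\leq uM+v$ is bounded uniformly in $p$. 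This uniform-in-$p$ control — a manifestation of the mean-stability of the Riccati map coming from the saturation of the gain — is the structural feature that makes the problem tractable with a merely linear $\Ua$ (recall that the stationary law may have infinite higher moments, so a quadratic Lyapunov function is unavailable).

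For the minorization I would use the non-central $\chi$-square realisation of Corollary~\ref{cor-p-chi-evol}. The update step satisfies $\EE(\widehat p_n\mid p_n=p)=p/(1+Sp)\leq 1/S$ and $\widehat p_n\geq0$, so Markov's inequality gives $\PP(\widehat p_n\leq 2/S\mid p_n=p)\geq 1/2$ \emph{uniformly} in $p\geq0$. Conditionally on $\{\widehat p_n\leq 2/S\}$ the prediction step $p_{n+1}=(R/N)\chi^2_{N,N(A^2/R)\widehat p_n}$ has non-centrality parameter in a fixed compact interval $[0,\theta_{\max}]$, and the rescaled non-central $\chi$-square density is jointly continuous and strictly positive, hence bounded below by some $c>0$ on $[0,\theta_{\max}]\times[q_1,q_2]$ for any $0<q_1<q_2$. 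Integrating the two stages yields the \emph{global} Doeblin minorisation $\Pa(p,dq)\geq \epsilon\,\nu(dq)$ with $\nu=\mathrm{Unif}[q_1,q_2]$ and $\epsilon=\tfrac12 c\,(q_2-q_1)>0$, valid for all $p\geq0$. This uniform-in-$p$ density lower bound is the step I expect to be the main obstacle: one must verify the joint continuity and positivity of the composed transition density and check that the intermediate variable $\widehat p_n$ stays confined to a fixed compact set regardless of $p$ (which is exactly what the contraction $p\mapsto p/(1+Sp)$ guarantees, and what would fail for genuinely unstable analogues).

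Finally I would combine the two ingredients. Writing $\rho_p:=\Pa(p,\cdot)-\epsilon\nu\geq0$, one has $\Pa(p,\cdot)-\Pa(q,\cdot)=\rho_p-\rho_q$, hence $\Vert \Pa(p,\cdot)-\Pa(q,\cdot)\Vert_{\Ua}\leq \Pa\Ua(p)+\Pa\Ua(q)+1-2\epsilon\,\nu(\Ua+1/2)$. Bounding $\Pa\Ua\leq uM+v$ and $\nu(\Ua+1/2)= u\bar q+v+1/2$ with $\bar q=\nu(\mathrm{id})$, and using $1+\Ua(p)+\Ua(q)\geq 1+2v$ in the denominator of (\ref{U-Dob}), the ratio defining $\beta_{\Ua}(\Pa)$ is at most $\big(2u(M-\epsilon\bar q)+2(1-\epsilon)v+(1-\epsilon)\big)/(1+2v)$; since the coefficient of $v$ in the numerator is $2(1-\epsilon)<2$, this quantity is strictly less than $1$ as soon as $v>u(M-\epsilon\bar q)/\epsilon-1/2$. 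Choosing $u=1$ and any such $v>0$ gives $\beta_{\Ua}(\Pa)<1$, and the first paragraph then delivers the invariant measure, the geometric contraction, and the bias estimate.
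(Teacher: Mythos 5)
Your proposal is correct, but it reaches $\beta_{\Ua}(\Pa)<1$ by a genuinely different route than the paper. The paper's appendix proof establishes only a \emph{local} minorisation: using that $\Pa(p,dq)=k(p,q)\,dq$ has a continuous positive density, it gets $\Pa(p,dq)\geq \epsilon_{\varpi}\nu_{\varpi}(dq)$ for $p$ in a compact set $\varpi_r=[0,r]$, hence a local total-variation contraction $\beta_r(\Pa)<1$, and then couples this with the drift $\Pa(\Ua_{\epsilon_2})\leq 1+\epsilon_2\Ua_{\epsilon_2}$ (the same boundedness $\Pa(\mathrm{id})=\phi\leq A^2/S+R$ you use) through the standard Harris/Lyapunov--Dobrushin two-regime argument --- splitting $(p,q)$ according to whether both lie in $[0,r]$ or not, and tuning $(r,\epsilon_1,\epsilon_2)$ --- deferring the off-compact regime to Section 8.2.7 of the cited book. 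You instead prove a \emph{global} Doeblin minorisation, uniform in $p\geq 0$, by exploiting the two-step $\chi^2$ structure of Corollary~\ref{cor-p-chi-evol}: gain saturation gives $\EE(\widehat p_n\,|\,p_n)=p_n/(1+Sp_n)\leq 1/S$, Markov's inequality confines $\widehat p_n$ to a fixed compact with probability at least $1/2$ regardless of $p$, and the non-central $\chi^2$ density with non-centrality in a compact is bounded below on $[q_1,q_2]$ (here one can bypass joint continuity via the Poisson-mixture bound $f(x;N,\lambda)\geq e^{-\lambda/2}f_N(x)$); the estimate $\beta_{\Ua}(\Pa)<1$ then drops out of a two-line computation. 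Your route buys a self-contained, elementary proof and in fact the stronger conclusion that the chain is uniformly ergodic in total variation, at the price of leaning on model-specific structure; the paper's route is more generic (it needs only a positive continuous density plus drift) and would survive in settings where no global minorisation is available. Your first paragraph, deriving the iteration, the fixed point and the bias bound from $\beta_{\Ua}(\Pa)<1$ via $\Vert\mu\Vert_{\Ua}=\vert\mu\vert(\Ua+1/2)$ and Hahn--Jordan, correctly supplies what the paper treats as standard. One small point to add: Banach's theorem gives uniqueness of $\pi$ only among probability measures with finite $\Ua$-moment; to get unqualified uniqueness, note that any invariant measure $\pi'$ automatically satisfies $\pi'(\Ua)=\pi'(\Pa\Ua)\leq u(A^2/S+R)+v<\infty$ because $\Pa\Ua$ is bounded.
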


The above theorem comes from the fact that 
the Riccati map $\phi(p)$ is a uniformly bounded function (cf. for instance (\ref{ul-bounds}) and (\ref{ricc-lower-cv})). In addition, using (\ref{ref-eq-p-Markov}) or (\ref{wp-p-chi}) we readily check that $\Pa(p,dq)$ has a continuous density $k(p,q)>0$ with respect to the variables $(p,q)$ and the Lebesgue measure $dq$ on $\RR_+$.
It is rather well-known that these two properties ensure the existence of a Lyapunov function of the form $\Ua(u,v)=u p+v$ such that $\beta_{\Ua}(\Pa) < 1$. For the convenience of the reader a detailed proof of the above theorem is provided in the appendix.  The above theorem is an extension of the stability theorem for stochastic Riccati diffusions presented in the article~\cite{bishop-19} (see also section 2.2 and Theorem 2.4 in~\cite{BishopDelMoralMatricRicc} for the multivariate case) to the stochastic Riccati rational equations presented in Corollary~\ref{ref-eq-p-Markov}.

We now turn to quantifying the stability properties of the EnKF sample means.
  Since the one-step optimal predictor $\widehat{X}_n^-$ is a minimal variance estimate of $X_n$ we already know that
  $$
  M_n:=(m_n-X_n)\Longrightarrow
P_n=  \EE((\widehat{X}_n^--X_n)^2)\leq   \EE(M_n^2).
  $$ 
Thus for small sample sizes we expect $\EE((m_n-X_n)^2)$ to be much larger than $P_n$. To find some useful quantitative estimate, 
observe that
\begin{equation}\label{ref-key-obs}
\displaystyle M_{n+1}=\displaystyle \frac{A}{1+Sp_n}~M_n+\Upsilon_{n+1},
\end{equation}
with the conditionally centered Gaussian random variables
$$
\Upsilon_{n+1}:=Ag_nDV_n-BW_{n+1}+\frac{1}{\sqrt{N+1}}~\left(A~\widehat{\upsilon}_n+\upsilon_{n+1}\right).
$$
The decomposition (\ref{ref-key-obs}) shows that the global stability of the stochastic process $M_n$
depends on the long-time behavior of the random products defined for any $0\leq l\leq n$ by the formula
$$
\Ea_{l,n}:=\prod_{l\leq k\leq n}\frac{A}{1+Sp_k}.
$$
For stable signal drifts, i.e. when
$\vert A\vert <1$, the exponential decays of these random products is immediate. In the unstable case, that is when $\vert A\vert>1$,
none of the uniform estimates stated later in Theorem~\ref{theo-u-p} appear to be useful to quantify directly these random products.

\begin{theo}\label{key-theo}
For any $k\geq 1$ there exist some parameters $N_k\geq 1$ and $\epsilon_k\in ]0,1]$ such that for any $N\geq N_k$ and any time horizon $n\geq l\geq 1$ we have the uniform almost sure exponential decays and the time-uniform estimates 
\begin{equation}\label{expo-decay-p}
\EE\left(\left\vert \Ea_{l,n}\right\vert^k ~|~p_{l-1}\right)\leq  \iota_k~\left(1-\epsilon_k\right)^{n-l}\quad \mbox{and}\quad
\EE\left(\vert M_{n}\vert^k\right)\leq \iota_k.
\end{equation}
\end{theo}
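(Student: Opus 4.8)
The plan is to first establish the exponential decay of the conditional moments of the random products $\Ea_{l,n}$, and then to deduce the uniform moment bound on $M_n$ from the affine recursion (\ref{ref-key-obs}). For the first and harder task, I would exploit the fact, guaranteed by Theorem~\ref{stoch-perturbation-theo}, that the sample variances $p_n$ evolve as the autonomous Markov chain with transition $\Pa$, independently of the observations and of the mean-perturbations $(\upsilon,\widehat\upsilon)$. Writing $G(p):=(|A|/(1+Sp))^k$, the tower property and the Markov property yield the Feynman-Kac representation
\begin{equation*}
\EE\left(\left\vert\Ea_{l,n}\right\vert^k~|~p_{l-1}=p\right)=(Q^{\,n-l+1}\un)(p),\qquad (Qf)(p):=\int G(q)\,f(q)\,\Pa(p,dq),
\end{equation*}
so that the claimed estimate amounts to the exponential decay of the Feynman-Kac semigroup $Q^m$ applied to the unit function, uniformly in the starting point $p$.

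The crucial structural input is the deterministic stability of the Kalman filter: solving $P_{\infty}=\phi(P_{\infty})$ one checks that $1+SP_{\infty}-|A|>0$, with a gap depending only on $(A,B,C,D)$, so that $G(P_{\infty})=(1-\delta_0)^k<1$ for some $\delta_0>0$. Since $\phi$ is increasing with $\phi(\RR_+)\subseteq[R,\,A^2/S+R]$, a single step sends any initial value into a fixed bounded interval, and the contraction estimate (\ref{ricc-lower-cv-2}) then forces $\phi^{\,j}(p)\to P_{\infty}$ uniformly in $p\geq 0$. Fixing $\delta>0$ small so that $\gamma:=G(P_{\infty}-\delta)<1$ and a burn-in $j_1$ with $\phi^{\,j}(p)\geq P_{\infty}-\delta/2$ for all $j\geq j_1$ and all $p\geq 0$, I would prove a one-window estimate: for a suitably large but fixed horizon $T$ and all $N\geq N_k$,
\begin{equation*}
\sup_{p\geq 0}\,(Q^{T}\un)(p)=\sup_{p\geq 0}\EE\left(\prod_{j=1}^{T}G(p_j)~\Big|~p_0=p\right)\leq\theta<1.
\end{equation*}
To obtain this I split according to whether $p_j$ stays in the good region $[P_{\infty}-\delta,\infty)$, on which $G\leq\gamma$, for all $j_1\leq j\leq T$. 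On this event the product is at most $|A|^{kj_1}\gamma^{\,T-j_1}$, which is made $\leq 1/4$ by taking $T$ large; on the complementary event the product is at most $|A|^{kT}$, and its probability is bounded, via a conditional variance estimate for the stochastic Riccati chain that is uniform over starting points and times (coming from the $O(1/\sqrt N)$ perturbation in (\ref{ref-eq-p-Markov}) together with $\EE(p_{j+1}\mid p_j)=\phi(p_j)$) and Chebyshev's inequality, by $T\,\epsilon(N)$ with $\epsilon(N)\to 0$; with $T$ now fixed this contributes $\leq 1/4$ once $N\geq N_k$.

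Given the window estimate, the semigroup property gives $(Q^{mT}\un)(p)\leq\theta^{m}$ for every $m$, and writing a general exponent $m'$ as $mT+r$ with $0\leq r<T$ and bounding the $r$ extra factors by $G\leq|A|^k$ yields $\sup_{p}(Q^{m'}\un)(p)\leq\iota_k(1-\epsilon_k)^{m'}$ with $1-\epsilon_k:=\theta^{1/T}$; taking $m'=n-l+1$ proves the first estimate in (\ref{expo-decay-p}). For the second estimate I would iterate (\ref{ref-key-obs}) to get $M_n=\Ea_{0,n-1}M_0+\sum_{j=1}^{n}\Ea_{j,n-1}\Upsilon_j$, apply Minkowski's inequality, and in each term condition on the whole Riccati path: given that path, $\Ea_{j,n-1}$ is frozen while $\Upsilon_j$ is centered Gaussian with a conditional variance that is uniformly bounded, since the gains $g_n$ and the mean-perturbations $\upsilon,\widehat\upsilon$ all have bounded conditional variance (no $\sqrt{p}$ factors enter $\Upsilon$). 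Hence $\EE(|\Ea_{j,n-1}\Upsilon_j|^k)\leq\iota_k\,\EE(|\Ea_{j,n-1}|^k)\leq\iota_k(1-\epsilon_k)^{\,n-1-j}$ by the first estimate, and summing the resulting convergent geometric series yields $\EE(|M_n|^k)\leq\iota_k$ uniformly in $n$. The main obstacle is precisely the window estimate in the genuinely unstable regime $|A|>1$, where $G$ exceeds $1$ exactly on the region of small variances: one must show that the rare excursions of the stochastic Riccati chain into this region cannot accumulate enough multiplicative weight to offset the strict contraction $\gamma<1$ enjoyed in the bulk near $P_{\infty}$. This is where a large sample size is indispensable, and the argument relies on quantitative, starting-point-uniform concentration of $p_j$ around the deterministic climb $\phi^{\,j}(p)$ toward $P_{\infty}$; the uniformity in the possibly large initial value $p_{l-1}$ is recovered from the boundedness of the range of $\phi$ after one step.
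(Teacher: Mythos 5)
Your proposal is correct, and for the hard half of the theorem it takes a genuinely different route from the paper. The paper first proves an abstract Feynman--Kac change-of-measure result (Theorem~\ref{ref-theo-FK}) and applies it to the Riccati chain with the weight $h(p)=p^k$, so that the whole problem collapses to a \emph{one-step} bound: $H^h(x)=\big(\tfrac{|A|x}{(1+Sx)\phi(x)}\big)^k\,\EE\big((\phi(p_0)/p_1)^k\,|\,p_0=x\big)\leq \big(\tfrac{|A|}{A^2+RS}\big)^k\big(1+\tfrac{\iota_k}{N}\big)<1$ for $N\geq N_k$. The essential technical input there is the inverse-moment estimate (\ref{ratio-k}) for the ratios $\phi(p_n)/p_{n+1}$, which rests on the non-central $\chi$-square inverse-moment analysis of Proposition~\ref{prop-raw-moments-chi}. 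You instead keep the plain Feynman--Kac semigroup $Q$ and prove a contraction over a fixed window $T$, combining the deterministic facts $1+SP_\infty>|A|$ (the paper's (\ref{epsilon-infty})) and the starting-point-uniform convergence $\phi^j(p)\to P_\infty$ with Chebyshev-type concentration of $p_j$ around $\phi^j(p)$; the latter follows from $\EE(p_{j+1}\,|\,p_j)=\phi(p_j)$, the $O(1/\sqrt N)$ perturbation in (\ref{ref-eq-p-Markov}) and the telescoping/contraction argument of section~\ref{mean-error-P-sec}. Your route avoids the $\chi$-square inverse-moment machinery entirely and handles $|A|<1$, $|A|=1$, $|A|>1$ uniformly, at the price of a cruder window-based contraction with implicit constants $(\epsilon_k,N_k)$; the paper's route is heavier but yields a clean per-step contraction and explicit rates. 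Two points you correctly flag do need care and do go through: the conditional moments of $\delta_k$ must be bounded uniformly in the starting value $p$ (this holds because the updating step contracts large variances, $\widehat p_j\leq 1/S+|\widehat\nu_j|/\sqrt N$, exactly the mechanism behind the paper's (\ref{eq:est-nuhat})), and the window bound must be uniform in $p$, which your one-step burn-in through the bounded range of $\phi$ delivers. Your treatment of the second estimate --- iterating (\ref{ref-key-obs}), conditioning on the whole Riccati path, using that $\Upsilon_j$ is conditionally centered Gaussian with uniformly bounded variance thanks to the independence structure of Theorem~\ref{stoch-perturbation-theo}, and summing a geometric series via the first estimate in (\ref{expo-decay-p}) --- is essentially the paper's own argument for (\ref{estimates-m-X-x-k}), just spelled out in more detail than the paper bothers to.
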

The proof of this theorem is provided at the end of section~\ref{change-probab-sec}.

\bigskip

Let $\Ma((p,x), d(q,z))$ denote the Markov transitions associated with the coupled 
Markov chain $(p_n,M_n)$ defined by  formulae (\ref{ref-eq-p-Markov}) and (\ref{ref-key-obs}). The $\Va$-norms and the $\Va$-Dobrushin ergodic coefficient  $\beta_{\Va}(\Ma)$ of $\Ma$ associated with some non-negative function $\Va$ on $\RR_+\times\RR$ are defined as in (\ref{U-norm}) and (\ref{U-Dob}) by replacing the state space $\RR_+$ by $\RR_+\times\RR$ and the function $\Ua$ by $\Va$. For any $\epsilon\in ]0,1]$ there exists some time horizon $k\geq 1$ such that
 \begin{equation}\label{Lyap-form}
 \Va(p,x)= p+\vert x\vert\Longrightarrow
\Ma^k(\Va)(p,x)\leq \epsilon~ \Va(p,x)+\iota.
\end{equation}
A proof of the above Lyapunov inequality is provided in section~\ref{theo-enkf-wX-proof}.
 Arguing as in the proof of Theorem~\ref{stab-riccati-stoch} we readily prove the following theorem.
 
\begin{theo}\label{stab-riccati+M-stoch}
There exists a unique invariant measure $\mu$ such that $\mu= \mu \Ma$,
a Lyapunov function of the form $\Va(p,x)=u p+v \vert x\vert+w$ for some $u,v,w>0$, and some integer $k\geq 1$ such that $\beta_{\Va}(\Ma^k) < 1$.
In addition,  for any probability measures $\eta_1$ and $\eta_2$ on $\RR_+\times \RR$ and any $n\geq 1$ we have
$$
\Vert \eta_1\Ma^{nk} - \eta_2\Ma^{nk}\Vert_{\Va} \le \beta_{\Va}(\Ma^k)^n\Vert \eta_1 - \eta_2\Vert_{\Va}.
$$
Moreover, for any function $f$ on $\RR_+\times \RR$ such that $|f(p,x)| \le 1/2 + \Va(p,x)$, for any $(p,x) \in \RR_+\times\RR$, we have
$$
|\Ma^{nk}(f)(p,x) - \mu(f)| \le \beta_{\Va}(\Ma^k)^n~(1 + \Va(p,x) + \mu(\Va)).
$$
\end{theo}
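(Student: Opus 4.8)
The plan is to follow the same Meyn--Tweedie/Harris route used for Theorem~\ref{stab-riccati-stoch}, now applied to the coupled $k$-step kernel $\Ma^k$ on the product space $\RR_+\times\RR$. Geometric ergodicity in the weighted norm $\Vert\cdot\Vert_{\Va}$ follows as soon as one combines a Lyapunov drift condition for $\Ma^k$ with a local minorization (Doeblin) condition on the sublevel sets of $\Va$. The drift condition is already in hand: fixing $\epsilon\in{]0,1[}$ in (\ref{Lyap-form}) produces an integer $k\geq 1$ and a constant $\iota$ with $\Ma^k(\Va)(p,x)\leq \epsilon\,\Va(p,x)+\iota$ for $\Va(p,x)=p+\vert x\vert$; a rescaling and an additive shift then give the stated form $\Va(p,x)=up+v\vert x\vert+w$ with $u,v,w>0$ chosen to tune the Harris contraction, while keeping the sublevel sets $\{\Va\leq L\}$ compact. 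The only genuinely new ingredient, relative to the scalar chain $p_n$ treated in Theorem~\ref{stab-riccati-stoch}, is the minorization for the coupled transition.

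First I would establish the minorization. By Theorem~\ref{stoch-perturbation-theo}, conditionally on $p_n$ the noise $\delta_{n+1}=A^2\widehat{\nu}_n+\nu_{n+1}$ driving the Riccati step (\ref{ref-eq-p-Markov}) is independent of the Gaussian perturbation $\Upsilon_{n+1}$ driving the mean step (\ref{ref-key-obs}), since the local perturbations of the sample mean and of the sample variance are independent. Consequently the one-step kernel $\Ma((p,x),\cdot)$ factorises, conditionally on $p$, as the product of the law of $p_{n+1}$ given $p_n=p$ and the law of $M_{n+1}$ given $(p_n,M_n)=(p,x)$. The first factor has a continuous, strictly positive density, exactly as recorded for $\Pa$ in the discussion following Theorem~\ref{stab-riccati-stoch} (using (\ref{ref-eq-p-Markov}) or the non-central $\chi$-square representation (\ref{wp-p-chi})). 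For the second factor, $\Upsilon_{n+1}$ is, conditionally on $p_n=p$, a centered Gaussian variable whose variance is a continuous, strictly positive function of $p$ through $g_n$; hence $M_{n+1}$ has a Gaussian conditional density that is continuous and positive in $(p,x,z)$. Multiplying, $\Ma$ admits a jointly continuous, strictly positive density, and the same holds for $\Ma^k$. On any compact sublevel set $\{\Va\leq L\}$ this density is bounded below, which yields a probability measure $\varrho$ and a constant $\alpha>0$ with $\Ma^k((p,x),\cdot)\geq \alpha\,\varrho(\cdot)$ for all $(p,x)$ in the sublevel set.

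With the drift and minorization conditions in place for $\Ma^k$, the remainder is verbatim the argument of Theorem~\ref{stab-riccati-stoch}: choosing $L$ large enough relative to $\iota/(1-\epsilon)$, the Harris theorem gives $\beta_{\Va}(\Ma^k)<1$, from which the contraction $\Vert\eta_1\Ma^{nk}-\eta_2\Ma^{nk}\Vert_{\Va}\leq \beta_{\Va}(\Ma^k)^n\Vert\eta_1-\eta_2\Vert_{\Va}$ follows by submultiplicativity of the Dobrushin coefficient along the subsampled chain. Existence and uniqueness of $\mu=\mu\Ma$ follow from completeness of the $\Va$-weighted space together with this contraction, the full chain sharing the fixed point of its $k$-step skeleton by the usual uniqueness argument; and the final bound $\vert\Ma^{nk}(f)(p,x)-\mu(f)\vert\leq \beta_{\Va}(\Ma^k)^n(1+\Va(p,x)+\mu(\Va))$ is obtained by testing the contraction against $\eta_1=\delta_{(p,x)}$ and $\eta_2=\mu$, using $\Vert\delta_{(p,x)}-\mu\Vert_{\Va}\leq 1+\Va(p,x)+\mu(\Va)$ for any $f$ with $\Vert f\Vert_{\Va}\leq 1$.

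The step I expect to require the most care is the minorization, specifically verifying that the coupled transition density is jointly continuous and strictly positive and that its lower bound on $\{\Va\leq L\}$ is uniform. The key enabler is the conditional independence of the mean- and variance-driving perturbations from Theorem~\ref{stoch-perturbation-theo}, which decouples the joint density into two tractable one-dimensional factors; without this structure the dependence of the $M$-component on $p_n$ through the random coefficient $A/(1+Sp_n)$ and through $\sqrt{\widehat{p}_n}$ in $\nu_{n+1}$ would make a direct density estimate considerably more delicate.
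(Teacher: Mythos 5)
Your proposal is correct and follows essentially the same route as the paper: the Lyapunov drift \eqref{Lyap-form} (which the paper, like you, treats as an input proved separately in section~\ref{theo-enkf-wX-proof}) is combined with a local minorization obtained from the continuity and strict positivity of the transition density, and then the drift-plus-Doeblin contraction argument already used for Theorem~\ref{stab-riccati-stoch} is repeated on $\RR_+\times\RR$ for the $k$-step kernel $\Ma^k$. Your explicit verification of the minorization for the coupled kernel --- the conditional factorisation, given $p_n$, into the Riccati density and the Gaussian law of $M_{n+1}$ given $(p_n,M_n)$, enabled by the independence of the mean and variance perturbations in Theorem~\ref{stoch-perturbation-theo} --- is precisely the step the paper leaves implicit in the phrase ``arguing as in the proof of Theorem~\ref{stab-riccati-stoch}''.
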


\subsection{Uniform mean-error estimates}\label{sec-unif-intro}

The next theorem provides uniform mean-error and bias estimates for  the particle gain defined in (\ref{kalman-EnKF-def}) and the stochastic Riccati equations
presented in Theorem~\ref{stoch-perturbation-theo}.
\begin{theo}\label{theo-u-p}
  For any $k\geq 1$ and any $N\geq 1$ and $n\geq 0$ we have the   time uniform estimates
\begin{equation}
\EE\left(\vert p_n-P_n\vert^k\right)^{1/k}\vee \EE\left(\vert \widehat{p}_n-\widehat{P}_n\vert^k\right)^{1/k}\vee \EE\left(\vert g_n-G_n\vert^k\right)^{1/k} \leq \iota_k~(1\vee P_0) /\sqrt{N}.
\label{time-unif-bound}
\end{equation}
In addition, we have
the uniform bias estimate
\begin{equation}\label{bias-estimates}
0\leq P_n-\EE(p_n)\leq \frac{\iota_1}{N}~\left[1\vee P_0\right]^{2}\quad\mbox{and}\quad
0\leq G_n-\EE(g_n)\leq  \frac{\iota_2}{N}~\left[1\vee P_0\right]^{2}.
\end{equation}
The same formula on the l.h.s. of the above display holds by replacing $(p_n,P_n)$ by $(\widehat{p}_n,\widehat{P}_n)$ .
\end{theo}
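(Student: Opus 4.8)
The plan is to read the sample-variance recursion of Corollary~\ref{cor-srre}, $p_{n+1}=\phi(p_n)+N^{-1/2}\delta_{n+1}$, as a stochastic perturbation of the deterministic Riccati flow $P_{n+1}=\phi(P_n)=\phi^n(P_0)$, and to turn the multi-step contraction of Theorem~\ref{theo-ricc-Kalman-intro} into time-uniform $L^k$ and bias estimates by telescoping. First I would establish time- and $N$-uniform moment bounds on the local fluctuations \eqref{def-loc-fluctuation}. The key observation is that every coefficient occurring there is a \emph{bounded} rational function of $p_n$ or $\widehat p_n$: one checks directly that $\phi(p)<a/c$ on $\RR_+$, that $(1-g_nC)p_n=p_n/(1+Sp_n)<1/S$, that $g_nD(1-g_nC)\sqrt{p_n}=CD^3p_n^{3/2}/(C^2p_n+D^2)^2$ is bounded on $\RR_+$, and that $g_n^2D^2\le D^2/C^2$. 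Combined with the uniform-in-$N$ moments of the vectors $\Delta_n,\widehat\Delta_n$ (with $\widehat Z,\widecheck\Za$ exactly Gaussian and $\widetilde\Za$ a centred normalised $\chi$-square sum controlled by a Marcinkiewicz--Zygmund/Rosenthal inequality), this gives $\sup_n\EE(\widehat p_n^{\,k})\le \iota_k$ and, via H\"older, $\sup_n\EE(|\delta_{n+1}|^k)\le\iota_k$, uniformly in $N$ and free of $P_0$ for $n\ge1$, while $\EE(p_0^k)^{1/k}\le\iota_k(1\vee P_0)$ and $\EE(|p_0-P_0|^k)^{1/k}\le\iota_k P_0/\sqrt N$ from \eqref{def-loc-fluctuation} and the initial condition.

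For \eqref{time-unif-bound} I would use the telescoping identity $p_n-\phi^n(p_0)=\sum_{j=1}^n[\phi^{\,n-j}(p_j)-\phi^{\,n-j}(\phi(p_{j-1}))]$ and apply the deterministic contraction \eqref{ricc-lower-cv-2} to each summand, noting $p_j-\phi(p_{j-1})=N^{-1/2}\delta_j$. This yields the pathwise bound $|p_n-P_n|\le \iota_1N^{-1/2}\sum_{j=1}^n(1-\epsilon_1)^{n-j}|\delta_j|+\iota_1(1-\epsilon_1)^n|p_0-P_0|$; taking $L^k$-norms by Minkowski, inserting the uniform bound on $\EE(|\delta_j|^k)$, and summing the geometric series gives the $\iota_k(1\vee P_0)/\sqrt N$ bound for $p_n$. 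The $\widehat p_n$ and $g_n$ estimates then follow because $\widehat p_n-\widehat P_n=\psi(p_n)-\psi(P_n)+N^{-1/2}\widehat\nu_n$ and $g_n-G_n=\gamma(p_n)-\gamma(P_n)$, where $\psi(p)=p/(1+Sp)$ and $\gamma(p)=Cp/(C^2p+D^2)$ are globally Lipschitz on $\RR_+$.

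For the bias \eqref{bias-estimates} I would first note that the fluctuation vectors are centred and independent of the relevant past, so $\EE(\delta_{n+1})=0$ and hence $\EE(p_{n+1})=\EE(\phi(p_n))$. Since $\phi$ is increasing and concave on $\RR_+$ (indeed $\phi''=-2SA^2/(1+Sp)^3<0$), Jensen's inequality and induction give $\EE(p_n)\le\phi(\EE p_{n-1})\le\phi(P_{n-1})=P_n$, which is the nonnegativity of the bias. For the magnitude, set $\bar p_n:=\EE(p_n)$, so that $\bar p_{n+1}=\phi(\bar p_n)-J_n$ with Jensen gap $0\le J_n=-\tfrac12\EE[\phi''(\theta_n)(p_n-\bar p_n)^2]\le SA^2\,\mathrm{Var}(p_n)$; the bound $\mathrm{Var}(p_n)\le\EE(p_n-P_n)^2\le\iota_2^2(1\vee P_0)^2/N$ from the previous step forces $J_n\le\iota(1\vee P_0)^2/N$. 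Telescoping the deterministic recursion for $\bar p_n$ exactly as above and invoking \eqref{ricc-lower-cv-2} gives $0\le P_n-\EE(p_n)\le\iota_1\sum_j(1-\epsilon_1)^{n-j}J_{j-1}\le\iota_1(1\vee P_0)^2/N$. The $\widehat p_n$-bias follows from the one-step identity $\EE\widehat p_n=\EE\psi(p_n)$ together with the concavity and monotonicity of $\psi$ and the $O(1/N)$ variance bound, and the gain bias from the analogous analysis of $\gamma$, which is concave and increasing once $C>0$ is normalised (legitimate up to the relabelling $Y\mapsto -Y$).

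The hard part is that $\phi$ is \emph{not} a single-step contraction when the signal is unstable, since $\phi'(0)=A^2>1$ for $|A|>1$; only the semigroup $\phi^n$ contracts. The whole argument hinges on expressing both the $L^k$ error and the bias as sums of one-step perturbations, each propagated forward by the contractive multi-step map $\phi^{\,n-j}$, so that time-uniformity comes solely from summing a convergent geometric series rather than from any per-step decay. A secondary point requiring care is verifying that the rational coefficients of the local fluctuations are uniformly bounded over all $p\ge0$: this is what keeps the moment recursion from being circular and removes, for $n\ge1$, any hidden dependence on the possibly large initial variance $P_0$.
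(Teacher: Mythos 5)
Your proof of the $L^k$ estimates \eqref{time-unif-bound} is essentially the paper's own argument: the same telescoping decomposition of $p_n-P_n$ along the semigroup, the same use of the contraction \eqref{ricc-lower-cv-2} on each one-step perturbation $N^{-1/2}\delta_j$, the same uniform moment bounds on the local fluctuations deduced from the boundedness of the rational coefficients in \eqref{def-loc-fluctuation}, and the same reductions for $\widehat{p}_n$ and $g_n$ (the paper writes these as explicit algebraic identities rather than invoking Lipschitz continuity of $\psi$ and $\gamma$, but it is the same computation). Where you genuinely depart from the paper is the bias bound \eqref{bias-estimates}. The paper proves nonnegativity via concavity of the update map $p\mapsto p/(1+Sp)$ plus a Riccati comparison argument (essentially your Jensen-plus-induction step), and obtains the $O(1/N)$ magnitude by taking expectations in the second-order Taylor expansion of the semigroup $\phi^{n-k}$ developed for the CLT of Theorem~\ref{theo-clt-p-intro}: the conditionally centred first-order terms drop out, the second-order terms contribute $\EE(\delta_k^2)/N$, and the substitution $\partial\phi^{n-k}(\phi(p_{k-1}))\rightarrow\partial\phi^{n-k}(\phi(P_{k-1}))$ costs a cross term $N^{-1/2}\,\EE\left(\vert\delta_k\vert\,\vert p_{k-1}-P_{k-1}\vert\right)$ handled by Cauchy-Schwarz together with \eqref{time-unif-bound}; this route requires the Lipschitz estimate \eqref{Lip-2-ricc} for $\partial\phi^n$. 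You instead work at the level of the means from the outset: since $\EE(\delta_{n+1})=0$, the sequence $\overline{p}_n:=\EE(p_n)$ satisfies the exact deterministic recursion $\overline{p}_{n+1}=\phi(\overline{p}_n)-J_n$ with Jensen gap $0\leq J_n\leq SA^2\,\EE\left((p_n-P_n)^2\right)=O((1\vee P_0)^2/N)$, and the same telescoping-plus-contraction scheme already used for \eqref{time-unif-bound} converts this into the bias bound. This is more elementary (no derivative semigroup, no Lipschitz bound on $\partial\phi^n$, no entanglement with the CLT machinery) and it packages nonnegativity and magnitude into a single recursion; what the paper's route buys instead is that the expansion is shared with, and reused in, the proof of Theorem~\ref{theo-clt-p-intro}. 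One caveat you handle that the paper silently assumes: the sign claim $0\leq G_n-\EE(g_n)$ holds only under the normalisation $C>0$, since $\gamma(p)=Cp/(C^2p+D^2)$ is increasing and concave only in that case; your observation that this may be arranged by the relabelling $Y\mapsto -Y$ is correct and fills that small gap.
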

The proof of the uniform mean error estimates stated in the above theorem is provided in section~\ref{mean-error-P-sec}. The proof of the bias estimates is provided in the end of section~\ref{theo-loc-tcl-proof}.

Now we turn to quantifying the fluctuations of the sample means around the optimal filter.
We  set
$$
\widehat{m}_{n}^o:=
\EE(\widehat{m}_{n}~|~\Ya_{n})\quad \mbox{\rm and}\quad m_{n+1}^o:= \EE(m_{n+1}~|~\Ya_{n}).
$$
The next theorem provides uniform mean-error and bias estimates for  the sample means of the EnKF filter defined in (\ref{sm-kalman-EnKF}).
\begin{theo}\label{theo-enkf-wX}
For any $k\geq 1$ there exists some  parameter $N_k\geq 1$ such that for any $N\geq N_k$  and $n\geq 0$ we have the time-uniform estimates
\begin{equation}\label{estimates-m-wX-k}
\EE\left(\vert m_{n}-\widehat{X}^-_{n}\vert^k\right)^{1/k}\vee \EE\left(\vert \widehat{m}_{n}-\widehat{X}_{n}\vert^k\right)^{1/k}\leq {\iota_k}/{\sqrt{N}}.
\end{equation}
In addition, we have  the time-uniform bias estimates
\begin{equation}\label{estimates-bias-m-wX-k}
\EE\left(\vert m_{n}^o-\widehat{X}^-_{n}\vert^k\right)^{1/k}\vee \EE\left(\vert \widehat{m}^o_{n}-\widehat{X}_{n}\vert^k\right)^{1/k}\leq \iota_k~(1\vee P_0)^2/N.
\end{equation}
\end{theo}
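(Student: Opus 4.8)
The plan is to subtract the Kalman recursion from the stochastic perturbation equations of Theorem~\ref{stoch-perturbation-theo}, obtaining an affine recursion for the state error whose multiplicative coefficients are exactly the random products $\Ea_{l,n}$ controlled in Theorem~\ref{key-theo}, and then to unroll and bound the resulting sums. Set $E_n := m_n-\widehat{X}^-_n$ and let $I_n := Y_n-C\widehat{X}^-_n$ be the optimal innovation. Using $1-g_nC=1/(1+Sp_n)$ and $1-G_nC=1/(1+SP_n)$, subtracting (\ref{kalman-rec}) from the equations of Theorem~\ref{stoch-perturbation-theo} gives the updating identity $\widehat{m}_n-\widehat{X}_n=(1-g_nC)E_n+(g_n-G_n)I_n+\widehat{\upsilon}_n/\sqrt{N+1}$ and, after the prediction step,
\[
E_{n+1}=\frac{A}{1+Sp_n}\,E_n+F_{n+1},\qquad F_{n+1}:=A(g_n-G_n)I_n+\frac{1}{\sqrt{N+1}}\left(A\widehat{\upsilon}_n+\upsilon_{n+1}\right),
\]
with $E_0=\upsilon_0/\sqrt{N+1}$ by (\ref{def-initial-condition}). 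Iterating and recognising the products of Theorem~\ref{key-theo} yields $E_n=\Ea_{0,n-1}E_0+\sum_{1\le l\le n}\Ea_{l,n-1}F_l$.

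For the first estimate I would apply Minkowski's inequality to this sum, then Cauchy--Schwarz to each summand, and bound $\EE(|\Ea_{l,n-1}|^{2k})^{1/2k}\le\iota_k(1-\epsilon_k)^{n-l}$ via Theorem~\ref{key-theo} and $\EE(|g_l-G_l|^{2k})^{1/2k}\le\iota_k/\sqrt{N}$ via Theorem~\ref{theo-u-p}. The perturbations $\upsilon_{n+1},\widehat{\upsilon}_n$ have bounded moments (the particle gain satisfies $|g_n|\le 1/|C|$ and the vectors $\Delta_n,\widehat{\Delta}_n$ are standard), and the innovations $I_l$ have moments uniformly bounded in $l$ because $P_l$ is bounded by Theorem~\ref{theo-ricc-Kalman-intro}. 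Summing the geometric series in $n-l$ produces the claimed uniform $O(1/\sqrt{N})$ bound on $m_n-\widehat{X}^-_n$; the bound on $\widehat{m}_n-\widehat{X}_n$ then follows from the updating identity above together with Theorem~\ref{theo-u-p}.

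For the bias estimate I would pass to the quenched filter. Since the sample-variance sequence is autonomous and independent of the observations while the local perturbations are centered (Theorem~\ref{stoch-perturbation-theo}), taking $\EE(\cdot\mid\Ra_n)$ removes the $\upsilon$-terms and, using the quenched Kalman recursion stated after Corollary~\ref{cor-srre}, leaves $e_n:=m^r_n-\widehat{X}^-_n=\sum_{1\le l\le n}\Ea_{l,n-1}\,A(g_{l-1}-G_{l-1})\,I_{l-1}$ with $e_0=0$ (as $\upsilon_0$ is independent of $p_0$). Because $\{I_{l-1}\}$ is a martingale-difference sequence for $\Ya$ with uniformly bounded moments and is independent of the $p$-sequence, conditioning on $\Ya_{n-1}$ factorises each summand and gives $m^o_n-\widehat{X}^-_n=\sum_{1\le l\le n}A\,c_{l,n}\,I_{l-1}$ with deterministic weights $c_{l,n}:=\EE(\Ea_{l,n-1}(g_{l-1}-G_{l-1}))$. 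This is a weighted martingale-difference sum, so Burkholder--Davis--Gundy gives $\EE(|m^o_n-\widehat{X}^-_n|^k)^{1/k}\le\iota_k(\sum_{1\le l\le n}c_{l,n}^2)^{1/2}$.

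The crux is to show $|c_{l,n}|\le\iota\,(1\vee P_0)^2(1-\epsilon)^{n-l}/N$, which is what upgrades the rate from $1/\sqrt{N}$ to $1/N$. I split $c_{l,n}=\EE(\Ea_{l,n-1})\,\EE(g_{l-1}-G_{l-1})+\mathrm{Cov}(\Ea_{l,n-1},g_{l-1})$. The first term is bounded by the $O((1\vee P_0)^2/N)$ gain bias of Theorem~\ref{theo-u-p} times the contraction $|\EE\Ea_{l,n-1}|\le\iota(1-\epsilon)^{n-l}$ of Theorem~\ref{key-theo}. For the covariance, writing $g_{l-1}=\gamma(p_{l-1})$ and using the Markov property to replace $\Ea_{l,n-1}$ by $h_{l,n}(p_{l-1}):=\EE(\Ea_{l,n-1}\mid p_{l-1})$, the elementary inequality $|\mathrm{Cov}(f(X),g(X))|\le\mathrm{Lip}(f)\,\mathrm{Lip}(g)\,\mathrm{Var}(X)$ reduces the bound to $\mathrm{Var}(p_{l-1})=O((1\vee P_0)^2/N)$ (Theorem~\ref{theo-u-p}) and to a Lipschitz estimate $\mathrm{Lip}(h_{l,n})=O((1-\epsilon)^{n-l})$ valid near the deterministic trajectory $P_{l-1}$. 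The latter quantifies the exponentially decaying sensitivity of the product $\Ea_{l,n-1}$ to its initial variance, and I expect this to be the main obstacle: in the genuinely unstable regime $|A|>1$ the products are not uniformly bounded, so a naive pathwise coupling fails and one must re-run the change-of-probability construction from the proof of Theorem~\ref{key-theo}, coupling two sample-variance chains started at $p$ and $p'$ and invoking the Riccati contraction (\ref{ricc-lower-cv-2}) under the tilted measure. Granting this, $c_{l,n}^2=O((1\vee P_0)^4(1-\epsilon)^{2(n-l)}/N^2)$ sums to a convergent geometric series, yielding the uniform $O((1\vee P_0)^2/N)$ bias bound; the estimate for $\widehat{m}^o_n-\widehat{X}_n$ follows in the same way from the updating identity.
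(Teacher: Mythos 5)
Your treatment of the mean-error estimate \eqref{estimates-m-wX-k} is correct, though it takes a slightly different route from the paper: you run the error recursion for $m_n-\widehat{X}^-_n$ with the \emph{random} coefficients $A/(1+Sp_n)$ and control the resulting products $\Ea_{l,n}$ through Theorem~\ref{key-theo}, whereas the paper splits the updating step with the deterministic gain, $(\widehat{m}_n-\widehat{X}_n)=(1-G_nC)(m_n-\widehat{X}^-_n)+(g_n-G_n)(Y_n-Cm_n)+\widehat{\upsilon}_n/\sqrt{N+1}$, so that the unrolled recursion involves only the \emph{deterministic} products $E_{l,n}(P_0)$ controlled by \eqref{est-expo-Ea}, with Theorem~\ref{key-theo} entering only through the uniform moment bounds on $Y_n-Cm_n$ (the lemma giving \eqref{estimates-m-X-x-k}). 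Both routes need $N\geq N_k$ and both close; this part is fine.

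The bias estimate \eqref{estimates-bias-m-wX-k} is where your proposal has a genuine gap, and you name it yourself: everything reduces to $\vert c_{l,n}\vert\leq \iota\,(1\vee P_0)^2(1-\epsilon)^{n-l}/N$ with $c_{l,n}=\EE\bigl(\Ea_{l,n-1}(g_{l-1}-G_{l-1})\bigr)$, and after splitting off the gain-bias term the covariance $\mathrm{Cov}(\Ea_{l,n-1},g_{l-1})$ must be shown to be $O\bigl((1-\epsilon)^{n-l}/N\bigr)$. The boundedness of $h_{l,n}(p):=\EE(\Ea_{l,n-1}\,\vert\,p_{l-1}=p)$ supplied by Theorem~\ref{key-theo} only yields $O\bigl((1-\epsilon)^{n-l}/\sqrt{N}\bigr)$; upgrading to $1/N$ requires the Lipschitz estimate $\mathrm{Lip}(h_{l,n})=O\bigl((1-\epsilon)^{n-l}\bigr)$ uniformly in $N$, which you concede ("Granting this") and do not prove. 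This is not a routine detail: it is a sensitivity estimate on conditional expectations of the unbounded random products in the unstable regime $\vert A\vert>1$, and carrying out the two-point coupling under the tilted measure that you sketch would be an argument of comparable difficulty to Theorem~\ref{key-theo} itself, and one that appears nowhere in the paper. The paper avoids this obstacle entirely by never unrolling the bias with random products: combining \eqref{deco-XX-0} and \eqref{deco-XX} gives the one-step recursion $(m_{n+1}^o-\widehat{X}^-_{n+1})=\frac{A}{1+SP_n}(m_n^o-\widehat{X}^-_n)+\beta_n/N$ with \emph{deterministic} contraction coefficients, where $\beta_n$ collects the gain-bias term $N(\EE(g_n)-G_n)(Y_n-C\widehat{X}^-_n)$, bounded by \eqref{bias-estimates}, and the cross term $\EE\bigl(\sqrt{N}(g_n-G_n)\,\sqrt{N}(m_n-\widehat{X}^-_n)\,\vert\,\cdot\bigr)$, which is $O\bigl((1\vee P_0)^2\bigr)$ by Cauchy--Schwarz using Theorem~\ref{theo-u-p} together with the already-proved \eqref{estimates-m-wX-k}. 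In other words, the correlation between the gain error and the state error, which forces your covariance estimate, is dispatched in the paper by Cauchy--Schwarz on a product of two $O(1/\sqrt{N})$ quantities; if you want to keep your quenched/martingale decomposition, you must either prove the Lipschitz estimate on $h_{l,n}$ or restructure the argument so that the $1/N$ rate comes from such a product rather than from a decorrelation property of the $p$-chain.
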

The proof of the mean error estimates (\ref{estimates-m-wX-k}) is provided in the beginning of section~\ref{theo-enkf-wX-proof}. The uniform bias estimates (\ref{estimates-bias-m-wX-k}) are  proved in the end of section~\ref{theo-enkf-wX-proof}.

\subsection{Central limit theorems}\label{sec-clt-intro}

In the further development of this section, $Z^i_k$ and $\widehat{Z}^i_k$ stand for a sequence of independent, centered Gaussian random variables with unit variance indexed by $i\geq 1$, $k\geq 0$. We also consider the variables
$$
(\UU_0,\VV_0):=(\sqrt{P_0}~Z^1_0,\sqrt{2}~P_0~Z^2_0)
$$
and the sequence of the Gaussian random variables 
\begin{gather*}
\left\{\begin{array}{rcl}
\widehat{\UU}_n&:=&\displaystyle -G_nD~\widehat{Z}^1_n\\
&&\\
\widehat{\VV}_n&:=&\displaystyle -2G_nD(1-G_nC)~\sqrt{P_n}~\widehat{Z}^2_n+\sqrt{2}~G_n^2D^2~\widehat{Z}^3_n
\end{array}\right.\left\{
\begin{array}{rcl}
\UU_{n+1}&:=&\displaystyle B~Z^1_{n+1}\\
&&\\
\VV_{n+1}&:=&\displaystyle 2AB~\sqrt{\widehat{P}_n}~Z^{2}_{n+1}+\sqrt{2}~R~Z^3_{n+1}.
\end{array}\right.
\end{gather*}
Observe that the above sequence of independent and centered Gaussian variables are defined in a similar manner to the local stochastic perturbations (\ref{def-loc-fluctuation}) by replacing the sample variances by their limiting values and 
the random vectors $(\widehat{\Delta}_n,\Delta_{n+1})$ by the Gaussian random variables $(\widehat{Z}_n,Z_{n+1})$.
\begin{theo}\label{theo-loc-tcl}
 For any time horizon $n\geq 0$ we have the weak convergence
$$
(\upsilon_0,\nu_0,(\widehat{\upsilon}_{k},\widehat{\nu}_{k},\upsilon_{k+1},\nu_{k+1})_{0\leq k\leq n})\hooklongrightarrow_{N\rightarrow\infty}~(\UU_0,\VV_0,(\widehat{\UU}_{k},\widehat{\VV}_{k},\UU_{k+1},\VV_{k+1})_{0\leq k\leq n}).
$$

\end{theo}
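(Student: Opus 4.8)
The plan is to decouple the randomness entering the local perturbations (\ref{def-loc-fluctuation}) and the initial conditions into two groups: the \emph{pure noise} vectors $(\widehat{\Delta}_k,\Delta_{k+1})_{0\le k\le n}$ together with $\Delta_0$, and the \emph{random coefficients} $(g_k,p_k,\widehat{p}_k)_{0\le k\le n}$. I would first show that the noise group converges in distribution to a family of mutually independent standard Gaussians, while the coefficient group converges in probability to the deterministic optimal values $(G_k,P_k,\widehat{P}_k)$. Inspecting (\ref{def-loc-fluctuation}) and the definitions of $\upsilon_0,\nu_0$, each perturbation is a fixed continuous function $h$ of these two groups (only products, sums, and the square-roots $\sqrt{p_n},\sqrt{\widehat{p}_n}$ occur, and $\sqrt{\cdot}$ is continuous on $\RR_+$); moreover the target variables $(\UU_0,\VV_0,\widehat{\UU}_k,\widehat{\VV}_k,\UU_{k+1},\VV_{k+1})$ are precisely the image under the same map $h$ of the independent Gaussians $(Z^i_k,\widehat{Z}^i_k)$ and of the constants $(G_k,P_k,\widehat{P}_k)$. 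The result will then follow from a Slutsky argument and the continuous mapping theorem.

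First I would establish the weak convergence of the noise array. Recalling (\ref{def-Delta})--(\ref{def-Delta-n}), inside a single block the component $\widehat{Z}$ is built from the Gaussian indices $Z_{N+1},\dots,Z_{2N+1}$, which are disjoint from the indices $Z_1,\dots,Z_N$ feeding $\widecheck{\Za}$ and $\widetilde{\Za}$; hence $\widehat{Z}$ is exactly $N(0,1)$ and independent of $(\widecheck{\Za},\widetilde{\Za})$. The variable $\widecheck{\Za}$ is itself exactly $N(0,1)$, whereas $\widetilde{\Za}$ is a normalised centred sum of the i.i.d.\ summands $(Z_i^2-1)/\sqrt{2}$. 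Applying the bivariate central limit theorem to $(\widecheck{\Za},\widetilde{\Za})$, and using $\mathrm{Var}((Z_1^2-1)/\sqrt{2})=1$ together with $\mathrm{Cov}(Z_1,(Z_1^2-1)/\sqrt{2})=\EE(Z_1(Z_1^2-1))/\sqrt{2}=0$, this pair converges to two independent standard Gaussians. Thus every block $\widehat{\Delta}_k$, $\Delta_{k+1}$ and $\Delta_0$ converges in distribution to a triple (resp.\ pair) of independent standard Gaussians. Since, after the Helmert transformations of Theorem~\ref{stoch-perturbation-theo}, distinct blocks are built from mutually independent noise sources along the time index, marginal convergence of each block upgrades automatically to joint convergence of the whole array toward the independent Gaussians $(\widehat{Z}_k,Z_{k+1},Z_0)$.

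Next I would invoke Theorem~\ref{theo-u-p}: the uniform $L^k$ bounds (\ref{time-unif-bound}) give $\EE(|p_n-P_n|^k)^{1/k}\vee\EE(|\widehat{p}_n-\widehat{P}_n|^k)^{1/k}\vee\EE(|g_n-G_n|^k)^{1/k}\to 0$ as $N\to\infty$ for each fixed time, so the finite collection $(g_k,p_k,\widehat{p}_k)_{0\le k\le n}$ converges in probability to the deterministic vector $(G_k,P_k,\widehat{P}_k)_{0\le k\le n}$. I can now combine the two convergences: since the noise array converges in distribution and the coefficient array converges in probability to a constant, the generalised Slutsky (converging-together) lemma yields joint convergence in distribution of the pair (noise array, coefficient array) to (independent Gaussian array, constant vector)---crucially, no independence between the two groups is required for this step, only the fact that the coefficient limit is deterministic. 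A final application of the continuous mapping theorem to the continuous map $h$ then delivers the claimed joint weak convergence, the independence structure of the limiting variables being inherited from that of the Gaussian array.

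The main point to handle with care is the first step: establishing that the full structured noise array converges \emph{jointly} to a family of mutually independent standard Gaussians, that is, that the $\chi$-square components $\widetilde{\Va}_k,\widetilde{\Wa}_{k+1}$ Gaussianise under the central limit theorem and, in particular, become asymptotically independent of the genuinely Gaussian components $\widecheck{\Va}_k,\widecheck{\Wa}_{k+1}$ residing in the same block. Everything else is a routine Slutsky-plus-continuous-mapping assembly, the quantitative input---the convergence of the sample variances and gains to their optimal values---being already supplied by Theorem~\ref{theo-u-p}.
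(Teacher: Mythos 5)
Your proposal is correct, and its overall architecture coincides with the paper's: convergence of the noise array to independent standard Gaussians, convergence in probability of the random coefficients $(g_k,p_k,\widehat{p}_k)$ to the deterministic values $(G_k,P_k,\widehat{P}_k)$ via Theorem~\ref{theo-u-p}, and then a Slutsky plus continuous-mapping assembly — this last step is literally the paper's one-line conclusion. The genuine difference is in how the noise-array convergence (the paper's Lemma~\ref{lem-tex-delta}) is obtained. The paper proves a quantitative characteristic-function estimate (Lemma~\ref{lem-Helmert}), namely $\vert \EE(e^{iw^{\prime}\Delta})-e^{-\Vert w\Vert^2/2}\vert\leq \epsilon(w)\,e^{-\Vert w\Vert^2/2}$ with an explicit $\epsilon(w)=O(1/\sqrt{N})$, and then propagates this bound across the mutually independent blocks, so that the error for the full array is a finite sum of such $\epsilon$-terms. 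You instead note that within a block the pair $(\widecheck{\Za},\widetilde{\Za})$ is a normalised i.i.d.\ sum of the vectors $(Z_i,(Z_i^2-1)/\sqrt{2})$ whose covariance matrix is the identity (the vanishing cross term coming from $\EE(Z_i^3)=0$), so the bivariate CLT gives the in-block limit, while the exactly Gaussian, index-disjoint component $\widehat{Z}$ and the independence of blocks across time upgrade marginal to joint convergence. Both routes are sound: yours is shorter and invokes the multivariate CLT as a black box; the paper's is self-contained and non-asymptotic, its explicit rate being consistent with the article's general emphasis on quantitative estimates, even though only the qualitative limit is needed for this theorem. Your remaining observations — that Slutsky requires no independence between the noise and the coefficients because the coefficient limit is deterministic, and that the assembling map is continuous (including $\sqrt{\cdot}$ on $\RR_+$) with the target variables being its image at the limit point — are exactly the facts the paper relies on implicitly.
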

The proof of the above theorem is provided in section~\ref{theo-loc-tcl-proof}.
Applying the continuous mapping theorem we obtain the following corollary.
\begin{cor}\label{cor-Z}
 For any time horizon $n\geq 0$ we have the weak convergence
$$
(\nu_0,(\delta_k)_{1\leq k\leq n})\hooklongrightarrow_{N\rightarrow\infty}~(\ZZ_0,(\ZZ_k)_{1\leq k\leq n}),
$$
with the independent Gaussian random variables $\ZZ_n$ defined by
$$
 \ZZ_{n+1}:=A^2~\widehat{\VV}_{n}
+\VV_{n+1}\quad
\mbox{and}
\quad \ZZ_{0}=\VV_0.
$$
In the above display, $\nu_0$ and $\delta_k$ are the local fluctuations of the stochastic Riccati equation discussed  in (\ref{ref-eq-p-Markov}).\end{cor}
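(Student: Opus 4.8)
The plan is to exhibit $(\nu_0,(\delta_k)_{1\le k\le n})$ as the image of the pre-limit tuple of Theorem~\ref{theo-loc-tcl} under a single fixed linear map, and then to pass to the limit via the continuous mapping theorem. The starting point is the identity $\delta_k=A^2\widehat{\nu}_{k-1}+\nu_k$, valid for $1\le k\le n$, which is exactly the definition of the Riccati fluctuation in Corollary~\ref{cor-srre} (display~(\ref{ref-eq-p-Markov})).

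First I would introduce the map $F$ that sends the tuple $(\upsilon_0,\nu_0,(\widehat{\upsilon}_k,\widehat{\nu}_k,\upsilon_{k+1},\nu_{k+1})_{0\le k\le n})$ to $(\nu_0,(A^2\widehat{\nu}_k+\nu_{k+1})_{0\le k\le n-1})$; it discards every $\upsilon$- and $\widehat{\upsilon}$-coordinate and the final block $k=n$, and forms the prescribed linear combinations of the remaining $\widehat{\nu}$- and $\nu$-coordinates. Since $F$ is linear on a finite-dimensional Euclidean space, it is continuous everywhere, so the continuous mapping theorem applies with no further regularity check.

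Applying $F$ to both sides of the weak convergence of Theorem~\ref{theo-loc-tcl}, the pre-limit tuple is sent to $(\nu_0,(\delta_k)_{1\le k\le n})$ by the identity above, while the limiting tuple is sent to $(\VV_0,(A^2\widehat{\VV}_k+\VV_{k+1})_{0\le k\le n-1})$, which is precisely $(\ZZ_0,(\ZZ_k)_{1\le k\le n})$ by the definition of $\ZZ_k$ given in the statement. This yields the announced convergence.

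It remains to confirm that the $\ZZ_k$ form an independent family of centred Gaussians, and this I would read off directly from the construction of the limit variables in Section~\ref{sec-clt-intro}: each $\widehat{\VV}_k$ is a fixed linear combination of $(\widehat{Z}^2_k,\widehat{Z}^3_k)$, each $\VV_{k+1}$ of $(Z^2_{k+1},Z^3_{k+1})$, and $\VV_0$ of $Z^2_0$. Hence for distinct time indices the variables $\ZZ_k=A^2\widehat{\VV}_{k-1}+\VV_k$ (and $\ZZ_0=\VV_0$) are built from pairwise disjoint families of independent Gaussians and are therefore mutually independent and Gaussian. There is no real obstacle here; the only point requiring attention is the index bookkeeping, namely matching $\widehat{\nu}_{k-1}$ with $\nu_k$ inside each $\delta_k$.
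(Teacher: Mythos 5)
Your proposal is correct and follows essentially the same route as the paper, which obtains Corollary~\ref{cor-Z} directly from Theorem~\ref{theo-loc-tcl} via the continuous mapping theorem; you have merely made explicit the fixed linear map $(\upsilon_0,\nu_0,(\widehat{\upsilon}_k,\widehat{\nu}_k,\upsilon_{k+1},\nu_{k+1})_{0\le k\le n})\mapsto(\nu_0,(A^2\widehat{\nu}_{k-1}+\nu_k)_{1\le k\le n})$, the index bookkeeping, and the verification that the limit variables $\ZZ_k$ are independent Gaussians because they are built from pairwise disjoint blocks of the independent families $(Z^i_k)$ and $(\widehat{Z}^i_k)$ with deterministic coefficients. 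No gap: this is exactly the argument the paper leaves implicit.
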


We now consider the collection of stochastic processes $(\widehat{\QQ}_n^N,\QQ^N_{n+1})$ indexed by $N\geq 1$ defined for any $n\geq 0$ by the updating-prediction synthetic diagram
$$
\QQ^N_{n}:=\sqrt{N}(p_{n}-P_{n})\longrightarrow
\widehat{\QQ}_n^N:=\sqrt{N}(\widehat{p}_n-\widehat{P}_n)
\longrightarrow
\QQ^N_{n+1}.
$$

\begin{theo}\label{theo-clt-p-intro}
The stochastic processes $(\widehat{\QQ}_n^N,\QQ^N_{n+1})$ converge in law in the sense of convergence of finite dimensional distributions, and as the number of particles $N\rightarrow\infty$, to a sequence of centered stochastic processes $(\widehat{\QQ}_n,\QQ_{n+1})$ with initial condition $\QQ_0 = \VV_0$ and updating-prediction transitions given by
\begin{equation}\label{recursion-QQ}
\left\{
\begin{array}{rcl}
\widehat{\QQ}_n&=&(1-G_nC)^{2}~\QQ_n+\widehat{\VV}_n\\
&&\\
\QQ_{n+1}&=&A^2\,\widehat{\QQ}_n+\VV_{n+1}.
\end{array}\right.
\end{equation}

\end{theo}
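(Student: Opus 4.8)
The plan is to derive an exact stochastic recursion for the rescaled errors $\QQ^N_n=\sqrt{N}(p_n-P_n)$ and $\widehat{\QQ}^N_n=\sqrt{N}(\widehat{p}_n-\widehat{P}_n)$ by subtracting the deterministic Kalman--Riccati recursions (\ref{kalman-rec}) from the stochastic transitions of Theorem~\ref{stoch-perturbation-theo}, and then to pass to the limit using the moment bounds of Theorem~\ref{theo-u-p} and the joint fluctuation convergence of Theorem~\ref{theo-loc-tcl}. For the prediction step the transitions $p_{n+1}=A^2\widehat{p}_n+R+N^{-1/2}\nu_{n+1}$ and $P_{n+1}=A^2\widehat{P}_n+R$ are both affine with the same coefficient $A^2$, so subtracting and multiplying by $\sqrt{N}$ yields the exact identity $\QQ^N_{n+1}=A^2\widehat{\QQ}^N_n+\nu_{n+1}$, with no remainder term.

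The updating step is where a Taylor expansion is needed. Writing $h(p):=(1-g_nC)\,p=p/(1+Sp)$, the stochastic updating transition reads $\widehat{p}_n=h(p_n)+N^{-1/2}\widehat{\nu}_n$, while the Kalman recursion gives $\widehat{P}_n=(1-G_nC)P_n=h(P_n)$ since $1-G_nC=1/(1+SP_n)$. First I would expand $h$ to first order around $P_n$: as $h'(p)=(1+Sp)^{-2}$ we have $h'(P_n)=(1-G_nC)^2$, and Taylor's theorem together with the bound $|h''(p)|=2S(1+Sp)^{-3}\le 2S$ gives $h(p_n)-h(P_n)=(1-G_nC)^2(p_n-P_n)+r_n$ with $|r_n|\le S(p_n-P_n)^2$. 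Multiplying by $\sqrt{N}$ I obtain $\widehat{\QQ}^N_n=(1-G_nC)^2\QQ^N_n+\widehat{\nu}_n+\sqrt{N}\,r_n$, and the remainder is controlled by Theorem~\ref{theo-u-p} through $\EE(\sqrt{N}\,|r_n|)\le S\sqrt{N}\,\EE((p_n-P_n)^2)\le \iota/\sqrt{N}\to 0$, so $\sqrt{N}\,r_n$ tends to $0$ in probability. This reproduces exactly the linear coefficients appearing in (\ref{recursion-QQ}).

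At this stage each pair $(\widehat{\QQ}^N_n,\QQ^N_{n+1})$ is, up to an error tending to zero in probability, a fixed affine (hence continuous) function of $\QQ^N_n$ and of the local perturbations $(\widehat{\nu}_n,\nu_{n+1})$, with deterministic coefficients $(1-G_nC)^2$ and $A^2$. I would then argue by induction over a fixed finite horizon. The base case is $\QQ^N_0=\nu_0$, read off from the initial condition (\ref{def-initial-condition}), which converges in law to $\VV_0=\QQ_0$ by Theorem~\ref{theo-loc-tcl}. For the inductive step I invoke the joint weak convergence of the whole family $(\nu_0,(\widehat{\nu}_k,\nu_{k+1})_{0\le k\le n})$ to the independent Gaussian family $(\VV_0,(\widehat{\VV}_k,\VV_{k+1})_{0\le k\le n})$ supplied by Theorem~\ref{theo-loc-tcl}. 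Since every $\QQ^N_k$ and $\widehat{\QQ}^N_k$ is, modulo a finite sum of vanishing remainders, a fixed deterministic linear combination of these perturbations, the continuous mapping theorem combined with Slutsky's lemma upgrades this to convergence of the finite dimensional distributions of $(\widehat{\QQ}^N_k,\QQ^N_{k+1})_{0\le k\le n}$ to those of the process defined by the recursion (\ref{recursion-QQ}).

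The hard part will be the interaction between the nonlinearity of the updating map and the fluctuations: one must simultaneously linearise $h$ while keeping the remainder negligible at the $\sqrt{N}$ scale, which rests on the $L^2$ bound of Theorem~\ref{theo-u-p}, and replace the random coefficients $g_n,p_n,\widehat{p}_n$ sitting inside $\widehat{\nu}_n$ and $\nu_{n+1}$ by their deterministic limits $G_n,P_n,\widehat{P}_n$ without disturbing the Gaussian limit, which is precisely what is packaged into Theorem~\ref{theo-loc-tcl}. Because the limiting coefficients are deterministic, the limit is a genuine linear Gaussian recursion, and no tightness argument beyond finite dimensional convergence is required.
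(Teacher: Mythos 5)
Your proof is correct, and it reaches the theorem by a genuinely different and more elementary route than the paper's. The paper does not linearise the one-step updating map: it combines the telescoping decomposition (\ref{telescoping}) along the Riccati semigroup with the second-order estimate (\ref{2nd-order}) and the Lipschitz bound (\ref{Lip-2-ricc}) on $\partial\phi^{n-k}$ from Lemma~\ref{lem-ricc} to write $\QQ^N_n=\partial\phi^n(P_0)\,\nu_0+\sum_{1\leq k\leq n}\partial\phi^{n-k}(P_k)\,\delta_k+\theta^N_n(2)$ with a vanishing remainder, applies Corollary~\ref{cor-Z} (the CLT for the merged perturbations $\delta_k=A^2\widehat{\nu}_{k-1}+\nu_k$) together with the continuous mapping theorem and Slutsky's lemma, and only recovers the recursion (\ref{recursion-QQ}) at the very end from the exact identities $\widehat{\QQ}^N_n=\frac{1}{(1+SP_n)(1+Sp_n)}\,\QQ^N_n+\widehat{\nu}_n$ and $\QQ^N_{n+1}=A^2\,\widehat{\QQ}^N_n+\nu_{n+1}$. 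That first identity (obtained by putting the two fractions over a common denominator) also shows your Taylor expansion is avoidable: the updating error is exactly linear in $\QQ^N_n$, with a random coefficient converging in probability to $(1-G_nC)^2$, so Slutsky alone closes the step; still, your quadratic-remainder control $\sqrt{N}\,\EE(\vert r_n\vert)\leq S\sqrt{N}\,\EE((p_n-P_n)^2)\leq \iota/\sqrt{N}$ via Theorem~\ref{theo-u-p} is perfectly sound, and there is no circularity since Theorem~\ref{theo-u-p} is established independently of the CLT. Your inductive one-step argument buys economy: it needs only the one-step map, the $L^2$ bound of Theorem~\ref{theo-u-p}, and the joint convergence of Theorem~\ref{theo-loc-tcl}, with no semigroup derivative estimates at all. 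The paper's global decomposition buys an explicit representation of the limit, $\QQ_n=F_n(\ZZ_0,\ldots,\ZZ_n)$, with exponentially decaying deterministic weights; moreover, that same second-order expansion is reused immediately afterwards to prove the uniform bias estimates (\ref{bias-estimates}), which a purely distributional induction like yours would not deliver.
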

The proof of the above theorem is provided in section~\ref{theo-loc-tcl-proof}.

We consider the collection of stochastic processes $(\widehat{\XX}_n^N,\XX^N_{n+1})$ indexed by $N\geq 1$ and defined for any $n\geq 0$ by the updating-prediction synthetic diagram
$$
\XX^N_{n}:=\sqrt{N}(m_{n}-\widehat{X}^-_{n})\longrightarrow
\widehat{\XX}_n^N:=\sqrt{N}(\widehat{m}_n-\widehat{X}_n)
\longrightarrow
\XX^N_{n+1}.
$$

\begin{theo}\label{term-theo}
The stochastic processes $(\widehat{\XX}_n^N,\XX^N_{n+1})$ converge in law in the sense of convergence of finite dimensional distributions, as the number of particles $N\rightarrow\infty$, to a sequence of centered stochastic processes $(\widehat{\XX}_n,\XX_{n+1})$ with initial condition $\XX_0 = \UU_0$ and updating-prediction transitions given by

\begin{equation}\label{XX-w-XX-theo}
\left\{
\begin{array}{rcl}
\widehat{\XX}_{n}&=&(1-G_nC)\,\XX_n+\GG_n\,(Y_n-C\widehat{X}^-_n)+\widehat{\UU}_n\\
&&\\
\XX_{n+1}&=&A\,\widehat{\XX}_{n}+\UU_{n+1}.
\end{array}\right.
\end{equation}

\end{theo}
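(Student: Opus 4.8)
The plan is to subtract the Kalman recursions (\ref{kalman-rec}) from the stochastic perturbation equations of Theorem~\ref{stoch-perturbation-theo} and to pass to the limit in the resulting \emph{exact} finite-$N$ recursion for $(\XX^N_n,\widehat{\XX}^N_n)$. First I would treat the updating step. Using the algebraic identity $g_n(Y_n-Cm_n)-G_n(Y_n-C\widehat{X}^-_n)=(g_n-G_n)(Y_n-Cm_n)-G_nC\,(m_n-\widehat{X}^-_n)$ inside $\widehat{m}_n-\widehat{X}_n=(m_n-\widehat{X}^-_n)+[g_n(Y_n-Cm_n)-G_n(Y_n-C\widehat{X}^-_n)]+(N+1)^{-1/2}\widehat{\upsilon}_n$ and multiplying by $\sqrt{N}$ yields
\begin{equation*}
\widehat{\XX}^N_n=(1-G_nC)\,\XX^N_n+\sqrt{N}\,(g_n-G_n)(Y_n-Cm_n)+\sqrt{N/(N+1)}\,\widehat{\upsilon}_n.
\end{equation*}
The prediction step is immediate from $m_{n+1}-\widehat{X}^-_{n+1}=A(\widehat{m}_n-\widehat{X}_n)+(N+1)^{-1/2}\upsilon_{n+1}$, giving $\XX^N_{n+1}=A\,\widehat{\XX}^N_n+\sqrt{N/(N+1)}\,\upsilon_{n+1}$, while the initial condition reads $\XX^N_0=\sqrt{N/(N+1)}\,\upsilon_0$ by (\ref{def-initial-condition}).

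The only term not already in the desired linear form is the gain fluctuation $\sqrt{N}(g_n-G_n)(Y_n-Cm_n)$. Writing the gain map $g(p)=Cp/(C^2p+D^2)$ so that $g_n=g(p_n)$ and $G_n=g(P_n)$, a first-order Taylor expansion gives $\sqrt{N}(g_n-G_n)=g'(P_n)\,\QQ^N_n+r^N_n$ with $g'(P_n)=CD^2/(C^2P_n+D^2)^2$ and $\QQ^N_n=\sqrt{N}(p_n-P_n)$. The remainder obeys $\vert r^N_n\vert\leq \tau\,\vert \QQ^N_n\vert\,\vert p_n-P_n\vert$, which tends to $0$ in probability since Theorem~\ref{theo-u-p} controls $\sqrt{N}\vert p_n-P_n\vert$ in every $L^k$ while $\vert p_n-P_n\vert\to 0$. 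Moreover $Y_n-Cm_n=(Y_n-C\widehat{X}^-_n)-(C/\sqrt{N})\XX^N_n$, the correction being $O(1/\sqrt{N})$. Hence the gain term converges jointly with the rest to $\GG_n(Y_n-C\widehat{X}^-_n)$, where $\GG_n:=g'(P_n)\,\QQ_n$ is the limiting gain fluctuation driven by the variance-fluctuation process $\QQ_n$ of Theorem~\ref{theo-clt-p-intro}.

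To assemble the limit I would exploit that both fluctuation processes are asymptotically continuous functionals of a single driving vector. Indeed, by Theorem~\ref{theo-clt-p-intro} and Corollary~\ref{cor-Z} the process $\QQ^N_n$ is an asymptotically continuous function of $(\nu_0,(\delta_k)_k)$, hence of the perturbations $(\widehat{\nu}_k,\nu_{k+1})$, whereas $\XX^N_n$ is built from $\XX^N_{n-1}$, $\QQ^N_n$ and $(\widehat{\upsilon}_k,\upsilon_{k+1})$. Theorem~\ref{theo-loc-tcl} supplies the joint weak convergence of the full perturbation vector $(\upsilon_0,\nu_0,(\widehat{\upsilon}_k,\widehat{\nu}_k,\upsilon_{k+1},\nu_{k+1})_{0\leq k\leq n})$ to its centered Gaussian limit. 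I would then argue by induction on $n$: assuming the joint convergence of $(\XX^N_k,\widehat{\XX}^N_k,\QQ^N_k)_{k\leq n}$ together with the perturbations, the continuous mapping theorem applied to the two recursions above, combined with Slutsky's lemma to absorb the deterministic coefficients $(G_n,P_n,A,C)$, the vanishing factors $\sqrt{N/(N+1)}-1$, the remainder $r^N_n$, and the $O(1/\sqrt{N})$ correction in $Y_n-Cm_n$, propagates the convergence to time $n+1$ and identifies the limit with the recursion (\ref{XX-w-XX-theo}); centeredness of the limit is inherited from that of the Gaussian perturbations and of $\QQ_n$.

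The main obstacle is the coupling between the mean-fluctuation process $\XX$ and the variance-fluctuation process $\QQ$ through the gain term: the two central limit theorems cannot be handled in isolation. The cleanest remedy is precisely to route everything through the single joint convergence statement of Theorem~\ref{theo-loc-tcl}, so that both $\XX^N$ and $\QQ^N$ appear as continuous images of the same Gaussian limit and their dependence is automatically preserved. The delta-method remainder control is the only genuinely quantitative ingredient, and it is furnished by the uniform $L^k$ estimates of Theorem~\ref{theo-u-p}; everything else is weak-convergence bookkeeping via the continuous mapping theorem and Slutsky's lemma.
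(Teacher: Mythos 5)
Your proposal is correct and follows essentially the same route as the paper: the same updating/prediction decompositions (the paper's (\ref{deco-XX-0})--(\ref{deco-XX})), the same delta-method treatment of the gain fluctuation $\sqrt{N}(g_n-G_n)$ with remainder controlled by Theorem~\ref{theo-u-p}, and the same reliance on Theorems~\ref{theo-loc-tcl} and~\ref{theo-clt-p-intro} combined with the continuous mapping theorem and Slutsky's lemma to handle the coupling between the mean and variance fluctuations. The only difference is organizational: the paper first unrolls the linear recursion into the weighted sum $\sum_{0\le \ell\le n}E_{\ell,n}(P_0)\,\beta_\ell$, passes to the limit term by term, and then verifies the recursive form (\ref{XX-w-XX-theo}), whereas you pass to the limit directly in the recursion by induction; both amount to the same weak-convergence bookkeeping.
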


\subsection{Some comments and comparisons}

As shown in section~\ref{sec-EnKF-intro}, the EnKF  is a rather simple numerical filtering-type technique defined by an ensemble of particles mimicking the evolution of well-known 
Kalman filter equations, replacing `the true' covariances by the ensemble sample-covariances.  
Besides the fact that the EnKF is only consistent for linear Gaussian filtering problems (cf.~\cite{legland}), these popular particle methodologies are applied to complex nonlinear and high dimensional filtering problems arising in fluid mechanics~\cite{beyou,memin-1,memin-2}, weather forecasting~\cite{anderson-jl,anderson-jl-2,burgers,houte}, geosciences and reservoir simulation~\cite{evensen-reservoir,nydal,seiler,skj,weng}, and many other data assimilation disciplines.

To reduce the computational cost, the ensemble sample size is generally chosen 
to be several orders of magnitude below the very large effective dimension of the underlying signal. 
To prevent
the ensemble covariance degeneracy in time, small sample EnKF requires one to combine several customisation techniques such as adaptive covariance/gain inflation and related localisation methodologies~\cite{reich-1,reich-2,bishop-18,tong}. The first rigorous analysis of the bias and the consistency of these additional levels of approximations for linear Gaussian models can be found  in~\cite{bishop-18,bishop-20}. 
  
 As shown in~\cite{dm-k-tu}, for nonlinear filtering problems, the EnKF can be interpreted as particle-type extended Kalman filter.  
As any observer-type algorithm, the well-posedness is not connected to any kind of distance to the optimal filter for any small or large sample sizes, but in terms of non-degeneracy of the observer with respect to time. 
  
As underlined by Majda and Tong in~\cite{majda-2} ``Why EnKF works well with a small ensemble has remained a complete mystery.  Practitioners often attribute the EnKF success to the existence of a low effective filtering dimension. But the definition of the effective filtering dimension has remained elusive, as its associated subspace often evolves with the dynamics''. In the article~\cite{majda-2}, the authors suggest several theoretical guidelines to choose judiciously  the ensemble sample size using covariance inflation and spectral projection techniques on the unstable directions of the signal.

From a purely mathematical perspective, the first important question is clearly to find conditions that ensure that the EnKF is well-founded and consistent, in the sense that it converges to the optimal filter as the ensemble sample size, or equivalently the computational power, tends to infinity (cf. for instance~\cite{legland,mandel} in discrete time settings and~\cite{DelMoral/Tugaut:2016,dm-k-tu} for continuous time EnKF models).

Of course, the consistency property and asymptotic analysis of any statistical Monte Carlo estimate such as the sample means or the sample covariances delivered by EnKF is reassuring but it only gives a partial answer to the performance of these ensemble filters when used with small sample sizes.

The non-asymptotic  analysis for finite sample sizes is clearly a more delicate subject as the continuous or the discrete generation EnKF belong to unconventional classes of nonlinear Markov processes and mean-field particle processes interacting through the sample covariance of the system. 
Here the interaction covariance function of the EnKF  particle filter is not a bounded nor a Lipschitz function as is it is commonly assumed in traditional nonlinear and interacting diffusion theory. As a result, none of the stochastic tools developed in this field, including the
rather elementary variational methodology for nonlinear diffusions developed in~\cite{arnaudon-dm-2,arnaudon-dm-3} nor the more recent powerful differential calculus  developed in~\cite{delarue-1},  can be used to analyse this class of continuous-time models equipped with a 
nonlinear quadratic-type interacting function.  Here, the sample covariance matrices  satisfy a rather sophisticated quadratic-type stochastic Riccati  matrix
diffusion equation, which requires one to develop new stochastic analysis tools~\cite{BishopDelMoralMatricRicc}. 
For a more thorough discussion on mean-field type particle systems and a probabilistic description of genetic type particle filters and Ensemble Kalman type particle filters within this framework we refer to the review article~\cite{bishop-20}, the books~\cite{dm-13,DelMoral/Tugaut:2016,dm-penev} and references therein.

The analysis of the discrete generation EnKF discussed in the present article is more delicate and requires the development of new stochastic methodologies. For instance, in discrete time settings, the EnKF particle system evolves as the Kalman filter (\ref{kalman-rec}) in terms of a two-step prediction-updating interacting Markov chain  (\ref{kalman-EnKF-def}). In contrast with the Gaussian nature of the continuous time EnKF and stochastic Riccati diffusions discussed above,  Theorem~\ref{stoch-perturbation-theo} presented in this article shows that these two transitions combine both Gaussian and $\chi$-square type perturbations. To the best of our knowledge, the analysis of stochastic Riccati rational difference equations, such as those discussed in Corollary~\ref{cor-srre}, has not been covered in the literature. 
Corollary~\ref{cor-p-chi-evol} also provides an alternative description of the sample variances in terms of a Markov chain with  non-central $\chi$-square nonlinear fluctuations.
Again, to the best of our knowledge, these stochastic perturbation theorems and the stability analysis of the analysis of stochastic Riccati rational difference equations are the first of this type in the applied probability literature. 

In a study by Tong, Majda and Kelly~\cite{tong}, the authors analyse the long-time behaviour and ergodicity of discrete generation EnKF using Foster-Lyapunov techniques ensuring that the filter is asymptotically stable with respect to any erroneous initial condition as soon as the signal is stable.
These important properties ensure that the EnKF has a single invariant measure 
and initialisation errors of the EnKF will dissipate with respect to the time parameter.  

Nevertheless, the only ergodicity of the signal and thus the ensemble process does not give any useful information on the convergence and the accuracy of 
the EnKF towards the optimal filter as the number of samples tends to infinity, nor does it allow one to quantify the fluctuations around the optimal filter for a given small or even very large ensemble sample size. On the other hand, effective unstable  directions are connected to unstable-transient signals that may grow exponentially fast. In this context, for obvious reasons, any well tracking EnKF needs to be an unstable-transient particle process. As a consequence, we cannot expect to obtain any type of filter ergodicity properties, nor any time-uniform estimates for the raw moments of the EnKF filter.

From our point of view, the terminology "effective filtering dimensions" discussed above should be understood as the unstable directions of the signal. For time varying and multivariate linear-Gaussian filtering problems, these effective dimensions are rarely known as they are partially observed and they may change in time. For a more detailed discussion on  the stability properties of systems of  time varying linear stochastic differential equations with random coefficients we refer to~\cite{stab-OU-19}.

To avoid the time degeneracy of any type of filtering approximation, a crucial question in practice  is to check wether or not the convergence is uniform with respect to the time parameter. Equivalently, we need to obtain estimates of {\em the error} between the optimal filter and its approximation that are uniform with respect to any time horizon {\em even if the signal and thus any well tracking approximating filter are both unstable and transient}. We emphasise that in this setting, any good approximating filter will diverge. In this context, the filtering step is part of a control loop that allows one to track the transient signal at all times.

The main advantage of one-dimensional filtering problems over multivariate problems with unstable signals is that they have a single effective dimension. 
In this context, the EnKF is also unstable and transient but the
 theorems~\ref{theo-u-p} and~\ref{theo-enkf-wX} presented in this article show that
 the EnKF tracks and converges to the optimal filter uniformly in time.
  These theorems also ensure that the precision error to the optimal filter can be quantified
  for any given finite sample size, and it will not degrade with respect to the time horizon. Theorem~\ref{stoch-perturbation-theo} also indicates that  the sample variances satisfy an autonomous stochastic Riccati-type evolution equation, independent of the sequence of observations.
 From a different angle, theorem~\ref{stab-riccati-stoch} also shows that the sample variances of the EnKF behaves as a Markov chain that converge exponentially fast as the time horizon tends to infinity to a single invariant measure, {\em even if the signal of the filtering problem is transient}.
 
   To the best of our knowledge, this stability theorem and the non-asymptotic theorems discussed above are the first results of this type in the literature on particle filter theory, including the literature of discrete generation EnKF.
   Next we provide a more detailed discussion on the stability of particle filters and their particle approximations, with precise reference pointers. We also suggest an avenue of open research problems.

Most of the stability properties of genetic-type particle filters~\cite{dm-miclo,fk}, including continuous time interacting jump Feynman-Kac particle systems~\cite{arnaudon-dm}, are only valid for stable and ergodic signals. It is clearly out the scope of this article to review in detail all contributions related to the stability of genetic type particle filters, thus we refer to the books~\cite{dm-miclo,dm-13,fk} and references therein. The analysis of these continuous-time or discrete generation genetic type processes for unstable signals remains an important and open research question.

Continuous time EnKF filters for stable signals are also discussed in~\cite{ap-2016,dm-k-tu}, including in the context of the extended Kalman filter. The analysis of unstable and possibly transient signals is more recent, starting at the end of the 2010s in the context of continuous time filtering problems~\cite{dm-b-niclas,BishopDelMoralMatricRicc,bishop-19}, see also~\cite{bishop-20} for a comprehensive review of this subject.  The articles~\cite{BishopDelMoralMatricRicc,dm-b-niclas} analyse the stability and the fluctuation properties of the sample covariance matrices of continuous time EnKF filters for multivariate models under natural and minimal observability and controllability conditions.

To better understand the difficulty in handling unstable effective signal directions, we recall that under natural observability and controllability conditions the Kalman filtering theory developed by Kalman and Bucy in the end of the 1960s ensures that the optimal filter is able to track any possibly unstable signals uniformly with respect to the time horizon. To answer any stability question related to EnKF particle filters or genetic type particle filters, we first need to extend the theory of Kalman and Bucy to this class of approximating particle filters. This research project remains open for genetic type filters. The present article provide a partial answer for discrete generation EnKF particle filters.

To better understand the difficulty in handling and extending  the stability theory of Kalman and Bucy to particle filters,
  it is worth noting a brief of history.

The stability properties of continuous time or discrete generation Kalman filters and their associated Riccati equations for multidimensional filtering problems are usually conducted  under some observability/controllability-type conditions using sophisticated and tedious matrix manipulations.

The first stability properties for continuous time models go back to the beginning of the 1960s with the celebrated work of Kalman and Bucy~\cite{kalman61}, with related prior work by Kalman \cite{kalman60,kalman60-2,kalman60-3} and later work by Bucy \cite{bucy2}. In \cite{anderson71}, the stability of this filter was analysed under a relaxed controllability condition.  The first stability properties for discrete time models go back to to the end of the 1960s with the work of Deyst and Price~\cite{deyst}, which was incorporated in the seminal lecture book of Jazwinski~\cite{jazwinski}.

 However, as noted in \cite{hitz72}, there was in both cases a crucial and commonly made error in the proof which invalidated the results, see~\cite{maybeck} and the more recent articles~\cite{ap-2016,rhudy} for a more detailed discussion and references on these issues. This error was repeated  in numerous subsequent works. A first correction \cite{bucy72corrected} was noted in a reply to \cite{hitz72}; see Bucy's reply \cite{bucy72remarks} and a separate reply by Kalman \cite{kalman72}. However, a complete reworking of the result did not appear in entirety until 2001 by Delyon in~\cite{delyon2001}.   
 
The stability of Kalman filter and Riccati equations is nowadays rather well understood.
Nevertheless it is not always simple to find a self-contained, rigorous and easy-to-read study on these subjects. For a complete and self-contained analysis on the stability and convergence of Kalman-Bucy filtering in continuous time settings, we also refer to~\cite{ap-2016}.  A  self-contained analysis of  Riccati differential equations and discrete-time 
Kalman filters for one-dimensional models is provided in section~\ref{sec-riccati}
and section~\ref{theo-ricc-Kalman-intro-proof} in the present article, see also Theorem~\ref{theo-ricc-Kalman-intro}.

The stability analysis of
continuous time stochastic Riccati  matrix diffusions is also  rather well understood, see for instance~\cite{ap-2016,BishopDelMoralMatricRicc,dm-b-niclas,bishop-20}. 
The extension of these multivariate results in the discrete time case remains an important open question. Theorem~\ref{theo-ricc-Kalman-intro} as well as (\ref{ref-key-obs}) and Theorem~\ref{key-theo}  provide a complete answer to the stability of discrete generation EnKF for one dimensional filtering problems, with possibly unstable signals, yielding what seems to be the first results of this type for this class of particle filters, including both EnKF and genetic type particle filters.

\section{Some preliminary results}
In this section we give several technical results that will be useful in the further development of the article. We start by considering the inverse moments of the non-central $\chi$-square random variables introduced in \eqref{p-noncentral-chi}. We then provide a self-contained analysis of the Riccati difference equations defined in \eqref{def-Ricc}.

\subsection{Non-central $\chi$-square moments}
In this section, we will prove the following technical result that allows one to estimate the inverse raw moments of non-central $\chi$-square random variables.

\begin{prop}\label{prop-raw-moments-chi}
For any $k\geq 0$ and  $n>(k+1)$ we have the uniform estimate
\begin{equation}\label{inv-moments-k}
0\leq 
\EE\left(\left(\frac{2n}{\chi^2_{2n,2nx}}\right)^{k}\right)-\frac{1}{\left(1+x\right)^{k}}\leq 
 \frac{k+1}{n}~k~\left(1+\frac{k+1}{n-(k+1)}\right)^{k}.
\end{equation}
In addition, there exists a constant $\omega_k$ such that, for any $n\geq 2(k+2)$ we have
\begin{equation}\label{inv-moments-k-3}
 \EE\left(\left(\frac{1+x}{\frac{1}{2n}\chi^2_{2n,2nx}}-1\right)^{k}\right)\leq  \frac{\omega_k}{n}~\left(1+x\right)^{k}.
 \end{equation}
\end{prop}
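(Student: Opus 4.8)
The plan is to reduce everything to the single inverse moment $M_k:=\EE[(2n/\chi^2_{2n,2nx})^k]$ and to obtain a closed integral representation for it. Writing $\lambda:=nx$ and using the Poisson mixture representation of the non-central $\chi$-square (conditionally on $K\sim\mathrm{Poisson}(\lambda)$, the variable $\chi^2_{2n,2nx}$ is a central $\chi^2_{2(n+K)}$), together with the classical inverse-moment formula $\EE[(\chi^2_{2m})^{-k}]=\Gamma(m-k)/(2^k\Gamma(m))$, I would first record the conditional identity $\EE[(2n/\chi^2_{2n,2nx})^k\mid K]=\prod_{i=1}^k n/(n+K-i)=n^k\,\Gamma(n+K-k)/\Gamma(n+K)$. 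Feeding this into the Beta integral $\Gamma(p)/\Gamma(p+k)=\Gamma(k)^{-1}\int_0^1 t^{p-1}(1-t)^{k-1}\,dt$ with $p=n+K-k$ and using the generating function $\EE[t^K]=e^{-\lambda(1-t)}$, the expectation over $K$ collapses to
\[
M_k=\frac{n^k}{\Gamma(k)}\int_0^1 s^{k-1}(1-s)^{n-k-1}e^{-\lambda s}\,ds,
\]
valid for $k\ge1$ and $n>k$ (the case $k=0$ being trivial). The left inequality in (\ref{inv-moments-k}), namely $M_k\ge(1+x)^{-k}$, is then immediate from Jensen's inequality applied to the convex map $t\mapsto t^{-k}$ and to $T:=\chi^2_{2n,2nx}/(2n)$, whose mean is $\EE[T]=1+x$.

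For the upper bound in (\ref{inv-moments-k}) the one decisive estimate is $(1-s)^{n-k-1}\le e^{-(n-k-1)s}$ on $[0,1]$, valid because $1-s\le e^{-s}$ and $n-k-1\ge0$. Inserting it and extending the integral to $[0,\infty)$ turns the Beta integral into an elementary Gamma integral, giving the clean bound $M_k\le (n/(n-k-1+\lambda))^k$. Comparing with $(1+x)^{-k}=(n/(n+\lambda))^k$ and writing $m:=n+\lambda$, the difference becomes $M_k-(1+x)^{-k}\le (n/m)^k[(m/(m-k-1))^k-1]$. The elementary inequality $(1+a)^k-1\le ka(1+a)^{k-1}$ with $a=(k+1)/(m-k-1)$, followed by the monotonicity bounds $m\ge n$ and $m-k-1\ge n-k-1$, collapses this to exactly $\tfrac{k(k+1)}{n}(n/(n-k-1))^k$, which is the right-hand side of (\ref{inv-moments-k}). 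Note that this argument uses only $n>k+1$, matching the stated hypothesis.

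To deduce (\ref{inv-moments-k-3}) I would expand binomially, writing $\mu:=1+x$ and
\[
\EE\Big[\big(\tfrac{\mu}{T}-1\big)^k\Big]=\sum_{j=0}^k\binom{k}{j}(-1)^{k-j}\mu^j M_j .
\]
Substituting $\mu^jM_j=1+\mu^j(M_j-\mu^{-j})$ and using $\sum_{j=0}^k\binom{k}{j}(-1)^{k-j}=(1-1)^k=0$ for $k\ge1$, the constant part cancels and only $\sum_{j}\binom{k}{j}(-1)^{k-j}\mu^j(M_j-\mu^{-j})$ survives. Bounding each term by the first part, together with $\mu^j\le\mu^k$ (as $\mu\ge1$) and the observation that $n\ge2(k+2)$ forces $n/(n-j-1)\le2$ for all $j\le k$, yields $\EE[(\mu/T-1)^k]\le \tfrac{\omega_k}{n}\,\mu^k$ with $\omega_k:=2^k\sum_{j=1}^k\binom{k}{j}j(j+1)$, which is (\ref{inv-moments-k-3}).

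The genuine obstacle is the uniform-in-$x$ upper bound in (\ref{inv-moments-k}): a naive second-order Taylor (Jensen-gap) expansion produces a remainder of the form $\mathrm{Var}(T)\cdot\EE[\text{negative moments of }T]$ which requires higher inverse moments (hence $n>k+2$) and does not obviously remain bounded as $x\to\infty$. The integral representation circumvents this precisely because the inequality $(1-s)^{n-k-1}\le e^{-(n-k-1)s}$ reinserts the non-centrality $\lambda$ into the denominator, automatically producing the uniform factor and the sharp range $n>k+1$; the remaining steps are then purely elementary. Part two becomes essentially free once part one is in hand, the only delicate point being the algebraic cancellation $(1-1)^k=0$ which removes the $O(1)$ terms and leaves the desired $O(1/n)$ remainder.
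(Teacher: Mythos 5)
Your proof is correct and, at its analytic core, coincides with the paper's: both arguments rest on the same integral representation $\EE\big[(2n/\chi^2_{2n,2nx})^k\big]=\frac{n^k}{\Gamma(k)}\int_0^1 s^{k-1}(1-s)^{n-k-1}e^{-nxs}\,ds$, the same key inequality $(1-s)^{n-k-1}\le e^{-(n-k-1)s}$ --- the step that reinserts the non-centrality into the denominator and yields the same intermediate bound $\EE\big[(2n/\chi^2_{2n,2nx})^k\big]\le \big((1+x)-\frac{k+1}{n}\big)^{-k}$, uniformly in $x$ --- and Jensen's inequality for the lower bound. Where you genuinely differ is in the packaging, and each variant buys something. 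For the representation itself, the paper quotes the inverse-moment formula of Bock et al., which expresses these moments through derivatives $\tau^{(k)}_{n-k}$ of the integral remainder in the Taylor expansion of $e^{-x}$, and then evaluates those derivatives as integrals (Lemma~\ref{lem-tech-tau-kn}); your Poisson-mixture route (conditioning on $K\sim\mathrm{Poisson}(nx)$, central $\chi^2$ inverse moments, the Beta integral and the Poisson generating function) derives the same formula from scratch, so no external citation is needed. For the second assertion~\eqref{inv-moments-k-3}, the paper first establishes a crude bound with $\omega_k=(k+2)^{k+2}$ and then splits into even and odd exponents, keeping only the positive terms of the alternating binomial sum; your argument --- cancel the constants via $\sum_{j}\binom{k}{j}(-1)^{k-j}=0$, then bound every remaining term by the first assertion together with the triangle inequality --- reaches the same conclusion with no case distinction and with the explicit constant $\omega_k=2^k\sum_{j=1}^k\binom{k}{j}j(j+1)$. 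One small remark: like the paper's, your part-two argument implicitly requires $k\ge 1$ (the cancellation $(1-1)^k=0$ fails for $k=0$, for which~\eqref{inv-moments-k-3} is anyway not meaningful), so it is worth stating this restriction explicitly.
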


\medskip

In order to prove this result, let us first consider the following Taylor series expansion,
$$
e^{-x}=\sum_{0\leq k\leq n}\frac{(-x)^k}{k!}+\frac{(-x)^{n+1}}{(n+1)!}~\tau_{n+1}(x), n \ge 0,
$$
with the collection of integral remainders given by
$$
\tau_{n+1}(x)=(n+1)~\int_{0}^1~(1-t)^ne^{-tx}~dt=1-\frac{x}{n+2}~\tau_{n+2}(x).
$$
For any $k\geq 1$ we denote by  $\tau^{(k)}_n$ s the  $k$-th differential of the function $\tau_n$.
In this notation, 
 the inverse moment of a non-central $\chi$-square random variable (cf. section 3 in~\cite{bock}) are given, for any $k\geq 0$, by the formulae
$$
\begin{array}{rcl}
\displaystyle
\EE\left(\left(\frac{2}{\chi^2_{2(n+1),2x}}\right)^{(k+1)}\right)
&=&\displaystyle\frac{(-1)^{k}}{k!}~\frac{1}{n-k}~\tau^{(k)}_{n-k}(x).
\end{array}
$$

Setting $k=0$ we find the formulae
\begin{equation*}
\frac{1}{n}~\tau_{n}(x)%&=&(-1)^{n}~\frac{(n-1)!}{x^{n}}\left(e^{-x}-\sum_{0\leq k< n}\frac{(-x)^k}{k!}\right)=\int_{0}^1~(1-t)^{n-1}~e^{-tx}~dt\\
=\EE\left(\frac{2}{\chi^2_{2(n+1),2x}}\right)\geq\frac{2}{\EE(\chi^2_{2(n+1),2x})}=\frac{1}{x+n+1},
\end{equation*}
where we have used Jensen's inequality to obtain the lower bound. This combined with another application of Jensen's inequality implies that
$$
\begin{array}{rcl}
\displaystyle
\frac{(-1)^{k}}{k!}~\frac{1}{n-k}~\tau^{(k)}_{n-k}(x) = 
\EE\left(\left(\frac{2}{\chi^2_{2(n+1),2x}}\right)^{(k+1)}\right)
&\geq&\displaystyle \left(\frac{1}{n}~\tau_{n}(x)\right)^{k+1}\geq  \frac{1}{(x+n+1)^{k+1}}.
\end{array}
$$

%Next note that 
%$$
%\frac{1}{n}\,\tau^{(1)}_{n}=\frac{1}{n+1}\,\tau_{n+1}-\frac{1}{n}\,\tau_{n}.
%$$
%More generally, we have the formula
%$$
%(-1)^{k}~\frac{1}{n-k}~\tau^{(k)}_{n-k}
%=
%\sum_{l=0}^{k} (-1)^l
%~\left(\begin{array}{l}
%k\\
%l
%\end{array}\right)~\frac{1}{n-k+l}~\tau_{n-k+l}=\int_{0}^1~t^k~(1-t)^{(n-1)-k}~e^{-tx}~dt.$$

%For any $m\geq 1$ we obtain the lower bound estimate
%\begin{eqnarray*}
%\tau_{n}(mx)=\frac{1}{m}~\int_{0}^m~\left(1-\frac{s}{m}\right)^{n-1}~e^{-sx}~ds
%&\geq&  \frac{n}{m}~ \frac{1}{x+(n+1)/m}.
%\end{eqnarray*}
On the other hand, note that
for any $0\leq z<1$ we have
$$
\log{(1-z)}\leq -z
\Longrightarrow
 n\log{\left(1-\frac{s}{m}\right)}-sx\leq -\left(x+\frac{n}{m}\right)~s.
$$
This yields the upper bound
$$
m\tau_{n}(mx) = \int_0^{m}~\left(1-\frac{s}{m}\right)^n~e^{-sx}~ds\leq \int_0^{m}~e^{-\left(x+\frac{n}{m}\right)s
}~ds=\frac{1}{x+\frac{n}{m}}~\left(1-e^{-\left(x+\frac{n}{m}\right)m
}\right)\leq \frac{1}{x+\frac{n}{m}}.
$$
Thus, 
for any $m,n\geq 1$ and any $x\geq 0$  we have the estimate
$$
 \frac{1}{x+(n+1)/m}\leq
\frac{m}{n}~\tau_{n}(mx)\leq \frac{1}{x+{(n-1)}/{m}}.
$$

More generally, for any $k,m,n\geq 0$ we have
\begin{eqnarray*}
\frac{(-1)^k}{n+1}~\tau_{n+1}^{(k)}(mx)&=&\int_0^1 t^k~(1-t)^n~e^{-mxt}~dt\\
&=&
\frac{1}{m}\int_0^{m}~\left(\frac{s}{m}\right)^k~\left(1-\frac{s}{m}\right)^n~e^{-sx}~ds\\
&\leq & \frac{1}{m^{k+1}}\int_0^{m}~s^k~e^{-\left(x+\frac{n}{m}\right)s
}~ds\leq   \frac{1}{m^{k+1}}~\frac{1}{\left(x+\frac{n}{m}\right)}~\frac{k!}{\left(x+\frac{n}{m}\right)^{k}}.
\end{eqnarray*}
This discussion can be summarised with the following lemma.

\begin{lem}\label{lem-tech-tau-kn}
For any $n> k\geq 0$, $m\geq 1$ and any $x\geq 0$ we have the estimate
\begin{eqnarray*}
\frac{1}{\left(x+\frac{n+1}{m}\right)^{k+1}}\leq 
\frac{(-1)^{k}}{k!}~\frac{m^{k+1}}{n-k}~\tau^{(k)}_{n-k}(mx)& \leq&  ~\frac{1}{\left(x+\frac{n-k-1}{m}\right)^{k+1}}.
\end{eqnarray*}
\end{lem}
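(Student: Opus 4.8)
The plan is to assemble the two-sided estimate directly from the integral representation of the scaled Taylor remainder that is already derived in the lines preceding the lemma, bounding the same quantity from above and below by two complementary and entirely routine arguments. The single object to be controlled is obtained by shifting the index $n\mapsto n-k-1$ in the displayed identity $\frac{(-1)^k}{n+1}\tau_{n+1}^{(k)}(mx)=\int_0^1 t^k(1-t)^n e^{-mxt}\,dt$ and then substituting $s=mt$, which yields
\[
\frac{(-1)^{k}}{k!}~\frac{m^{k+1}}{n-k}~\tau^{(k)}_{n-k}(mx)=\frac{1}{k!}\int_0^{m}s^k\left(1-\frac{s}{m}\right)^{n-k-1}e^{-xs}\,ds .
\]
Note this shift is legitimate precisely because $n>k$ forces $n-k-1\geq 0$, so that the exponent on $(1-s/m)$ is nonnegative.

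For the upper bound I would reuse the elementary inequality $\log(1-z)\leq -z$ on $[0,1)$, giving $(1-s/m)^{n-k-1}\leq e^{-(n-k-1)s/m}$ for $s\in[0,m)$. Substituting this into the integral, enlarging the domain of integration to $[0,\infty)$, and evaluating the resulting Gamma integral $\int_0^\infty s^k e^{-\lambda s}\,ds=k!/\lambda^{k+1}$ with $\lambda=x+(n-k-1)/m$ produces exactly $\big(x+(n-k-1)/m\big)^{-(k+1)}$. This is essentially the computation already carried out just before the lemma, now kept track of with the $k!$ normalisation, so it requires no new idea.

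For the lower bound the cleanest route is to reinterpret the scaled quantity probabilistically. By the inverse-moment formula recalled above (with $N=n$) together with the scaling $x\mapsto mx$ and the factor $m^{k+1}$, one has
\[
\frac{(-1)^{k}}{k!}~\frac{m^{k+1}}{n-k}~\tau^{(k)}_{n-k}(mx)=\EE\left(\left(\frac{2m}{\chi^2_{2(n+1),2mx}}\right)^{k+1}\right).
\]
Applying the same two-step Jensen argument already used for the case $m=1$ — first $\EE(Z^{k+1})\geq(\EE Z)^{k+1}$ for the convex power, then $\EE(1/\chi^2)\geq 1/\EE(\chi^2)$ for the convex reciprocal — and inserting the mean $\EE(\chi^2_{2(n+1),2mx})=2(n+1)+2mx$ gives the lower bound $\big(2m/(2(n+1)+2mx)\big)^{k+1}=\big(x+(n+1)/m\big)^{-(k+1)}$, as required.

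I do not expect a genuine obstacle here; the lemma is essentially a bookkeeping summary of the estimates established immediately above. The only points demanding care are the consistency of the index shift $n\mapsto n-k-1$ between the Taylor-remainder identity and the $\chi^2$-moment formula, and the correct ordering of the two Jensen applications in the lower bound. Once the integral identity is fixed, the two denominators $(n+1)/m$ and $(n-k-1)/m$ emerge automatically, the former from the chi-square mean and the latter from the exponential envelope.
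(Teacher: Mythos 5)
Your proposal is correct and follows essentially the same route as the paper: the same integral representation of the scaled remainder (index shift plus the substitution $s=mt$), the same $\log(1-z)\leq -z$ envelope and Gamma-integral evaluation for the upper bound, and the same identification with the inverse moments of $\chi^2_{2(n+1),2mx}$ followed by the same two Jensen inequalities for the lower bound. The only cosmetic difference is that the paper establishes the Jensen lower bound at $m=1$ and lets the general case follow by the scaling $x\mapsto mx$, $\tau\mapsto m^{k+1}\tau$, whereas you run the argument directly at general $m$; the content is identical.
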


\bigskip

With this lemma in hand, we are now in a position to prove Proposition \ref{prop-raw-moments-chi}.

\medskip

\begin{proof}[Proof of Proposition \ref{prop-raw-moments-chi}]
By Lemma~\ref{lem-tech-tau-kn} we have
\begin{eqnarray*}
\frac{1}{\left(x+\frac{n+1}{m}\right)^{k+1}}\leq 
\EE\left(\left(\frac{2m}{\chi^2_{2(n+1),2mx}}\right)^{(k+1)}\right)& \leq&  ~\frac{1}{\left(x+\frac{n-k-1}{m}\right)^{k+1}}.
\end{eqnarray*}
This yields the estimate
$$
\frac{1}{\left(1+x\right)^{k}}\leq 
\EE\left(\left(\frac{2n}{\chi^2_{2n,2nx}}\right)^{k}\right)\leq ~\frac{1}{\left((1+x)-\frac{k+1}{n}\right)^{k}}.
$$
This implies that for any $n> k$ we have
$$
0\leq 
\EE\left(\left(\frac{2n}{\chi^2_{2n,2nx}}\right)^{k}\right)-\frac{1}{\left(1+x\right)^{k}}\leq \theta_k(x) \leq \frac{k+1}{n}~\frac{k}{\left(x+\frac{n-(k+1)}{n}\right)^{k}\left(1+x\right)}
$$
with the function
$$
\theta_k(x):=~\frac{1}{\left((1+x)-\frac{k+1}{n}\right)^{k}}-\frac{1}{\left(1+x\right)^{k}}.
$$
This ends the proof of the first assertion.

This first estimate also yields, for any $n\geq k+2$, the rather crude estimate
$$
0\leq \EE\left(\left(\frac{1+x}{\frac{1}{2n}\chi^2_{2n,2nx}}\right)^{k}\right)-1\leq 
 \frac{k}{n}~(k+1)\left(1+x\right)^{k}\left(k+2\right)^{k}\leq  \frac{1}{n}~\left(1+x\right)^{k}\left(k+2\right)^{k+2}.
$$
This implies that
\begin{equation}\label{inv-moments-k-2}
0\leq \EE\left(\left(\frac{1+x}{\frac{1}{2n}\chi^2_{2n,2nx}}\right)^{k}\right)-1\leq  \frac{\omega_k}{n}~\left(1+x\right)^{k}\quad \mbox{\rm with}\quad \omega_k:=\left(k+2\right)^{k+2}.
\end{equation}
Using the decomposition
$$
\left(y-1\right)^k=\sum_{0\leq l\leq  k}\left(\begin{array}{c}
 k\\
 l
 \end{array}
 \right)~(-1)^{l}~y^{k-l}=\sum_{0\leq l< k}\left(\begin{array}{c}
 k\\
 l
 \end{array}
 \right)~(-1)^{l}~\left(y^{k-l}-1\right),
$$
we check that
$$
 \EE\left(\left(\frac{1+x}{\frac{1}{2n}\chi^2_{2n,2nx}}-1\right)^{k}\right)=\sum_{0\leq l< k}\left(\begin{array}{c}
 k\\
 l
 \end{array}
 \right)~(-1)^{l}~\underbrace{\left(\EE\left(\left(\frac{1+x}{\frac{1}{2n}\chi^2_{2n,2nx}}\right)^{k-l}\right)-1\right)}_{\geq 0}.
$$
Thus, for any $n\geq 2(2k+2)$ we have the estimate
\begin{eqnarray*}
 \EE\left(\left(\frac{1+x}{\frac{1}{2n}\chi^2_{2n,2nx}}-1\right)^{2k}\right)&\leq& \sum_{1\leq l\leq  k}\left(\begin{array}{c}
 2k\\
 2l
 \end{array}
 \right)~\left(\EE\left(\left(\frac{1+x}{\frac{1}{2n}\chi^2_{2n,2nx}}\right)^{2l}\right)-1\right).
\end{eqnarray*}
Using (\ref{inv-moments-k-2}), this implies that
$$
 \EE\left(\left(\frac{1+x}{\frac{1}{2n}\chi^2_{2n,2nx}}-1\right)^{2k}\right)\leq  \frac{1}{n}~\left(1+x\right)^{2k}\omega^{\prime}_{2k}.
$$
 with
 $$
  \omega^{\prime}_{2k}:=\sum_{1\leq l\leq  k}\left(\begin{array}{c}
 2k\\
 2l
 \end{array}
 \right)~\left(2(l+1)\right)^{2(l+1)}.
$$
In the same vein, for any $n\geq 2((2k+1)+2)$ we have
\begin{eqnarray*}
 \EE\left(\left(\frac{1+x}{\frac{1}{2n}\chi^2_{2n,2nx}}-1\right)^{2k+1}\right)&\leq& \sum_{0\leq 2l< 2k+1}\left(\begin{array}{c}
 2k+1\\
 2l
 \end{array}
 \right)~\underbrace{\left(\EE\left(\left(\frac{1+x}{\frac{1}{2n}\chi^2_{2n,2nx}}\right)^{2(k-l)+1}\right)-1\right)}_{\geq 0}\\&\leq& \sum_{0\leq l\leq  k}\left(\begin{array}{c}
 2k+1\\
 2(k-l)
 \end{array}
 \right)~\underbrace{\left(\EE\left(\left(\frac{1+x}{\frac{1}{2n}\chi^2_{2n,2nx}}\right)^{2l+1}\right)-1\right)}_{\leq \frac{\omega_{2l+1}}{n}~ (1+x)^{2l+1}}.
\end{eqnarray*}
This implies that
$$
 \EE\left(\left(\frac{1+x}{\frac{1}{2n}\chi^2_{2n,2nx}}-1\right)^{2k+1}\right)\leq \frac{\omega^{\prime}_{2k+1}}{n}~ (1+x)^{2k+1}
$$
with
$$
\omega^{\prime}_{2k+1}=\sum_{0\leq l\leq  k}\left(\begin{array}{c}
 2k+1\\
 2(k-l)
 \end{array}
 \right)~(2l+3)^{2l+3}.
$$
This ends the proof of the proposition.
\end{proof}

\subsection{Riccati difference equations }\label{sec-riccati}
We associate with some parameters $(a,b,c,d)\in \RR_+^4$ satisfying $c>0$ and $ad>bc$ the Riccati difference equation on $\RR_+:=[0,\infty[$ given by the recursion
\begin{equation}\label{ricc-eq}
P_n:=\phi(P_{n-1})\quad \mbox{\rm with}\quad \phi(x):=\frac{a x+b}{cx+d}\quad \mbox{\rm with}\quad x\geq 0.
\end{equation}
Observe that
\begin{equation}\label{def-rho}
\partial \phi(x)=\frac{\rho}{(cx+d)^2}> 0\quad \mbox{\rm and}\quad 
\partial^2 \phi(x)=-\frac{2\rho c}{(cx+d)^3}< 0\quad\mbox{\rm with}\quad \rho:=ad-bc>0.\\
\end{equation}
This shows that $\phi$ is an increasing and concave function.
Rational difference equation of the form (\ref{ricc-eq}) can be solved explicitly. This section provides a brief review on these equations. We set
$$
(u,v,w):=(a/c,d/c,b/c)\in \RR_+^4\quad\mbox{\rm and}\quad\lambda=\lambda_1/\lambda_2\in ]0,1[$$
with the parameters
\begin{equation}\label{parameters-ricc}
 \lambda_2-v:=\frac{u-v}{2}+\sqrt{\left(\frac{v-u}{2}\right)^2+w}>0 \quad \mbox{\rm and}\quad
v-\lambda_1:=\frac{v-u}{2}+\sqrt{\left(\frac{v-u}{2}\right)^2+w}> 0.
\end{equation}
Observe that
\begin{equation}\label{fixed-point-ricc}
(v-\lambda_1) (\lambda_2-v)=w \qquad \mbox{\rm and}\qquad r=\phi(r)>0\Longleftrightarrow r=\lambda_2-v.
\end{equation}
\begin{lem}\label{lem-ricc}
For any $x\geq 0$ and $n\geq 1$ the evolution semigroup $\phi^n=\phi\circ \phi^{n-1}$ of the Riccati difference equation (\ref{ricc-eq}) is positive and is given by the formula
\begin{equation}\label{sol-ricc}
\phi^n(x)=r+(x-r)~
\frac{\left(\lambda_2-\lambda_1\right)~\lambda^n}{\left(x+(v-\lambda_1)\right)~\left(1-\lambda^{n}\right)+(\lambda_2-\lambda_1)~\lambda^{n}}
\end{equation}
with the parameters $\lambda$ and $(\lambda_1,\lambda_2)$ defined in (\ref{parameters-ricc}) and the unique positive fixed point $r$ given in (\ref{fixed-point-ricc}). In addition, for any $n\geq 1$ and any $x\in\RR_+$ we have the estimates
\begin{equation}\label{ul-bounds}
{b}/{d}\leq \phi^n(x)\leq  {a}/{c}.
\end{equation}
For any $x,y\in \RR_+$ and $n\geq 0$ we have the exponential decay to equilibrium
\begin{equation}\label{expo-bounds}
\vert \phi^n(x)-r\vert \leq 
\frac{\lambda_2-\lambda_1}{v-\lambda_1}~\lambda^n~\vert x-r\vert \quad \mbox{and}\quad \vert \phi^n(x)-\phi^n(y)\vert\leq \vert x-y\vert~\left(\frac{\lambda_2-\lambda_1}{v-\lambda_1}\right)^2~\lambda^n.
\end{equation}
In addition, we have the first order derivative 
\begin{equation}\label{1st-order-exp}
\partial \phi^n(y):=\frac{(\lambda_2-\lambda_1)^2~\lambda^n}{\big(\left(y+(v-\lambda_1)\right)~\left(1-\lambda^{n}\right)+(\lambda_2-\lambda_1)~\lambda^{n}\big)^2}=\prod_{0\leq k<n}\frac{\rho}{(c\phi^{k}(y)+d)^2},
\end{equation}
where the parameter $\rho$ was defined in (\ref{def-rho}), which satisfies the second order estimate
\begin{equation}\label{2nd-order}
\left\vert \phi^n(x)-\phi^n(y)-\partial \phi^n(y) (x-y)\right\vert\leq \displaystyle \frac{(\lambda_2-\lambda_1)^2}{(v-\lambda_1)^3}~(x-y)^2~\lambda^n,
\end{equation}
and the Lipschitz estimate
\begin{equation}\label{Lip-2-ricc}
\begin{array}{l}
\displaystyle
\vert\partial \phi^n(x)-\partial \phi^n(y)\vert\leq \iota~
\vert y-x\vert~\lambda^n.
\end{array}
\end{equation}
\end{lem}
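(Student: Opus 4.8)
The plan is to treat $\phi$ as a Möbius (linear-fractional) transformation and reduce the iteration to a single scalar linear map. Writing $\phi$ in the reduced coordinates $(u,v,w)=(a/c,d/c,b/c)$ as $\phi(x)=(ux+w)/(x+v)$, its fixed-point equation $x^2+(v-u)x-w=0$ has exactly two real roots: the positive fixed point $r=\lambda_2-v$ from (\ref{fixed-point-ricc}) and the negative root $\widetilde r:=\lambda_1-v=-(v-\lambda_1)<0$. From (\ref{parameters-ricc}) one reads off $\lambda_1+\lambda_2=u+v$ and $\lambda_1\lambda_2=uv-w=\rho/c^2$, so $\lambda_1,\lambda_2$ are $c^{-1}$ times the eigenvalues of the matrix attached to $\phi$. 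A direct computation using (\ref{def-rho}) and $cv=d$ gives $\partial\phi(r)=\rho/(c\lambda_2)^2=\lambda_1/\lambda_2=\lambda$, so $\lambda\in]0,1[$ is precisely the multiplier of $\phi$ at its attracting fixed point.

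First I would diagonalise $\phi$ by the conjugating Möbius map $\psi(x):=(x-r)/(x-\widetilde r)$, which sends the two fixed points to $0$ and $\infty$ and satisfies $\psi\circ\phi\circ\psi^{-1}(w)=\lambda w$. Iterating gives $\psi(\phi^n(x))=\lambda^n\psi(x)$, i.e. $(\phi^n(x)-r)/(\phi^n(x)-\widetilde r)=\lambda^n(x-r)/(x-\widetilde r)$; solving this linear relation for $\phi^n(x)$ produces (\ref{sol-ricc}), after using $r-\widetilde r=\lambda_2-\lambda_1$ and $(v-\lambda_1)+r=\lambda_2-\lambda_1$ to rewrite the denominator in the stated form. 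To keep the argument self-contained I would then note that (\ref{sol-ricc}) is rigorously confirmed by induction on $n$ via $\phi^{n+1}=\phi\circ\phi^n$, the base case $n=1$ being a direct substitution.

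The remaining assertions are extractions from this closed form. Positivity and (\ref{ul-bounds}) follow from the one-step identities $\phi(x)-a/c=-\rho/[c(cx+d)]<0$ and $\phi(x)-b/d=\rho x/[d(cx+d)]\ge0$, so $\phi(\RR_+)\subset[b/d,a/c)$ and the bounds propagate under composition; here $ad>bc\ge0$ forces $a,d>0$. For the derivative (\ref{1st-order-exp}), differentiating (\ref{sol-ricc}) in $x$ and simplifying the numerator $N'D-ND'$ collapses, thanks again to $(v-\lambda_1)+r=\lambda_2-\lambda_1$, to $(\lambda_2-\lambda_1)^2\lambda^n$, giving $\partial\phi^n(x)=(\lambda_2-\lambda_1)^2\lambda^n/D(x)^2$ with $D(x):=(x+(v-\lambda_1))(1-\lambda^n)+(\lambda_2-\lambda_1)\lambda^n$; the product formula is just the chain rule applied to $\partial\phi(z)=\rho/(cz+d)^2$.

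The single inequality driving every stability bound is the elementary estimate $D(x)\ge v-\lambda_1>0$ for all $x\ge0$, obtained by discarding $x\ge0$ and using $\lambda_2-\lambda_1\ge v-\lambda_1$ (equivalently $r\ge0$). Feeding this into (\ref{sol-ricc}) gives the first half of (\ref{expo-bounds}); bounding $\partial\phi^n\le((\lambda_2-\lambda_1)/(v-\lambda_1))^2\lambda^n$ and applying the mean value theorem gives the second half. One further differentiation yields $\partial^2\phi^n(z)=-2(\lambda_2-\lambda_1)^2\lambda^n(1-\lambda^n)/D(z)^3$, hence $|\partial^2\phi^n|\le 2(\lambda_2-\lambda_1)^2\lambda^n/(v-\lambda_1)^3$; a second-order Taylor expansion with this bound yields (\ref{2nd-order}), and the mean value theorem applied to $\partial\phi^n$ yields (\ref{Lip-2-ricc}) with $\iota=2(\lambda_2-\lambda_1)^2/(v-\lambda_1)^3$. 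The only genuinely delicate point is pinning down the closed form (\ref{sol-ricc}) exactly; once that identity and the denominator bound $D(x)\ge v-\lambda_1$ are secured, every other claim is a one- or two-line differentiation-and-estimate, so I would concentrate the care on the conjugation and the inductive bookkeeping and treat the rest as routine.
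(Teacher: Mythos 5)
Your proposal is correct, and the constants all check out — in particular the factor $\tfrac12$ from the Taylor remainder cancels the $2$ in your bound $|\partial^2\phi^n|\le 2(\lambda_2-\lambda_1)^2\lambda^n/(v-\lambda_1)^3$, so you recover exactly the constant in (\ref{2nd-order}). Your route differs from the paper's in two respects. First, the paper does not derive the closed form (\ref{sol-ricc}) from scratch: it quotes Brand's formula (\ref{f1-pw}) for $\phi^n(x)+v$ from~\cite{brand} and rearranges it; you instead obtain the formula self-containedly by conjugating the M\"obius map at its two fixed points $r=\lambda_2-v$ and $\widetilde r=\lambda_1-v$, identifying the multiplier $\partial\phi(r)=\rho/(c\lambda_2)^2=\lambda_1/\lambda_2=\lambda$, and solving the linear relation $\psi(\phi^n(x))=\lambda^n\psi(x)$ — this is essentially the derivation underlying Brand's formula, and your inductive verification makes it airtight without the external reference. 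Second, for the second-order statements the paper stays purely algebraic: it writes the difference quotient $(\phi^n(x)-\phi^n(y))/(x-y)$ as $(\lambda_2-\lambda_1)^2\lambda^n/(D(x)D(y))$, with $D$ the denominator of (\ref{sol-ricc}), and expands $1/(D(x)D(y))$ around $1/D(y)^2$ to get (\ref{2nd-order}), whereas you differentiate once more and invoke Taylor and the mean value theorem; both arguments rest on the same key denominator bound $D(x)\ge v-\lambda_1>0$ and produce the same constants. A small point in your favour: the paper's appendix proof never actually spells out the Lipschitz estimate (\ref{Lip-2-ricc}) — it ends after establishing (\ref{1st-order-exp}) — while your mean value argument supplies it explicitly with $\iota=2(\lambda_2-\lambda_1)^2/(v-\lambda_1)^3$. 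The remaining items, positivity, (\ref{ul-bounds}) (monotonicity and limits in the paper versus your one-step identities $\phi(x)-a/c=-\rho/[c(cx+d)]$ and $\phi(x)-b/d=\rho x/[d(cx+d)]$) and (\ref{expo-bounds}), are handled in essentially equivalent ways.
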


\bigskip

The proof of the above lemma is rather technical so it is housed in the appendix.

We end this short section with some comparison properties and Riccati differential inequalities. For any $\epsilon\geq 0$ we have
\begin{equation}\label{ricc-eq-epsilon}
\phi_{\epsilon}(x):=\phi(x)+\epsilon=\frac{a_{\epsilon}x+b_{\epsilon}}{cx+d}
\end{equation}
with the parameters
$$
(a_{\epsilon},b_{\epsilon})=((a+\epsilon c),(b+\epsilon d))\in \RR_+^2\Longrightarrow a_{\epsilon}d-b_{\epsilon}c=ad-bc>0.
$$
This shows that the boundedness properties of Riccati type differential 
inequalities of the form $r_n\leq \phi_{\epsilon}(r_{n-1})$  can be deduced from those of the solution to the evolution equation $s_n=\phi_{\epsilon}(s_{n-1})$. For instance, since $\phi_{\epsilon}$ is increasing we have
\begin{equation}\label{ricc-eq-epsilon-bound}
 \phi_{\epsilon}(r_{n-1})\leq \phi_{\epsilon}^n(x)\leq x\vee s^{\epsilon}\leq x\vee \frac{a_{\epsilon}}{c}= x\vee (\epsilon+\frac{a}{c})\quad \mbox{\rm with the fixed point}\quad
 s^{\epsilon}:= \phi_{\epsilon}(s^{\epsilon})>0.
\end{equation}
Moreover, observe that for any $\epsilon\leq \epsilon_1$ we have
\begin{eqnarray*}
0<2c~s^{\epsilon}&=&a-d+\epsilon c+\sqrt{\left((a-d)+\epsilon c\right)^2+4c~(b+\epsilon d)}\\
&\leq& 
a-d+\sqrt{2(a-d)^2+4bc}+2\sqrt{\epsilon_1c~\left(d+\frac{\epsilon_1 c}{2}\right)}+\epsilon_1 c.
\end{eqnarray*}
In the same vein, for any $k\geq 1$ we have
\begin{equation}\label{ricc-eq-power-k}
\phi(x)^k=\left(\frac{a x+b}{cx+d}\right)^k\leq \phi_k(x^k)\quad \mbox{\rm with}\quad \phi_k(x):=\frac{a_{k}x+b_{k}}{c_kx+d_k}
\end{equation}
and where
$$
(a_{k},b_{k},c_k,d_k):=(2^{k-1}a^k,2^{k-1}b,c^k,d^k)\in \RR_+^4.
$$
Note that we have
$$
a_{k}d_k-b_{k}c_k=2^{k-1}((ad)^k-(bc)^k)>0\quad \mbox{\rm since}\quad ad-bc>0.
$$
As above, this shows that the boundedness properties of Riccati type 
inequalities of the form $r_n\leq \phi(r_{n-1})^k$  can be deduced from the solution of the recursion $s_n=\phi_k(s_{n-1})$. Indeed, since $\phi_k$ is increasing we have
\begin{gather}\label{ricc-eq-power-k-bound}
r_n\leq \phi(r_{n-1})^k\leq  \phi_{k}^n(r_0) \leq x\vee s^{k}\quad \mbox{\rm with the fixed point}\quad
 s^{k}:= \phi_{k}(s^{k})>0.
\end{gather}
\subsection{Stability of Kalman filters}\label{theo-ricc-Kalman-intro-proof}
This section is mainly concerned with the proof of Theorem~\ref{theo-ricc-Kalman-intro}. Note that Lemma~\ref{lem-ricc} yields the existence of the positive fixed point $P_{\infty}=\phi(P_{\infty})>0$, as well as the estimate \eqref{ricc-lower-cv-2}.
The fixed point $P_\infty$ can be written in closed form in terms of the parameters $(a,b,c)$ (defined in (\ref{def-Ricc-parameters})) by the formula
\begin{equation}\label{ricc-eq-kalman-parameters-2}
P_{\infty}= \frac{(a-1)+r}{2c}\quad \mbox{\rm with}\quad r:=\sqrt{\left( a-1\right)^2+4bc} \,<a+1.
\end{equation}

In order to prove \eqref{p1-p2-q-2} we need to consider the random products defined for any $0\leq k\leq n$ and $p\geq 0$ by
 \begin{equation}\label{def-Ea}
\displaystyle  E_{k,n}(p):=\prod_{k< l\leq n}\frac{A}{1+S\phi^l(p)},
\end{equation}
since in this notation, for any given $p\geq 0$ and for any $(x_1,x_2)\in \RR^2$ and $n\geq 0$ we have
\begin{eqnarray}\label{X-p-q1-q2}
\widehat{X}_n(x_1,p)- \widehat{X}_n(x_2,p)
&=&E_{0,n}(p)~(x_1-x_2).
\end{eqnarray}

When $k=0$, we simplify notation and we write $E_{n}(p)$ instead of $E_{0,n}(p)$.
When $k=n$ we use the convention $E_{n,n}(p)=1$.
With this in mind, for any $k<l<n$ we readily check the semigroup properties
$$
E_{k,n}(p)=E_{n-k}\left(\phi^k(p)\right)\quad \mbox{\rm and}\quad
E_{n-k}(p)=E_{l-k}(p)~\times~E_{n-l}(\phi^{l-k}(p)).
$$

To obtain appropriate bounds for the products $E_{k, n}$, recall that thanks to \eqref{ul-bounds}, for any $n\geq 1$ and any starting point $P_0$ we have the uniform estimates
\begin{gather}\label{ricc-lower-cv}
R\leq \phi^n(P_0)\leq   A^2/S+R,
\end{gather}
which implies that
\begin{equation}\label{def-Ea-2}
\displaystyle E_{k, n}(p) \leq \left(\frac{\vert A\vert}{1+SB^2}\right)^{n-k}.
\end{equation}

Also observe that 
$$
\frac{A^2-1}{2}\geq \vert A\vert-1\Longrightarrow P_{\infty}=\frac{1}{S}~\left(\frac{A^2-1}{2}\right)+\frac{RS+r}{2S}>\frac{\vert A\vert-1}{S}.
$$
This implies that
 \begin{equation}\label{epsilon-infty}
 \vert A\vert (1-G_\infty C)=:1- \epsilon_{\infty}<1,
\end{equation}
where $G_\infty$ is he gain associated with the steady state $P_{\infty}$ of the Riccati equation and is given by
$$
G_{\infty}=CP_{\infty}/(C^2P_{\infty}+D^2)\Longleftrightarrow 1-G_{\infty}C={1}/{(1+SP_{\infty})}.
$$
%and satisfies
% \begin{equation}\label{G-infty-G}.
% \vert G_n-G_{\infty}\vert\leq \iota_1
%\left(1-\epsilon_1\right)^n~\vert P_0-P_{\infty}\vert.
%\end{equation}

On the other hand, using (\ref{ricc-lower-cv-2}),  for any $p\geq 0$ there exists some integer  $k(p)\geq 1$ such that for any $k> k(p)$,
$$
\frac{1+SP_{\infty}}{1+S\phi^k(p)}=1+\frac{S}{1+S\phi^k(p)}~(P_{\infty}-\phi^k(p))\leq 1+S~\kappa_1~\left(1-\epsilon_1\right)^k~\vert p-P_{\infty}\vert\leq 1+\epsilon_{\infty}.
$$
with the parameter $\epsilon_{\infty}$ defined in (\ref{epsilon-infty}).
This yields the following technical lemma.
\begin{lem}
For any $p\geq 0$ there exists some integer $k(p)\geq 1$ such that for any $n> k(p)$ we have
$$
\sqrt{\vert (\partial \phi)(\phi^n(p))\vert}=
\frac{\vert A\vert }{1+S\phi^{n}(p)} \leq 1-\epsilon^2_{\infty},
$$
with the parameter $\epsilon_{\infty}$ defined in (\ref{epsilon-infty}). In addition, 
 for any $n\geq k\geq  k(p)\geq l$ we have
\begin{equation}\label{est-expo-Ea}
\vert E_{l,n}(p)\vert\leq \left(\frac{\vert A\vert}{1+SB^2}\right)^{k(p)-l}~(1-\epsilon^2_{\infty})^{n-k(p)}
\quad \mbox{\rm and}\quad \vert E_{k,n}(p)\vert\leq (1-\epsilon^2_{\infty})^{n-k}.
\end{equation}
\end{lem}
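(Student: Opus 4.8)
The plan is to establish the displayed identity by direct differentiation, to deduce the scalar bound from the ratio estimate displayed just before the lemma, and then to obtain the two product estimates by splitting $E_{l,n}(p)$ at the threshold $k(p)$.

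First I would record the identity. By (\ref{def-rho}) we have $\partial\phi(x)=\rho/(cx+d)^2$ with $\rho=ad-bc=A^2$, and for the Riccati parameters of the Kalman filter $c=S$ and $d=1$, so that $\partial\phi(x)=A^2/(1+Sx)^2>0$. Hence $\sqrt{\vert\partial\phi(x)\vert}=\vert A\vert/(1+Sx)$, and evaluating at $x=\phi^n(p)$ gives the stated identity. For the scalar inequality I would factor through the fixed point: by (\ref{epsilon-infty}) we have $\vert A\vert/(1+SP_\infty)=\vert A\vert(1-G_\infty C)=1-\epsilon_\infty$, so that
$$
\frac{\vert A\vert}{1+S\phi^n(p)}=(1-\epsilon_\infty)~\frac{1+SP_\infty}{1+S\phi^n(p)}.
$$
The estimate displayed immediately above the lemma (a consequence of the contraction (\ref{ricc-lower-cv-2})) bounds the last ratio by $1+\epsilon_\infty$ whenever $n>k(p)$, giving
$$
\frac{\vert A\vert}{1+S\phi^n(p)}\leq(1-\epsilon_\infty)(1+\epsilon_\infty)=1-\epsilon_\infty^2,
$$
which is the first claim.

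For the product estimates I would split $E_{l,n}(p)=\prod_{l<m\leq n}A/(1+S\phi^m(p))$ at the index $k(p)$. For the $k(p)-l$ factors with $l<m\leq k(p)$ I use the uniform lower bound $\phi^m(p)\geq R=B^2$ from (\ref{ricc-lower-cv}), which yields $\vert A\vert/(1+S\phi^m(p))\leq\vert A\vert/(1+SB^2)$ (consistent with the fully crude bound (\ref{def-Ea-2})). For the $n-k(p)$ factors with $k(p)<m\leq n$ the scalar bound just proved gives $\vert A\vert/(1+S\phi^m(p))\leq1-\epsilon_\infty^2$. Multiplying the two blocks yields the first inequality in (\ref{est-expo-Ea}). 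The second inequality is the special case $l=k\geq k(p)$: the low-index block is then empty and each of the $n-k$ remaining factors lies in the sharp regime, so $\vert E_{k,n}(p)\vert\leq(1-\epsilon_\infty^2)^{n-k}$.

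There is little genuine difficulty here once the ratio estimate preceding the lemma is available; the proof is essentially the bookkeeping of the index split. The only point demanding care is to apply the sharp bound $1-\epsilon_\infty^2$ only to indices $m$ strictly larger than $k(p)$, as required by the first part, and to verify that the product decomposes into exactly $k(p)-l$ crude factors and $n-k(p)$ sharp factors so that the exponents in (\ref{est-expo-Ea}) match.
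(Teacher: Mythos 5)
Your proposal is correct and follows essentially the same route as the paper: the paper derives the scalar bound from exactly the ratio estimate $\frac{1+SP_{\infty}}{1+S\phi^{k}(p)}\leq 1+\epsilon_{\infty}$ displayed just before the lemma, factored against $\vert A\vert/(1+SP_{\infty})=1-\epsilon_{\infty}$ from (\ref{epsilon-infty}), and then obtains (\ref{est-expo-Ea}) by the same split of the product at the index $k(p)$, using the crude bound $\vert A\vert/(1+SB^2)$ from (\ref{ricc-lower-cv}) on the low indices and the sharp bound $1-\epsilon_{\infty}^2$ on the rest. Your write-up simply makes explicit the bookkeeping that the paper leaves implicit after the phrase ``This yields the following technical lemma.''
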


\bigskip

We are now in position to state and to prove the main result of this section, which also completes the proof of Theorem~\ref{theo-ricc-Kalman-intro}.
\begin{theo}
For any $p_1\geq 0$ there exists some integer $k(p_1)\geq 1$
for any $n> k(p_1)$ and any $x_1,x_2\in\RR$ we have the almost sure estimate
\begin{equation}\label{p-q1-q2}
\left\vert \widehat{X}_n(x_1,p_1)- \widehat{X}_n(x_2,p_1)\right\vert\leq \left(\frac{\vert A\vert}{1+SB^2}\right)^{k(p_1)}~(1-\epsilon^2_{\infty})^{n-k(p_1)}~\vert x_1-x_2\vert.
\end{equation}
with the parameter $\epsilon_{\infty}$ defined in (\ref{epsilon-infty}).
In addition, for any $p_2\geq 0$ we have
\begin{equation}\label{p1-p2-q}
\EE\left(\left(\widehat{X}_n(x_1,p_1)- \widehat{X}_n(x_2,p_2)\right)^2\right)^{1/2}\leq~\kappa(p_1,p_2,x_2)~(1-\epsilon^2_{\infty})^{n-k(p_1)}~\vert x_1-x_2\vert.
\end{equation}
\end{theo}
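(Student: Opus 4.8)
The plan is to treat the two displayed inequalities separately. The almost sure bound~\eqref{p-q1-q2} is purely deterministic and comes straight from the closed-form product representation of the Kalman semigroup; the mean-square bound~\eqref{p1-p2-q} is then obtained by splitting the difference of the two filters into an $x$-sensitivity part, controlled by~\eqref{p-q1-q2}, and a $p$-sensitivity part, controlled through the perturbation of the Kalman gains.

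For~\eqref{p-q1-q2}, the filters $\widehat{X}_n(x_1,p_1)$ and $\widehat{X}_n(x_2,p_1)$ run with the \emph{same} variance trajectory $(\phi^k(p_1))_{k\geq 0}$, hence with the same gains, so by the identity~\eqref{X-p-q1-q2} their difference equals the deterministic quantity $E_{0,n}(p_1)(x_1-x_2)$. Applying the first estimate in~\eqref{est-expo-Ea} with $l=0$ and the integer $k(p_1)$ produced by the preceding lemma gives $\vert E_{0,n}(p_1)\vert\leq(\vert A\vert/(1+SB^2))^{k(p_1)}(1-\epsilon^2_{\infty})^{n-k(p_1)}$ for every $n>k(p_1)$, which is precisely~\eqref{p-q1-q2}. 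No probabilistic input is needed here.

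For~\eqref{p1-p2-q}, I would write the difference as
\[
\widehat{X}_n(x_1,p_1)-\widehat{X}_n(x_2,p_2)=\big(\widehat{X}_n(x_1,p_1)-\widehat{X}_n(x_2,p_1)\big)+\big(\widehat{X}_n(x_2,p_1)-\widehat{X}_n(x_2,p_2)\big)
\]
and take $L^2$ norms by Minkowski's inequality. The first bracket is deterministic and already bounded by~\eqref{p-q1-q2}, contributing exactly the term in $\vert x_1-x_2\vert$ with the rate $(1-\epsilon^2_{\infty})^{n-k(p_1)}$ displayed in~\eqref{p1-p2-q}. For the second bracket I set $\delta_n:=\widehat{X}^-_n(x_2,p_1)-\widehat{X}^-_n(x_2,p_2)$ and let $G^{(i)}_n$ be the gain attached to $\phi^n(p_i)$; the prediction--updating equations give $\delta_0=0$ and
\[
\delta_{n+1}=\frac{A}{1+S\phi^n(p_1)}\,\delta_n+A\,\big(G^{(1)}_n-G^{(2)}_n\big)\,\big(Y_n-C\widehat{X}^-_n(x_2,p_2)\big),
\]
so that $\delta_n$ is the sum over $k<n$ of the products $E_{k,n}(p_1)$ times the gain increments $(G^{(1)}_k-G^{(2)}_k)$ times the innovations $I_k:=Y_k-C\widehat{X}^-_k(x_2,p_2)$. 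I would then use that the gain is a bounded Lipschitz function of the variance, so the Riccati contraction~\eqref{ricc-lower-cv-2} yields $\vert G^{(1)}_k-G^{(2)}_k\vert\leq\iota\,(1-\epsilon_1)^k\vert p_1-p_2\vert$; that $\vert E_{k,n}(p_1)\vert\leq\iota\,(1-\epsilon^2_{\infty})^{n-k}$ by~\eqref{est-expo-Ea}; and that $\EE(I_k^2)^{1/2}\leq\iota(p_2,x_2)$ uniformly in $k$, since $e_k:=X_k-\widehat{X}^-_k(x_2,p_2)$ obeys the stable recursion $e_{k+1}=(A/(1+S\phi^k(p_2)))\,e_k+BW_{k+1}-AG^{(2)}_kDV_k$ with uniformly bounded driving noise. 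Summing the convolution of the two geometric sequences, and absorbing the slower rate and the resulting linear-in-$n$ prefactor into the constant and the shift $k(p_1)$, yields $\EE(\delta_n^2)^{1/2}\leq\kappa(p_1,p_2,x_2)(1-\epsilon^2_{\infty})^{n-k(p_1)}\vert p_1-p_2\vert$. Thus the second bracket carries the $\vert p_1-p_2\vert$ factor; combined with the first bracket, the full mean-square bound on $\widehat{X}_n(x_1,p_1)-\widehat{X}_n(x_2,p_2)$ has right-hand side $\kappa(p_1,p_2,x_2)(1-\epsilon^2_{\infty})^{n-k(p_1)}(\vert x_1-x_2\vert+\vert p_1-p_2\vert)$, whose $\vert x_1-x_2\vert$-component is~\eqref{p1-p2-q} and which is exactly what is required to close Theorem~\ref{theo-ricc-Kalman-intro}.

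The main obstacle is the mean-square control of the $p$-sensitivity bracket when the signal is unstable, $\vert A\vert>1$: there $Y_k$ and the filter estimates both grow like $\vert A\vert^k$, so feeding $Y_k$ into the bound for $\delta_n$ directly would diverge. The resolution, and the one genuinely new point, is to keep the innovation $I_k=Y_k-C\widehat{X}^-_k(x_2,p_2)$ intact rather than expanding it: by the stable recursion for $e_k$ above, equivalently by Kalman--Bucy theory together with the uniform Riccati bounds~\eqref{ricc-lower-cv}, the innovation stays bounded in $L^2$ uniformly in time even for a transient signal, and this is precisely what lets the geometric decay of the gain increments and of the contraction products dominate the summation.
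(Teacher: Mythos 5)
Your proposal is correct and follows essentially the same route as the paper: the first bound is read off from the identity (\ref{X-p-q1-q2}) together with (\ref{est-expo-Ea}), and for the second you reproduce the paper's mechanism, expressing the $p$-sensitivity as a convolution of the products $E_{l,n}(p_1)$ with geometrically decaying gain differences (the paper's $\rho_l(p_1,p_2)$, controlled via (\ref{ricc-lower-cv-2})) against innovation terms whose uniform $L^2$-boundedness is obtained exactly as in the paper from the stable tracking-error recursion (the paper's bound (\ref{X-p-q-X}) on $\widehat{X}_n(x,p)-X_n$, your recursion for $e_k$). Your point that one must keep the innovation intact for unstable $A$ is precisely the paper's device $U_l(x,p_2)=C^{-1}\left(Y_l-C\widehat{X}^-_l(x_2,p_2)\right)$, and your final bound carrying $\vert x_1-x_2\vert+\vert p_1-p_2\vert$ matches what the paper actually proves, the $\vert x_1-x_2\vert$ factor in the displayed (\ref{p1-p2-q}) being a typo for $\vert p_1-p_2\vert$.
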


\proof
The estimate (\ref{p-q1-q2}) is a direct consequence of  the formula
(\ref{X-p-q1-q2}) and the exponential estimate (\ref{est-expo-Ea}).
On the other hand, we have
\begin{eqnarray*}
  \widehat{X}_n(x,p)-X_n&=&\frac{1}{1+S\phi^n(p)} ~(A\widehat{X}_{n-1}(x,p)-X_n)+\frac{S\phi^n(p)}{1+S\phi^n(p)}~(C^{-1}~Y_n-X_n)\\
  &=&\frac{A}{1+S\phi^n(p)} ~(\widehat{X}_{n-1}(x,p)-X_{n-1})+\widehat{U}_n(p),
  \end{eqnarray*}
where
$$
\widehat{U}_n(p):=\frac{S\phi^n(p)}{1+S\phi^n(p)}~\frac{V_n}{\sqrt{S}}-\frac{1}{1+S\phi^n(p)}~BW_n\Longrightarrow \EE(\widehat{U}_n(p)^2)\leq R+1/S.
$$
Iterating, this implies that
$$
  \widehat{X}_n(x,p)-X_n= E_{n}(p) (x-X_0)+\sum_{1\leq k\leq n}~ E_{k,n}(p)~\widehat{U}_k(p)
$$
and hence
$$
\begin{array}{l}
\displaystyle \EE\left(\left(\widehat{X}_n(x,p)-X_n\right)^2  \right)=\vert E_{0,n}(p)\vert ~\EE((x-X_0)^2)+\sum_{1\leq l\leq n}~\vert E_{l,n}(p)\vert~ \EE(\widehat{U}_l(p)^2).
\end{array}
$$
Using (\ref{est-expo-Ea}) we check the uniform estimate
\begin{equation}\label{X-p-q-X}
 \EE\left(\left(\widehat{X}_n(x,p)-X_n\right)^2  \right)
 \leq \kappa_1(p)~(1+\EE((x-X_0)^2)).
\end{equation}
In the same vein, for any given $q\in \RR$ and for any $(p_1,p_2)\in\RR_+^2$ we check that
\begin{eqnarray*}
\widehat{X}_n(x,p_1)- \widehat{X}_n(x,p_2)&=&\sum_{1\leq l\leq n}~ E_{l,n}(p_1)~\rho_l(p_1,p_2)~U_l(x,p_2)
\end{eqnarray*}
with the function
$$
\begin{array}{l}
 \displaystyle
\rho_n(p_1,p_2):=\frac{S(\phi^n(p_1)-\phi^n(p_2))}{(1+S\phi^n(p_1))(1+S\phi^n(p_2))} \leq S\kappa_1^2~\left(1-\epsilon_1\right)^n~\vert p_1-p_2\vert
\quad \mbox{\rm (by (\ref{ricc-lower-cv-2})})
\end{array}$$
and the collection of random variables
$$
\begin{array}{l}
 \displaystyle
U_n(x,p):= A(X_{n-1}-\widehat{X}_{n-1}(x,p))+BW_n+V_n/\sqrt{S}\\
\\
\Longrightarrow  \EE\left(U_n(x,p)^2  \right)
 \leq A^2\kappa_1(p)~(1+\EE((x-X_0)^2))+R+1/S\quad \mbox{\rm (by (\ref{X-p-q-X}))}.
\end{array}
$$
This implies that
$$
\EE\left(\left(\widehat{X}_n(x,p_1)- \widehat{X}_n(x,p_2)\right)^2\right)^{1/2}\leq\left(\sum_{1\leq l\leq n}~\vert  E_{l,n}(p_1)\vert~\rho_l(p_1,p_2)~\right)~\kappa_2(x,p_2)
$$
with
$$
\kappa_2(x,p):=(A^2\kappa_1(p)+B^2+1/S)~(1+\EE((x-X_0)^2)).
$$
Using (\ref{est-expo-Ea}), for any $n>k(p_1)$ we have
$$
\sum_{1\leq l\leq n}~\vert  E_{l,n}(p_1)\vert~\rho_l(p_1,p_2)\leq
\kappa_3(p_1)~(1-\epsilon^2_{\infty})^{n-k(p_1)}~\vert p_1-p_2\vert.
$$
 This ends the proof of (\ref{p1-p2-q}) and hence the theorem. \cqfd

Summing the two estimates (\ref{p-q1-q2}) and (\ref{p1-p2-q}) we obtain the estimate
(\ref{p1-p2-q-2}) with $\epsilon_2=\epsilon^2_{\infty}$.

\section{Stochastic perturbation analysis}
This section is mainly concerned with the proof of Theorem~\ref{stoch-perturbation-theo}. In addition, we also consider the bias and some time-uniform bounds for the sample covariances $p_n$ and $\widehat p_n$.

\subsection{A local perturbation theorem}\label{proof-theo-stoch-intro}
We fix some parameter $N\geq 1$ and we let $\un$ be the $(N+1)$ column vector with unit entries, $I$ the $(N+1)\times(N+1)$ identity matrix and $J=\un\un^{\prime}$ the $(N+1)\times(N+1)$ matrix with unit entries, where $(\cdot)'$ denotes the transpose operator. We also let $\epsilon$ be  the $(N+1)\times(N+1)$ matrix given by
$$
\epsilon= I-{J}/{(N+1)}
\quad \Longrightarrow \quad
\epsilon^2=\epsilon.
$$
For any given $(N+1)$-column  $p$ we have
$$
\overline{p}:=\frac{1}{N+1}\sum_{1\leq j\leq N+1} p_j\quad\Longrightarrow\quad
p-\overline{p}\,\un:=\left(
\begin{array}{c}
p_1-\overline{p}\\
\vdots\\
p_{N+1}-\overline{p}
\end{array}
\right)=\epsilon p.
$$
From the mean-field evolution equations \eqref{kalman-EnKF-def} and the definitions of $m_n$ and $\widehat m_n$ in \eqref{sm-kalman-EnKF}, it is straightforward to show that
$$
\left\{
\begin{array}{rcl}
\displaystyle
(\xi^i_n-m_n)&=& A~(\widehat{\xi}^i_{n-1}-\widehat{m}_{n-1})+BW^{i,\epsilon}_n\\
\\
(\widehat{\xi}^i_n-\widehat{m}_n)&=&(1-g_nC)(\xi^i_n-m_n)-g_nDV^{i,\epsilon}_n,
\end{array}
\right.
$$
with the random variables
\begin{equation}\label{def-Weps}
W^{i,\epsilon}_n:=\sum_{1\leq j\leq N+1}\epsilon^i_j~W^j_n\quad \mbox{\rm and}\quad
V^{i,\epsilon}_n:=\sum_{1\leq j\leq N+1}\epsilon^i_j~V^j_n.
\end{equation}
Taking the square we obtain the formulae
$$
\left\{
\begin{array}{rcl}
\displaystyle
(\xi^i_n-m_n)^2&=& A^2~(\widehat{\xi}^i_{n-1}-\widehat{m}_{n-1})^2+B^2(W^{i,\epsilon}_n)^2+2AB~(\widehat{\xi}^i_{n-1}-\widehat{m}_{n-1})W^{i,\epsilon}_n\\
\\
(\widehat{\xi}^i_n-\widehat{m}_n)^2&=&(1-g_nC)^2(\xi^i_n-m_n)^2+g_n^2D^2(V^{i,\epsilon}_n)^2-2g_n(1-g_nC)D(\xi^i_n-m_n)V^{i,\epsilon}_n.
\end{array}
\right.
$$
This implies that
$$
\left\{
\begin{array}{rcl}
\displaystyle
p_n&=&\displaystyle  A^2~\widehat{p}_{n-1}+B^2+\frac{1}{\sqrt{N}}~\nu_n\\
\\
\widehat{p}_n&=&\displaystyle(1-g_nC)^2p_n+g_n^2D^2+\frac{1}{\sqrt{N}}~ \widehat{\nu}_n=(1-g_nC)p_n+\frac{1}{\sqrt{N}}~ \widehat{\nu}_n
\end{array}
\right.
$$
with the orthogonal variables
$$
\nu_n:=2AB~\chi_n+B^2~\varsigma_n\quad \mbox{\rm and}\quad
 \widehat{\nu}_n:=-2g_n(1-g_nC)D~\widehat{\chi}_n+g_n^2D^2~\widehat{\varsigma}_n
$$
and the centered random variables
\[
\begin{array}{rclcrcl}
\chi_n&:=&\displaystyle\frac{1}{\sqrt{N}}~\sum_{1\leq i\leq N+1}~(\widehat{\xi}^i_{n-1}-\widehat{m}_{n-1})W^{i,\epsilon}_n&&\varsigma_n&:=&\displaystyle\sqrt{N}~\left(\frac{1}{N}\sum_{1\leq i\leq N+1}(W^{i,\epsilon}_n)^2-1\right)\\
&&&&&&\\
\widehat{\chi}_n&:=&\displaystyle\frac{1}{\sqrt{N}}~\sum_{1\leq i\leq N+1}(\xi^i_n-m_n)V^{i,\epsilon}_n&&\widehat{\varsigma}_n&:=&\displaystyle\sqrt{N}\left(\frac{1}{N}\sum_{1\leq i\leq N+1}(V^{i,\epsilon}_n)^2-1\right).
\end{array}
\]
Further, denote by $\HH$ the $((N+1)\times(N+1))$-Helmert matrix whose first row is defined by
$$
\HH_{1,j}=1/\sqrt{N+1}\quad \forall 1\leq j\leq (N+1)\quad \Longrightarrow\quad \HH_{1,\point}=\un^{\prime}/\sqrt{N+1}.
$$
and whose $i$-th row for $2\leq i\leq (N+1)$ is given by
$$
\begin{array}{l}
\HH_{i,j}:=\overline{\HH}_{i,j}:=\left\{
\begin{array}{ccl}
1/\sqrt{i(i-1)}&\mbox{\rm if}& 1\leq j<i\\
&&\\
-(i-1)/\sqrt{i(i-1)}&\mbox{\rm if}& j=i\\
&&\\
0&\mbox{\rm if}& i<j\leq (N+1).
\end{array}\right.
\end{array}
$$
This yields the matrix decomposition
$$
\HH=\left(\begin{array}{c}\un^{\prime}/\sqrt{N+1}\\
\overline{\HH}\end{array}\right)\quad \mbox{\rm with}\quad \overline{\HH}=(\overline{\HH}_{i,j})_{2\leq i\leq (N+1), 1\leq j\leq (N+1)}\in \RR^{N\times (N+1)}.
$$
We check that $\HH$ is an orthogonal matrix and we have the square root formula
$$
\HH\HH^{\prime}=I=\HH^{\prime}\HH=\frac{1}{N+1}~J+\overline{\HH}^{\prime}~\overline{\HH} \Longrightarrow \overline{\HH}^{\prime}~\overline{\HH}=I-\frac{1}{N+1}~J=\epsilon.
$$
Also note that for any $(N+1)$-column vectors $x,w$ we have
$$
\begin{array}{l}
\displaystyle(\overline{\HH} x)^{\prime}
(\overline{\HH}x)=(x-\overline{x}\,\un)^{\prime}(w-\overline{w}\,\un)=x^{\prime}\epsilon \,w.
\end{array}
$$
In this notation, we readily check that
\begin{eqnarray*}
\displaystyle \widehat{m}_{n}&=&\displaystyle m_n+g_n~(Y_n-Cm_n)-\frac{1}{\sqrt{N+1}}~g_nD~\HH_{1,\point}\Va_n^{(N)}\\
 m_{n+1}&=&\displaystyle A~\widehat{m}_{n}+\frac{1}{\sqrt{N+1}}~B~\HH_{1,\point}\Wa_n^{(N)}
\end{eqnarray*}
with the random vectors
$$
\Wa_n^{(N)}:=\left(
\begin{array}{c}
W^1_n\\
\vdots\\
W^{N+1}_n\\
\end{array}
\right)\quad \mbox{\rm and}\quad \Va_n^{(N)}:=\left(
\begin{array}{c}
V^1_n\\
\vdots\\
V^{N+1}_n\\
\end{array}
\right).
$$
Using this notation, we may write
\[
\begin{array}{rclcrcl}
\chi_n&=&\displaystyle\sqrt{\frac{\widehat{p}_{n-1}}{N}}~\widehat{\zeta}_{n-1}^{\prime}~ \overline{\HH}~\Wa_n^{(N)}&&\varsigma_n&=&\displaystyle\frac{1}{\sqrt{N}}~\left(\left(\overline{\HH}\,\Wa^{(N)}_n\right)^{\prime}\left(\overline{\HH}\,\Wa^{(N)}_n\right)-N\right)\\
&&&&&&\\
\widehat{\chi}_n&=&\displaystyle\sqrt{\frac{p_n}{N}}~\zeta_n^{\prime}~\overline{\HH}~ \Va_n^{(N)}&&\widehat{\varsigma}_n&=&\displaystyle\frac{1}{\sqrt{N}}~\left(\left(\overline{\HH}\,\Va^{(N)}_n\right)^{\prime}\left(\overline{\HH}\,\Va^{(N)}_n\right)-N\right),
\end{array}
\]
with the random vectors on the unit sphere defined by
$$
\widehat{\zeta}_{n-1}:=\frac{\overline{\HH}\,\widehat{\xi}_{n-1}}{\sqrt{\Vert \overline{\HH}\,\widehat{\xi}_{n-1}\Vert}}\quad \mbox{\rm and}\quad
\zeta_n:=\frac{\overline{\HH}\,\xi_n}{\sqrt{\Vert \overline{\HH}\,\xi_n\Vert}}.
$$
Finally observe that
 \begin{equation}\label{ref-indp}
 \HH\,  \Va_n^{(N)}=\left(\begin{array}{c}
\HH_{1,\point}\Va_n^{(N)}
 \\
 \overline{\HH}\,  \Va_n^{(N)}
 \end{array}\right)\quad \mbox{\rm and}\quad \HH\,  \Wa_n^{(N)}=\left(\begin{array}{c}
\HH_{1,\point}\Wa_n^{(N)}
 \\
 \overline{\HH}\,  \Wa_n^{(N)}
 \end{array}\right)\sim \Na\left(0,I_{(N+1)\times(N+1)}\right)
 \end{equation}
We end the proof of Theorem~\ref{stoch-perturbation-theo} by replacing $ (\Va_n^{(N)},\Wa_n^{(N)})$ by $\left( \overline{\HH}\,  \Va_n^{(N)}, \overline{\HH}\,  \Wa_n^{(N)}\right)$.   This ends the proof of the theorem.
\cqfd

  \medskip

 \begin{cor}
For any $n\geq 0$ we have the biasedness properties
\begin{equation}\label{u-bias}
\EE(\widehat{p}_n)< \widehat{P}_n
\quad \mbox{and}\quad \EE(p_{n+1})< P_{n+1}.
\end{equation}
 \end{cor}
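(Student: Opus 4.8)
The plan is to combine the centering of the local fluctuations from Theorem~\ref{stoch-perturbation-theo} with the concavity of the one-step variance update, and then to run an elementary induction comparing $\EE(p_n)$ with $P_n$. I would first record the relevant maps. Writing $\psi(p):=p/(1+Sp)$, the updating steps of the deterministic and stochastic variances read
\[
\widehat{P}_n=(1-G_nC)P_n=\psi(P_n),\qquad
\widehat{p}_n=(1-g_nC)\,p_n+\tfrac{1}{\sqrt{N}}\,\widehat{\nu}_n=\psi(p_n)+\tfrac{1}{\sqrt{N}}\,\widehat{\nu}_n,
\]
while the prediction steps $P_{n+1}=A^2\widehat{P}_n+R$ and $p_{n+1}=A^2\widehat{p}_n+R+\tfrac{1}{\sqrt{N}}\,\nu_{n+1}$ are \emph{affine} in the preceding variance. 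A direct computation gives $\psi'(p)=(1+Sp)^{-2}>0$ and $\psi''(p)=-2S(1+Sp)^{-3}<0$, so $\psi$ is strictly concave and increasing on $\RR_+$.

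Next I would check that the local perturbations have zero mean. By \eqref{def-loc-fluctuation}, $\widehat{\nu}_n$ is an affine combination of $\widehat{\Delta}^2_n$ and $\widehat{\Delta}^3_n$ with $\Fa_n$-measurable coefficients $g_n,p_n,\sqrt{p_n}$; since $(\widehat{\Delta}^2_n,\widehat{\Delta}^3_n)$ are centered and built from the observation noise $\Va_n$, which is independent of $\Fa_n$, one gets $\EE(\widehat{\nu}_n\mid\Fa_n)=0$, and in particular $\EE(\widehat{\nu}_n)=0$. The same argument applied to $\nu_{n+1}$, using that the prediction noise $\Wa_{n+1}$ is independent of $\widehat{\Fa}_n$ while $\widehat{p}_n$ is $\widehat{\Fa}_n$-measurable, yields $\EE(\nu_{n+1})=0$. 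Taking expectations in the two displays above then gives the exact identities
\[
\EE(\widehat{p}_n)=\EE\big(\psi(p_n)\big),\qquad
\EE(p_{n+1})=A^2\,\EE(\widehat{p}_n)+R.
\]

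The conclusion would then follow by induction on the statement $\EE(p_n)\le P_n$. For the base case, \eqref{def-initial-condition} gives $p_0=P_0+\tfrac{1}{\sqrt{N}}\,\nu_0$ with $\nu_0$ centered, hence $\EE(p_0)=P_0$. Assuming $\EE(p_n)\le P_n$, strict Jensen's inequality for the strictly concave $\psi$ together with its monotonicity yields
\[
\EE(\widehat{p}_n)=\EE\big(\psi(p_n)\big)<\psi\big(\EE(p_n)\big)\le\psi(P_n)=\widehat{P}_n,
\]
which is the first inequality in \eqref{u-bias}. Feeding this into the affine prediction identity gives
\[
\EE(p_{n+1})=A^2\,\EE(\widehat{p}_n)+R<A^2\widehat{P}_n+R=P_{n+1},
\]
the second inequality in \eqref{u-bias}; this also shows $\EE(p_{n+1})\le P_{n+1}$, which closes the induction.

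The one point requiring care, and the main (though mild) obstacle, is the \emph{strictness} of Jensen's inequality, which needs $p_n$ to be non-degenerate, i.e. not almost surely constant. This holds at $n=0$ because of the genuine $\chi$-square fluctuation $\nu_0$ in \eqref{def-initial-condition}, and for $n\ge 1$ because, as already noted after Theorem~\ref{stab-riccati-stoch}, the Riccati Markov transition $\Pa(p,dq)$ admits a continuous positive density, so that $p_n$ has a non-degenerate law at every time. Equivalently, this non-degeneracy can be read directly from the non-central $\chi$-square representation \eqref{wp-p-chi} of Corollary~\ref{cor-p-chi-evol}.
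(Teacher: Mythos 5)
Your proof is correct and follows essentially the same route as the paper: strict concavity of $p\mapsto p/(1+Sp)$ plus Jensen's inequality applied after removing the centered perturbations $\widehat{\nu}_n,\nu_{n+1}$, followed by an induction on $\EE(p_n)\le P_n$ that is precisely the ``comparison property of Riccati inequalities'' the paper invokes. Your explicit verification of the centering of the local perturbations and of the non-degeneracy of $p_n$ needed for strict Jensen only makes the argument more self-contained.
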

 \proof
 Since the function
$$
p\in \RR_+\mapsto \frac{p}{1+Sp}\in \RR_+
$$
is strictly concave, we have
$$
 \EE((1-g_nC)~p_n)< \frac{\overline{p}_n}{1+S\overline{p}_n}=(1-\overline{g}_nC)~\overline{p}_n
$$
with
$$
\overline{g}_n=C~\overline{p}_n/(C^2~\overline{p}_n
+D^2)\quad \mbox{\rm and}\quad \overline{p}_n:=\EE(p_n).
$$
This implies that
$$
\EE(\widehat{p}_n)< \displaystyle(1-\overline{g}_nC)~\overline{p}_n
$$
from which we check the Riccati differential inequality
$$
\begin{array}[c]{rcl}
\overline{p}_n&=&\displaystyle  A^2~\EE(\widehat{p}_n)+R< 
A^2~ \frac{\overline{p}_{n-1}}{1+S\overline{p}_{n-1}}+R=\phi(\overline{p}_{n-1}).
\end{array}
$$
The proof of (\ref{u-bias}) then follows from comparison properties of Riccati inequalities.
\cqfd

\subsection{Uniform raw moments estimates}
  For any $k\geq 1$ there exists some finite constant $\tau_k$ such that for any $N\geq 1$ and $n\geq 0$, the Burkholder-Davis-Gundy inequality yields the following uniform estimates
\begin{equation}\label{est-Lk-Delta}
\EE\left(\Vert \widehat{\Delta}_{n}\Vert^{k}\right)\vee
\EE\left(\Vert \Delta_{n}\Vert^{k}\right)\leq \tau_k.
\end{equation}
\textcolor{black}{\begin{prop}
  For any $k\geq 1$, any $N\geq 1$ and time horizon $n\geq 0$, we have the time-uniform estimates
  \begin{equation}\label{est-p-delta}
  \begin{array}{l}
  \EE(\widehat{p}_{n}^{k})\vee \EE\left(\widehat{\nu}_n^{k}\right)\leq \iota_k\\
  \\
\EE(p_{n}^{k})\leq \iota_k~(P_0\vee 1)^k
\quad\mbox{and}\quad \EE\left(\vert\nu_{n+1}\vert^{k}\right)\vee
\EE(\delta_{n+1}^{k})\leq \iota_k~(P_0\vee 1)^{k/2}.
\end{array}
  \end{equation}
\end{prop}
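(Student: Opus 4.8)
The plan is to propagate the moment bounds forward along the two-step updating--prediction recursion of Theorem~\ref{stoch-perturbation-theo}, exploiting at each step the independence of the local noise from the past together with the uniform moment bounds \eqref{est-Lk-Delta} (it suffices to control the $k$-th absolute moments, which dominate the raw moments in the statement). The structural fact that makes the argument close is that the predictable part of $\widehat{p}_n$ is \emph{deterministically} bounded: since $1-g_nC=1/(1+Sp_n)$,
$$
(1-g_nC)\,p_n=\frac{p_n}{1+Sp_n}\leq \frac1S
$$
uniformly in $p_n\geq 0$, $n$ and $N$. This saturation prevents any exponential-in-$n$ accumulation and effectively decouples the recursion.

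First I would record the boundedness of the coefficients entering the local fluctuations \eqref{def-loc-fluctuation}. Writing $g_nD=\pm\sqrt{S}\,p_n/(1+Sp_n)$ and $1-g_nC=1/(1+Sp_n)$, and setting $t:=p_n/(1+Sp_n)\in[0,1/S)$, one has $|g_nD|=\sqrt S\,t\leq 1/\sqrt S$, $(g_nD)^2\leq 1/S$, and $g_nD(1-g_nC)\sqrt{p_n}=\sqrt S\,p_n^{3/2}/(1+Sp_n)^2$, whose maximum over $p_n\geq 0$ is attained at $Sp_n=3$ and is thus bounded by some $\iota$ depending only on $(C,D)$. Consequently every coefficient multiplying $\widehat{\Delta}_n^2$ and $\widehat{\Delta}_n^3$ in $\widehat{\nu}_n$ is bounded by a constant $\iota$.

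Next I would bound the updating fluctuation and covariance. As constructed in Theorem~\ref{stoch-perturbation-theo}, $p_n,g_n$ are $\Fa_n$-measurable while $\widehat{\Delta}_n$ is independent of $\Fa_n$; conditioning on $\Fa_n$ and applying the $c_r$-inequality with \eqref{est-Lk-Delta} gives $\EE(|\widehat{\nu}_n|^k\mid\Fa_n)\leq\iota_k$, hence $\EE(\widehat{\nu}_n^{\,k})\leq\iota_k$. Combined with the saturation bound in $\widehat{p}_n=(1-g_nC)p_n+N^{-1/2}\widehat{\nu}_n$ and $N\geq 1$, this yields $\EE(\widehat{p}_n^{\,k}\mid\Fa_n)\leq\iota_k$ and therefore $\EE(\widehat{p}_n^{\,k})\leq\iota_k$ uniformly in $n$ and $N$, \emph{with no $P_0$ factor}. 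For the prediction step, $\Delta_{n+1}$ is independent of $\widehat{\Fa}_n$ while $\widehat{p}_n$ is $\widehat{\Fa}_n$-measurable, so conditioning on $\widehat{\Fa}_n$ and using the coefficient $\sqrt{\widehat{p}_n}$ of $\Delta_{n+1}^2$ in $\nu_{n+1}$ together with \eqref{est-Lk-Delta} gives $\EE(|\nu_{n+1}|^k\mid\widehat{\Fa}_n)\leq\iota_k(1+\widehat{p}_n^{\,k/2})$; taking expectations and using the bound just obtained produces $\EE(|\nu_{n+1}|^k)\leq\iota_k$, and then $\delta_{n+1}=A^2\widehat{\nu}_n+\nu_{n+1}$ (Corollary~\ref{cor-srre}) gives $\EE(|\delta_{n+1}|^k)\leq\iota_k$. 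Feeding these into $p_{n+1}=A^2\widehat{p}_n+R+N^{-1/2}\nu_{n+1}$ yields $\EE(p_{n+1}^{\,k})\leq\iota_k$ for all $n\geq 0$; the initial condition \eqref{def-initial-condition}, namely $p_0=P_0+N^{-1/2}\nu_0$ with $\nu_0=\sqrt2\,P_0\,\Delta_0^2$, contributes $\EE(p_0^{\,k})\leq\iota_k P_0^k$, and the two cases combine into $\EE(p_n^{\,k})\leq\iota_k(1\vee P_0)^k$. Since $(1\vee P_0)^{k/2}\geq 1$, the bounds on $\nu_{n+1}$ and $\delta_{n+1}$ are a fortiori at most $\iota_k(1\vee P_0)^{k/2}$, as stated.

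The one delicate point -- the main obstacle -- is the moment bound on $\widehat{p}_n$ \emph{uniformly in $p_n$}, i.e.\ without any dependence on the (unbounded) moments of $p_n$. This is precisely what the deterministic saturation $(1-g_nC)p_n\leq 1/S$ and the uniform boundedness of the coefficients of $\widehat{\nu}_n$ deliver; once $\widehat{p}_n$ is controlled uniformly, the prediction step reintroduces only the bounded increment $R$ and a fluctuation whose scale is governed by $\widehat{p}_n$, so the recursion closes without Gronwall-type blow-up in the time horizon. An alternative route, bypassing the coefficient estimates, would invoke the non-central $\chi$-square representation of Corollary~\ref{cor-p-chi-evol}: the prefactor $(p_n/(1+Sp_n))^2$ multiplying $\tfrac{S}{N}\,\widehat{\chi}^{(n,2)}_{N,N/(Sp_n)}$ cancels exactly the growth of the non-centrality parameter $N/(Sp_n)$ as $p_n\to 0$, again giving $\EE(\widehat{p}_n^{\,k}\mid p_n)\leq\iota_k$ uniformly.
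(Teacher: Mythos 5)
Your proof is correct, and it reaches the crucial uniform-in-time moment bound by a genuinely different route than the paper. The paper's proof merges the two steps into the single stochastic Riccati recursion $p_{n+1}=\phi(p_n)+N^{-1/2}\delta_{n+1}$ of Corollary~\ref{cor-srre}, raises it to even powers via the comparison maps $\phi_{2k}$, applies Jensen's inequality to obtain a deterministic Riccati-type difference inequality for $q^{2k}_n:=\EE(p_n^{2k})$, seeds it with the bias estimate \eqref{u-bias} combined with \eqref{ricc-lower-cv} (giving \eqref{qn}), and closes by the comparison properties \eqref{ricc-eq-epsilon-bound} and \eqref{ricc-eq-power-k-bound}, iterating over $k$; the saturation identity $\widehat{p}_n=p_n/(1+Sp_n)+N^{-1/2}\widehat{\nu}_n$ is invoked only at the very end, to dispose of $\EE(\widehat{p}_n^{k})$. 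You instead keep the two-step structure of Theorem~\ref{stoch-perturbation-theo} and put the saturation bound $p_n/(1+Sp_n)\le 1/S$ at the heart of the argument: it gives $\EE(\widehat{p}_n^{k})\le\iota_k$ uniformly in $p_n$, whence $\EE(\vert\nu_{n+1}\vert^{k})\vee\EE(\vert\delta_{n+1}\vert^{k})\le\iota_k$ by conditioning on $\widehat{\Fa}_n$, and then the prediction step $p_{n+1}=A^2\widehat{p}_n+R+N^{-1/2}\nu_{n+1}$ closes the bound in one stroke, with no Riccati comparison machinery, no bias estimate, and no induction in $k$ or in time. Both arguments share the same ingredients at the level of the local perturbations, namely the coefficient bounds for \eqref{def-loc-fluctuation} and the moment bounds \eqref{est-Lk-Delta}, and your independence/conditioning justification of these moment bounds is valid since $(p_n,g_n)$ are $\Fa_n$-measurable while $\widehat{\Delta}_n$ (resp.\ $\Delta_{n+1}$) is independent of $\Fa_n$ (resp.\ $\widehat{\Fa}_n$). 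Your route is more elementary and in fact yields a sharper conclusion: for $n\ge 1$ the moments of $p_n$, and those of $\nu_{n+1}$ and $\delta_{n+1}$ for all $n\ge 0$, are bounded independently of $P_0$, the factors $(1\vee P_0)$ entering only through $p_0$ and $\nu_0$. It also sidesteps a slightly delicate point in the paper's proof, where the bound \eqref{moments-w-nu-k} on $\EE(\nu_{n+1}^{k})$ is expressed through moments of $p_n$ and the identification $(1\vee q_n^{k/2})=(1\vee q_n)^{k/2}$ is glossed over; your conditional bound through $\widehat{p}_n$ never needs moments of $p_n$ at all. What the paper's longer route buys is methodological rather than logical: the passage through the one-step map $\phi$, Jensen's inequality and the comparison maps $\phi_k$ is exactly the mechanism reused for the bias estimates \eqref{bias-estimates} and is the template the authors expect to extend to matrix-valued models, where the argument is organised around the Riccati semigroup rather than around a scalar saturation bound.
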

\proof 
We first prove the last assertion. To this end, for any $k\in \RR$ and $n\geq 1$, set
$
q^{k}_n:=\EE(p_{n}^{k})
$
and observe that for $k \ge 1$,
$$
2^{k-1}\phi(x)^k=2^{k-1}\frac{((A^2+RS)x+R)^k}{(Sx+1)^k}\leq \phi_{k}(x^k) \quad \mbox{\rm where}\quad
 \phi_{k}(x):=\frac{a_kx+b_k}{c_kx+1},
$$
with parameters
$$
\begin{array}{l}
(a_k,b_k,c_k)=(2^{2(k-1)}(A^2+RS)^k,2^{2(k-1)}R^{k},S^k)\in\RR_+^3\\
\\
\Longrightarrow\quad a_k-b_kc_k=2^{2(k-1)}\left((A^2+RS)^k-R^{k}S^k\right)>0.
\end{array}
$$
Combining the above with Corollary \ref{cor-srre} and the inequality $|a + b|^k \le 2^{k-1}(|a|^k + |b|^k)$, $a, b \in \RR$, $k \ge 1$, we obtain
$$
p_{n+1}^{2k}\leq \phi_{2k}(p_n^{2k})+\frac{2^{2k-1}}{N^k}~\left(
A^{4k}~\widehat{\nu}_n^{2k}+\nu^{2k}_{n+1}\right).
$$
By Jensen's inequality, we check that
$$
q^{2k}_{n+1}\leq \phi_{2k}(q^{2k}_n)+\frac{2^{2k-1}}{N^k}~\left(
A^{4k}~\EE(\widehat{\nu}_n^{2k})+\EE(\nu^{2k}_{n+1})\right).
$$
{\color{black}To control the expectations on the right hand side above, note that for any $k\geq 1$ we have the crude estimates
\begin{eqnarray*}
(g_nD)^{2k}&=&\left(\frac{Sp_n}{1+Sp_n}~\frac{p_n}{1+Sp_n}\right)^{k}\leq  S^{-k},\\
(g_nD(1-g_nC))^{k}~p_{n}^{k/2}
&=&\left(\frac{Sp_n}{1+Sp_n}~\frac{1}{1+Sp_n}~\left(\frac{p_n}{1+Sp_n}\right)^2\right)^{k/2}\leq S^{-k}.
\end{eqnarray*}
Combining these estimates with (\ref{est-Lk-Delta}) in \eqref{def-loc-fluctuation}, we check that
\begin{equation}\label{moments-w-nu-k}
\EE\left(\widehat{\nu}_n^{k}\right)\leq \iota_1(k)
\quad
\mbox{\rm and}
\quad
 \EE(\nu_{n+1}^{k})\leq \iota_2(k)~\left(1\vee q_n^{k/2}\right) = \iota_2(k)~\left(1\vee q_n\right)^{k/2}.
\end{equation}
%For odd powers, observe that
%$$
%\EE\left(\vert\widehat{\nu}_n\vert^{2k+1}\right)\leq \EE\left(\widehat{\nu}_n^{2(k+1)}\right)^{1/2} 
%\EE\left(\widehat{\nu}_n^{2k}\right)^{1/2} \leq \iota_3(k)
%$$
%In the same vein, we have
%\begin{eqnarray*}
%\EE\left(\vert\nu_{n+1}\vert^{2k+1}\right)&\leq& \EE\left(\nu_{n+1}^{2(k+1)}\right)^{1/2} 
%\EE\left(\nu_{n+1}^{2k}\right)^{1/2}\\
%& \leq & \iota_4(k)~\left(1\vee q_n\right)^{(k+1)/2}\left(1\vee q_n\right)^{k/2}=
% \iota_4(k)~\left(1\vee q_n\right)^{(2k+1)/2}
%\end{eqnarray*}
Now, \eqref{ricc-lower-cv} and \eqref{u-bias} imply that
\begin{equation}\label{qn}
q_n\leq \iota_0~(P_0\vee 1).
\end{equation}
Combining this with \eqref{moments-w-nu-k} yields the required bounds for $\nu_n^k$, $\widehat \nu_n^k$ and $\delta_n^k$. Returning to the moments of $p_n$, note that the previous estimates imply that
$$
q^{2k}_{n+1}\leq \phi_{2k}(q^{2k}_n)+\frac{\iota_5(k)}{N^k}~(q^k_n\vee 1).
$$
Applying the above estimate to $k=1$, along with \eqref{qn}, we have
$$
q^{2}_{n+1}\leq \phi_{2}(q^{2}_n)+\frac{\iota_1}{N}~( P_0\vee 1).
$$
Using the comparison properties \eqref{ricc-eq-epsilon-bound} and \eqref{ricc-eq-power-k-bound} we conclude that
$$
\sup_{N\geq 1}\sup_{n\geq 0}q^{2}_{n}<\iota_2~(P_0\vee 1)^2.
$$
Iterating the argument for any $k\geq 1$ we readily check that
$$
\sup_{N\geq 1}\sup_{n\geq 0}q^{k}_{n}<\iota_k~(P_0\vee 1)^k
$$
and therefore
$$
\EE\left(\vert\nu_{n+1}\vert^{k}\right)\leq 
 \iota_6(k)~\left(1\vee q_n\right)^{k/2}\leq \iota_7(k)~(P_0\vee 1)^{k/2}
\quad \mbox{\rm and}\quad
\EE\left(\vert\delta_{n+1}\vert^k\right)\leq\iota_k~(P_0\vee 1)^{k/2}
$$}
Finally, we have
$$
\widehat{p}_n=\frac{p_n}{1+Sp_n}+\frac{1}{\sqrt{N}}~\widehat{\nu}_n\Longrightarrow
 \EE(\widehat{p}_{n}^{k})\leq S^{-k} +\frac{1}{N^{k/2}}~ \EE(\vert \widehat{\nu}_n\vert^{k})\leq \iota_k,
$$
which ends the proof of the proposition.}
\cqfd

\section{Stochastic Riccati  equations}\label{stoch-perturbation-sec}
In this section we focus on the proof of Theorem \ref{theo-u-p}, as well as the central limit theorems presented in section \ref{sec-clt-intro}. 
\subsection{Perturbation analysis}\label{mean-error-P-sec}
We start by proving the time uniform estimates \eqref{time-unif-bound}.
To this end, first note that we have the telescoping sum formula
\begin{equation}\label{telescoping}
p_n-P_n=\left(\phi^n(p_0)-\phi^n(P_0)\right)+\sum_{1\leq k\leq n} \left(
\phi^{n-k}(p_k)-\phi^{n-(k-1)}(p_{k-1})\right).
\end{equation}
To deal with the summands above, observe that
$$
\begin{array}{rcl}
\phi^{n-k}(p_k)-\phi^{n-(k-1)}(p_{k-1})
&=&\phi^{n-k}(p_k)-\phi^{n-k}(\phi(p_{k-1}))\\
&&\\
&=&\displaystyle \phi^{n-k}\left(\phi(p_{k-1})+\frac{1}{\sqrt{N}}~ \delta_k\right)-\phi^{n-k}\left(\phi(p_{k-1})\right).
\end{array}$$
Using the Lipschitz estimates (\ref{expo-bounds}) we check that
$$
\vert 
\phi^{n-k}(p_k)-\phi^{n-k}(\phi(p_{k-1}))
\vert\leq \frac{\kappa_1}{\sqrt{N}}~(1-\epsilon_1)^{n-k}
~\left\vert  \delta_k\right\vert,
$$
with the parameter $\epsilon_1$ defined in \eqref{ricc-lower-cv-2}.
Similarly, we have
$$
\vert \phi^n(p_0)-\phi^n(P_0)\vert \leq \frac{\kappa_2}{\sqrt{N}}~(1-\epsilon_1)^{n}
~\left\vert  \nu_0\right\vert.
$$
This yields the almost sure estimate
$$
\begin{array}{l}
 \displaystyle  \sqrt{N}~\vert p_n-P_n\vert  \leq\kappa_3~\left[(1-\epsilon_1)^{n}\vert \nu_0\vert+\sum_{1\leq k\leq n}  ~(1-\epsilon_1)^{n-k}
~\left\vert \delta_k\right\vert\right].
\end{array}
$$
Next we note that
$$
\vert\widehat{p}_n-\widehat{P}_n\vert=\left\vert\frac{p_n-P_n}{(1+SP_n)(1+Sp_n)}\right\vert+\frac{1}{\sqrt{N}}~\vert\widehat{\nu}_n\vert\leq 
\left\vert p_n-P_n\right\vert+\frac{1}{\sqrt{N}}~\vert\widehat{\nu}_n\vert.
$$
Finally, we note the following decomposition for the gain parameters
\begin{align*}
\sqrt{N}(g_n - G_n) 
&= \sqrt{N}C\left(\frac{p_n}{C^2p_n + D^2} - \frac{P_n}{C^2P_n + D^2}\right) \\
&= CD^2 \frac{\sqrt{N}(p_n - P_n)}{(C^2p_n + D^2)(C^2P_n + D^2)}.
\end{align*}
The above estimates and decompositions allow one to derive several mean error bounds as soon as we control the moments of the local errors. For instance, using the uniform moments estimates (\ref{est-p-delta}) we obtain the mean error estimates stated in Theorem~\ref{theo-u-p}. \cqfd

\subsection{Fluctuation analysis}\label{theo-loc-tcl-proof}

This section is mainly concerned with the proofs of Theorem~\ref{theo-loc-tcl} and Theorem \ref{theo-clt-p-intro}. We start with some technical results that will immediately lead to the proof of the former.

%For any $n\geq 0$ we have the weak convergence
%\begin{equation}\label{lem-tex-delta}
%\left(\Delta_0,(\widehat{\Delta}^i_k,\Delta^i_{k+1})_{0\leq k\leq n,~1\leq i\leq 3}\right)\hookrightarrow_{N\rightarrow\infty}~\left((Z^i_0)_{1\leq i\leq 2},(\widehat{Z}^i_k,Z^i_{k+1})_{0\leq k\leq n,~1\leq i\leq 3}\right),
%\end{equation}
%where $Z^i_k,\widehat{Z}^i_k$ stands for independent sequences of independent and centered Gaussian random variables with unit variance. The proof of the above assertion is a direct consequence of the following lemma.

\begin{lem}\label{lem-Helmert}
The characteristic function of random vector $\Delta$ defined in (\ref{def-Delta}) satisfies
for any $w\in \RR^3$ we have the Gaussian type  estimate
\begin{equation}\label{charact-Z}
\left\vert \EE\left(e^{i w^{\prime}\Delta }\right)-e^{-\frac{\Vert w\Vert ^2}{2}}\right\vert\leq \epsilon(w)~e^{-\frac{\Vert w\Vert ^2}{2}},
\end{equation}
with the function
\begin{equation}\label{charact-Z-epsilon}
\epsilon(w):=\vert w_3\vert\sqrt{\frac{2}{N}}\left( \frac{w_2^2}{2}+w_3^2
\right)~\exp{\left(\vert w_3\vert\sqrt{\frac{2}{N}}\left( \frac{w_2^2}{2}+w_3^2
\right)\right)}.
\end{equation}
\end{lem}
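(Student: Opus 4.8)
The plan is to exploit the block structure of $\Delta$ and reduce the estimate to a single scalar characteristic function raised to the power $N$. First I would observe that $\widehat{Z}$ is built from $Z_{N+1},\dots,Z_{2N+1}$, whereas $\widecheck{\Za}$ and $\widetilde{\Za}$ are built from $Z_1,\dots,Z_N$; since these index blocks are disjoint and the $Z_i$ are independent, $\widehat{Z}$ is independent of $(\widecheck{\Za},\widetilde{\Za})$. As $\widehat{Z}$ is exactly $\Na(0,1)$, writing $w=(w_1,w_2,w_3)$ I obtain the factorisation
\begin{equation*}
\EE\left(e^{i w^{\prime}\Delta}\right)=e^{-w_1^2/2}~\Phi,\qquad \Phi:=\EE\left(e^{i(w_2\widecheck{\Za}+w_3\widetilde{\Za})}\right).
\end{equation*}
Since $e^{-\Vert w\Vert^2/2}=e^{-w_1^2/2}e^{-(w_2^2+w_3^2)/2}$, the claim reduces to showing $\vert\Phi-e^{-(w_2^2+w_3^2)/2}\vert\leq\epsilon(w)~e^{-(w_2^2+w_3^2)/2}$, with no contribution from $w_1$ (which explains why $w_1$ is absent from $\epsilon(w)$).

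Next I would compute $\Phi$ in closed form. Setting $f(z):=w_2 z+w_3(z^2-1)/\sqrt{2}$, one has $w_2\widecheck{\Za}+w_3\widetilde{\Za}=N^{-1/2}\sum_{1\leq i\leq N}f(Z_i)$, so by independence $\Phi=\psi^N$ with $\psi:=\EE(e^{if(Z)/\sqrt{N}})$. Using the Gaussian quadratic identity $\EE(e^{\alpha Z+\beta Z^2})=(1-2\beta)^{-1/2}\exp(\alpha^2/(2(1-2\beta)))$ (principal branch, valid since $1-2\beta$ lies in the right half-plane), I would obtain
\begin{equation*}
\psi=\frac{e^{-ib}}{\sqrt{1-2ib}}~\exp\left(\frac{-a^2}{2(1-2ib)}\right),\qquad a:=\frac{w_2}{\sqrt{N}},\quad b:=\frac{w_3}{\sqrt{2N}}.
\end{equation*}

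Then I would introduce $\rho:=\psi~e^{(w_2^2+w_3^2)/(2N)}$, so that $\Phi=\rho^N e^{-(w_2^2+w_3^2)/2}$ and the problem becomes bounding $\vert\rho^N-1\vert$. Using $a^2/2+b^2=(w_2^2+w_3^2)/(2N)$ and the principal branch of the logarithm, a direct computation gives
\begin{equation*}
\log\rho=\frac{-ia^2 b}{1-2ib}+h(b),\qquad h(b):=-ib+b^2-\tfrac12\log(1-2ib).
\end{equation*}
The key estimate is the all-$b$ bound $\vert h(b)\vert\leq\tfrac43\vert b\vert^3$, which I would prove by differentiating to get $h'(b)=4b^2(2b-i)/(1+4b^2)$, whence $\vert h'(b)\vert=4b^2/\sqrt{1+4b^2}\leq 4b^2$ and integration along $[0,b]$ yields the cubic bound. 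Since $N a^2=w_2^2$ and $\vert 1-2ib\vert\geq 1$, this gives $\vert N\log\rho\vert\leq w_2^2\vert b\vert+\tfrac43 N\vert b\vert^3\leq\vert w_3\vert\sqrt{2/N}(\tfrac{w_2^2}{2}+w_3^2)=:A$ after substituting $\vert b\vert=\vert w_3\vert/\sqrt{2N}$. Finally, with $\rho^N=e^{N\log\rho}$ and the elementary inequality $\vert e^{z}-1\vert\leq\vert z\vert e^{\vert z\vert}$, I conclude $\vert\rho^N-1\vert\leq A e^{A}=\epsilon(w)$, and multiplying back by the two Gaussian factors delivers (\ref{charact-Z}).

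The main obstacle is obtaining the exact constants in $\epsilon(w)$: everything hinges on the clean closed form for $\psi$ and, above all, on the sharp bound $\vert h(b)\vert\leq\tfrac43\vert b\vert^3$ valid for \emph{all} real $b$ (a naive power-series bound only works for $\vert 2b\vert<1$ and would force a spurious $(1-\vert 2b\vert)^{-1}$ factor, failing for small $N$ or large $\vert w_3\vert$). A secondary point requiring care is the branch bookkeeping guaranteeing $\rho^N=e^{N\log\rho}$, which is harmless here because $1-2ib$ never meets the negative real axis, so the principal logarithm used in $h$ is the correct continuous branch through $b=0$.
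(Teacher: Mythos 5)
Your proof is correct and follows essentially the same route as the paper's: factor out the independent standard Gaussian coordinate $\widehat{Z}$, compute the characteristic function of $(\widecheck{\Za},\widetilde{\Za})$ in closed form from the Gaussian quadratic identity, split the logarithm into the same two terms (your $N\,\bigl(-ia^2b/(1-2ib)\bigr)$ and $N\,h(b)$ are precisely the paper's $\epsilon_1(u,v)$ and $\epsilon_2(v)$ with $u=w_2$, $v=w_3$), and conclude with $\vert e^{z}-1\vert\leq \vert z\vert e^{\vert z\vert}$. The one point where you improve on the paper is the cubic estimate: the paper's quoted bound $\vert -\log(1-ix)-(ix-x^2/2)\vert\leq 2\vert x\vert^3$ is, with that constant, too weak to yield its claimed $\vert\epsilon_2(v)\vert\leq \vert v\vert^3\sqrt{2/N}$ (it gives twice that), whereas your derivative argument $\vert h'(b)\vert = 4b^2/\sqrt{1+4b^2}\leq 4b^2$ yields $\vert h(b)\vert\leq \tfrac{4}{3}\vert b\vert^3$ (equivalently the sharp constant $1/3$ in the paper's normalisation), which closes the bookkeeping with room to spare.
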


\medskip

The proof of the above lemma follows elementary calculations. For the convenience of the reader a detailed proof is provided in the appendix.

\medskip

\begin{lem}\label{lem-tex-delta}
For any $n\geq 0$ we have the weak convergence
$$
\left(\Delta_0,(\widehat{\Delta}^i_k,\Delta^i_{k+1})_{0\leq k\leq n,~1\leq i\leq 3}\right)\hookrightarrow_{N\rightarrow\infty}~\left((Z^i_0)_{1\leq i\leq 2},(\widehat{Z}^i_k,Z^i_{k+1})_{0\leq k\leq n,~1\leq i\leq 3}\right),
$$
where the $Z^i_k,\widehat{Z}^i_k$ are independent sequences of independent and centered Gaussian random variables with unit variance.
\end{lem}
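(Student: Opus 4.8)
The plan is to establish the joint weak convergence through characteristic functions, so that everything reduces to the single-vector estimate already recorded in Lemma~\ref{lem-Helmert} together with a mutual independence argument.

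First I would make the independence structure explicit. By construction $\Delta_0$ is a function of the standardised initial ensemble $\Wa^i_0=(\xi^i_0-\widehat{X}^-_0)/\sqrt{P_0}$, each $\widehat{\Delta}_k$ is the same map defining $\Delta$ applied to the block $\Va_k=(\Va^i_k)_{i\geq 1}$, and each $\Delta_{k+1}$ is that map applied to $\Wa_{k+1}=(\Wa^i_{k+1})_{i\geq 1}$. Since $\xi_0$ and the families $(\Va_k)_{k\geq 0}$ and $(\Wa_k)_{k\geq 1}$ are mutually independent, the vectors $\Delta_0$, $(\widehat{\Delta}_k)_{0\leq k\leq n}$ and $(\Delta_{k+1})_{0\leq k\leq n}$ are mutually independent. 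Moreover, within any single such vector the hatted coordinate, e.g.\ $\widehat{V}_k=\frac{1}{\sqrt{N+1}}\sum_{1\leq i\leq N+1}\Va^{N+i}_k$, is built from a block of Gaussians disjoint from the one feeding $\widecheck{\Va}_k$ and $\widetilde{\Va}_k$, hence it is an exactly standard Gaussian, independent of the other two coordinates; this is precisely why the error bound (\ref{charact-Z-epsilon}) involves only $w_2$ and $w_3$.

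Next I would assemble the joint characteristic function. Writing $\mathbf{\Delta}^{(N)}$ for the stacked random vector on the left-hand side and partitioning the conjugate Fourier variable $W$ into blocks $w_0\in\RR^2$ and $\widehat{w}_k,w_{k+1}\in\RR^3$, the independence above factorises
\[
\EE\left(e^{iW^{\prime}\mathbf{\Delta}^{(N)}}\right)
=\EE\left(e^{iw_0^{\prime}\Delta_0}\right)\prod_{0\leq k\leq n}\EE\left(e^{i\widehat{w}_k^{\prime}\widehat{\Delta}_k}\right)~\EE\left(e^{iw_{k+1}^{\prime}\Delta_{k+1}}\right).
\]
Every factor is of the form treated in Lemma~\ref{lem-Helmert}: the factors attached to $\widehat{\Delta}_k$ and $\Delta_{k+1}$ are direct instances, while the factor attached to $\Delta_0$ is the special case obtained by setting the $\widecheck{\Za}$-coordinate of $w$ to zero. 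Hence each factor differs from the corresponding $e^{-\Vert\cdot\Vert^2/2}$ by at most the multiplicative error $\epsilon(\cdot)$ of (\ref{charact-Z-epsilon}), and $\epsilon(w)=O(1/\sqrt{N})\to 0$ for each fixed $w$.

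Finally I would pass to the limit. As the product is finite and each factor is bounded by one in modulus while converging to its Gaussian counterpart, the product converges to $e^{-\Vert W\Vert^2/2}$, the characteristic function of a standard Gaussian vector of the appropriate dimension, i.e.\ of the independent family $(Z^i_0)_{1\leq i\leq 2}$, $(\widehat{Z}^i_k,Z^i_{k+1})$. L\'evy's continuity theorem then yields the claimed weak convergence. There is no genuine analytic obstacle beyond this, since the quantitative central-limit input is already supplied by Lemma~\ref{lem-Helmert}; the one point requiring care is the bookkeeping of the independence — in particular that each hatted coordinate draws on a disjoint Gaussian block and that $\Delta_0$ carries only two coordinates — which is the step I would be most careful to get right.
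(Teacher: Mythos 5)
Your proof is correct and follows essentially the same route as the paper: both arguments reduce the joint convergence to the single-vector characteristic function estimate of Lemma~\ref{lem-Helmert}, exploit the mutual independence of $\Delta_0$, the $\widehat{\Delta}_k$ and the $\Delta_{k+1}$ (which the paper uses implicitly through its telescoping decomposition of products of characteristic functions, and which you make explicit by factorising directly), and conclude by L\'evy's continuity theorem. The only difference is bookkeeping: the paper iterates a quantitative telescoping bound $\epsilon(u_1,0,u_2)+\sum_{0\leq k\leq n}\epsilon(w_k)$, whereas you invoke the qualitative fact that a finite product of modulus-one-bounded convergent factors converges, which is equivalent here.
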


\proof
Applying (\ref{charact-Z}) for any $w\in\RR^3$ we check  the rather crude estimates
$$
\left\vert \EE\left(e^{i~w^{\prime}\Delta_{n+1}}\right)-e^{-\frac{\Vert w\Vert ^2}{2}}\right\vert\leq \epsilon(w)~
\quad \mbox{\rm and}\quad
\left\vert \EE\left(e^{i~w^{\prime}\widehat{\Delta}_{n}}\right)-e^{-\frac{\Vert w\Vert ^2}{2}}\right\vert\leq \epsilon(w)~
$$
with the function $\epsilon(w)\longrightarrow_{N\rightarrow \infty}0$ defined in (\ref{charact-Z-epsilon}). 
For any $u,v\in\RR^3$ and $n\geq 0$ we have the decomposition
$$
\begin{array}{l}
\displaystyle\EE\left(e^{i~u^{\prime}\Delta_{n+1}}~e^{i~v^{\prime}\widehat{\Delta}_{n}}\right)-e^{-\frac{\Vert u\Vert ^2}{2}}e^{-\frac{\Vert v\Vert ^2}{2}}\\
\\
\displaystyle=\EE\left(e^{i~v^{\prime}\widehat{\Delta}_{n}}\right)~\left(\EE\left(e^{i~u^{\prime}\Delta_{n+1}}\right)-e^{-\frac{\Vert u\Vert ^2}{2}}\right)+\left(\EE\left(
e^{i~v^{\prime}\widehat{\Delta}_{n}}\right)-e^{-\frac{\Vert v\Vert^2}{2}}\right)e^{-\frac{\Vert u\Vert ^2}{2}}.
\end{array}$$
This yields the estimate
$$
\left\vert
\EE\left(e^{i~u^{\prime}\Delta_{n+1}}~e^{i~v^{\prime}\widehat{\Delta}_{n}}\right)-e^{-\frac{\Vert u\Vert ^2+\Vert v\Vert^2}{2}}\right\vert\leq
\epsilon(u,v):=\epsilon(u)+\epsilon(v).
$$
Consider the sequence of  random vectors
$$
\Lambda_n:=\left(
\begin{array}{c}
\widehat{\Delta}_{n}\\
\Delta_{n+1}
\end{array}\right)\in \RR^6.
$$
In this notation, for any $n\geq 0$ and $w_n\in\RR^6$ we have proven the following estimate
$$
\left\vert
\EE\left(e^{i~w_n^{\prime}\Lambda_n}\right)-e^{-\frac{\Vert w_n\Vert ^2}{2}}\right\vert\leq 
\epsilon(w_n).
$$
Using the decomposition
$$
\begin{array}{l}
\EE\left(e^{i~\left(w_{n-1}^{\prime}\Lambda_{n-1}+w_n^{\prime}\Lambda_n\right)}\right)-e^{-\frac{\Vert w_{n-1}\Vert ^2}{2}}e^{-\frac{\Vert w_n\Vert ^2}{2}}\\
\\
=
\EE\left(e^{i~w_{n-1}^{\prime}\Lambda_{n-1}}\right)~\left(\EE\left(
e^{i~w_n^{\prime}\Lambda_n}\right)-e^{-\frac{\Vert w_n\Vert ^2}{2}}\right)
\\
\\
\hskip3cm+\left(\EE\left(e^{i~w_{n-1}^{\prime}\Lambda_{n-1}}\right)-e^{-\frac{\Vert w_{n-1}\Vert ^2}{2}}~\right)e^{-\frac{\Vert w_n\Vert ^2}{2}},
\end{array}
$$
we also check that
$$
\begin{array}{l}
\left\vert\EE\left(e^{i~\left(w_{n-1}^{\prime}\Lambda_{n-1}+w_n^{\prime}\Lambda_n\right)}\right)-e^{-\frac{\Vert w_{n-1}\Vert ^2}{2}}e^{-\frac{\Vert w_n\Vert ^2}{2}}\right\vert
\leq \epsilon(w_{n-1})+ 
\epsilon(w_n).
\end{array}
$$
Iterating the argument, we conclude that
$$
\begin{array}{l}
\left\vert
\EE\left(e^{i~\left(u_1\Delta^1_0+u_2\Delta^2_0+\sum_{0\leq k\leq n}w_{k}^{\prime}\Lambda_k\right)}\right)-
e^{-\frac{ u_1^2/2+u_2^2/2+
\sum_{0\leq k\leq n}\Vert w_{k}\Vert ^2
}{2}
}
\right\vert.\\
\\
\leq \epsilon(u_1,0,u_2)+ \sum_{0\leq k\leq n}\epsilon(w_{k})\longrightarrow_{N\rightarrow \infty}0,
\end{array}
$$
which ends the proof of the lemma.\cqfd

Combining Lemma \ref{lem-tex-delta} with Theorem~\ref{theo-u-p} and Slutsky's lemma yields Theorem~\ref{theo-loc-tcl}.

We are now in position to prove theorem~\ref{theo-clt-p-intro} and the bias estimates (\ref{bias-estimates}).

\bigskip

{\bf Proof of Theorem~\ref{theo-clt-p-intro} :}

By the second order estimate (\ref{2nd-order}) we have
$$
\begin{array}{l}
\displaystyle\vert\phi^{n-k}(p_k)-\phi^{n-(k-1)}(p_{k-1})-\partial  \phi^{n-k}\left(\phi(p_{k-1})\right)\frac{1}{\sqrt{N}}~\delta_k\vert\leq 
 \frac{\iota_1}{N}~(1-\epsilon_1)^{n-k}~\delta_k^2.
\end{array}$$
Combining this with the telescoping sum formula (\ref{telescoping}) yields the decomposition
$$
\QQ^N_n:=\sqrt{N}(p_n-P_n)=\partial\phi^n(P_0)~\nu_0+\sum_{1\leq k\leq n} \partial  \phi^{n-k}\left(\phi(p_{k-1})\right)~\delta_k+\theta_1^N(1),
$$
with the remainder term
$$
\vert\theta^N_n(1)\vert\leq\epsilon_n^N(1):= \frac{\iota_2}{\sqrt{N}}~(1-\epsilon_1)^n~\nu_0^2~+\frac{\iota_1}{\sqrt{N}}~\sum_{1\leq k\leq n} ~(1-\epsilon_1)^{n-k}~\delta_k^2\longrightarrow_{N\rightarrow\infty}~0\quad \mbox{a.s.}.
$$
On the other hand, using the Lipschitz estimates (\ref{expo-bounds}) and (\ref{Lip-2-ricc}), we have
$$
\begin{array}{l}
 \displaystyle\left\vert
\sum_{1\leq k\leq n} \left[\partial
\phi^{n-k}(\phi(p_{k-1}))-\partial
\phi^{n-k}(\phi(P_{k-1}))\right]~\delta_k~\right\vert\\
\\
 \displaystyle\leq \epsilon_n^N(2):=\iota_3 \sum_{1\leq k\leq n}~(1-\epsilon)^{n-k}~\vert\delta_k\vert~ \vert
p_{k-1}-
P_{k-1}\vert\longrightarrow_{N\rightarrow\infty}~0\quad \mbox{a.s.}.
 \end{array}$$
 This yields the formula
 $$
\QQ^N_n=F_n\left(\nu_0,\delta_1,\ldots,\delta_n\right)+\theta_n^N(2)
$$
with the function
$$
\displaystyle F_n\left(\nu_0,\delta_1,\ldots,\delta_n\right):=\partial\phi^n(P_0)~\nu_0+\sum_{1\leq k\leq n} \partial  \phi^{n-k}\left(P_k\right)~\delta_k,
$$
and the remainder term $\theta_n^N(2)$ satisfying
$$
\vert\theta_n^N(2)\vert\leq \epsilon_n^N(1)+\epsilon_n^N(2).
$$
Combining Slutsky's lemma with the continuous mapping theorem and Corollary~\ref{cor-Z},
for any time horizon $n\geq 0$ we conclude that
$$
(F_k\left(\nu_0,\delta_1,\ldots,\delta_k\right))_{0\leq k\leq n}\hooklongrightarrow_{N\rightarrow\infty}~
(F_k\left(\ZZ_0,\ZZ_1,\ldots,\ZZ_k\right))_{0\leq k\leq n}$$
with the Gaussian random variables $\ZZ_k$ defined in Corollary~\ref{cor-Z}. Thus, we have shown that
$$
\QQ_n^N \hooklongrightarrow_{N \to \infty} F_n\left(\ZZ_0,\ZZ_1,\ldots,\ZZ_n\right) =: \QQ_n.
$$
The recursive formulation (\ref{recursion-QQ}) comes from the decomposition
$$
\widehat{\QQ}^N_n=
\frac{1}{(1+SP_n)(1+Sp_n)}~\QQ^N_n+\widehat{\nu}_n\hooklongrightarrow_{N\rightarrow\infty} \widehat{\QQ}_n:=\frac{1}{(1+SP_n)^2}~\QQ_n+\widehat{\VV}_n.
$$
In the same vein, we have
$$
\QQ^N_{n+1}=A^2~\widehat{\QQ}^N_n+\nu_{n+1}\Longrightarrow \QQ_{n+1}=A^2\widehat{\QQ}_n+\VV_{n+1}.
$$ 
%Observe that
%$$
%(\ref{recursion-QQ})\Longrightarrow \QQ_{n}=\frac{\rho}{(1+SP_{n-1})^{2}}~\QQ_{n-1}+\ZZ_{n}=F_n\left(\ZZ_0,\ZZ_1,\ldots,\ZZ_n\right)
%$$
%with $\rho=A^2$ and the Gaussian random variables $\ZZ_n$ defined in corollary~\ref{cor-Z}.
%The last assertion in the r.h.s. of the above display is a direct consequence of the closed form expression of the differentials $\partial \phi^n$ (\ref{1st-order-exp}). 
%This ends the proof of the  theorem.\cqfd
\cqfd
The proof of the uniform bias estimates (\ref{bias-estimates}) follows the second order Taylor expansions discussed in the proof of Theorem~\ref{theo-clt-p-intro} as we will now demonstrate.

\medskip

{\bf Proof of the bias estimate (\ref{bias-estimates}):}
Using the  second order Taylor expansions discussed in the proof of Theorem~\ref{theo-clt-p-intro} we have the bias estimates
$$
\begin{array}{l}
 \displaystyle
0\leq P_n-\EE(p_n)\leq\frac{ \iota_4}{N}~\left[\EE(\nu_0^2)~(1-\epsilon)^n+~\sum_{1\leq k\leq n}(1-\epsilon)^{n-k}~ \EE(\delta_k^2)\right]\\
 \\
  \displaystyle\hskip5cm +\frac{\iota_5}{\sqrt{N}} \sum_{1\leq k\leq n}~(1-\epsilon)^{n-k}~\EE(\vert\delta_k\vert~ \vert
p_{k-1}-
P_{k-1}\vert).
 \end{array}
$$
The first bias estimte stated in \eqref{bias-estimates} now follows from \eqref{est-p-delta} and the estimates stated in~\eqref{time-unif-bound}.
\textcolor{black}{Indeed, using \eqref{est-p-delta} we check that
\begin{eqnarray*}
0\leq P_n-\EE(p_n)&\leq& \frac{ \iota_6}{N}~\left[P_0~(1-\epsilon)^n+~(1\vee P_0)\sum_{1\leq k\leq n}(1-\epsilon)^{n-k}\right] \\
&&\hskip1.5cm+\frac{\iota_7}{\sqrt{N}}~(1\vee P_0)~ \sum_{1\leq k\leq n}~(1-\epsilon)^{n-k}~~\EE( \vert
p_{k-1}-
P_{k-1}\vert^2)^{1/2}.
\end{eqnarray*}
The end of the proof of the l.h.s. estimate in \eqref{bias-estimates} is now a consequence of \eqref{time-unif-bound}.
We also have the second order decomposition 
\begin{eqnarray*}
g_n - G_n
&=& \frac{CD^2}{(C^2P_n + D^2)^2}~(p_n - P_n)-\frac{CD^2}{(C^2P_n + D^2)^2(C^2p_n + D^2)}~(p_n - P_n)^2
\end{eqnarray*}
Using the bias estimate stated in the l.h.s. of 
 (\ref{bias-estimates}) we readily check that
$$
0\leq G_n-\EE(g_n) \leq \frac{CD^2}{(C^2P_n + D^2)^2}~\frac{\iota_1}{N}~\left[1\vee P_0\right]^{2}+\frac{C}{(C^2P_n + D^2)^2}~\EE\left((p_n - P_n)^2\right)
$$
Again, using the $\LL_k$-mean error estimates stated in~\eqref{time-unif-bound} we check the r.h.s. estimate stated in 
 (\ref{bias-estimates}). This ends the proof of the bias estimate.} \cqfd

\subsection{Inverse raw moments}
We end the section with some bounds on the moments of ratios of the form $\phi(p_n)/p_{n+1}$, $n \ge 1$. 

Using (\ref{wp-p-chi}) we have the formulae, which will be used throughout this section.
\begin{equation}\label{inv-p-eq}
\frac{1}{\widehat{p}_n}=
\left(S+\frac{1}{p_n}\right)~\widehat{\Upsilon}_n(p_n)\quad \mbox{\rm and}\quad
\frac{1}{p_{n+1}}=\frac{1}{A^2\widehat{p}_n+R}~\Upsilon_{n+1}(\widehat{p}_n),
\end{equation}
with the inverse non-central $\chi$-square random variables
$$
\widehat{\Upsilon}_n(p_n):=\frac{1+1/(Sp_n)}{\frac{1}{N}~\widehat{\chi}^{(n,2)}_{N,N/(Sp_n)}}\quad \mbox{\rm and}\quad \Upsilon_{n+1}(\widehat{p}_n):=\frac{1+\left(A^2/R\right)\widehat{p}_n}{\frac{1}{N}~\chi^{(n+1,2)}_{N,N\left(A^2/R\right)\widehat{p}_n}}.
$$

\begin{lem}
For any $n\geq 0$ we have
\begin{equation}\label{inv-wp-2}
\begin{array}{l}
\displaystyle\frac{\phi(p_n)}{p_{n+1}}-1=\frac{1}{N}\frac{A^4}{\phi(p_n)}~\frac{\widehat{\nu}_n^2}{\phi(p_n)+\frac{A^2}{\sqrt{N}}~\widehat{\nu}_n}-\frac{A^2}{\sqrt{N}}~ \frac{\widehat{\nu}_n}{\phi(p_n)}\\
\\
\hskip5cm \displaystyle+\frac{\phi(p_n)}{A^2\widehat{p}_n+R}~\left(\Upsilon_{n+1}(\widehat{p}_n)-1\right).
\end{array}
\end{equation}
\end{lem}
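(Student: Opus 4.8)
The plan is to treat \eqref{inv-wp-2} as a purely algebraic identity, built on the two representations of $p_{n+1}$ already in hand. The natural starting point is the second identity in \eqref{inv-p-eq}, $1/p_{n+1} = \Upsilon_{n+1}(\widehat{p}_n)/(A^2\widehat{p}_n + R)$, which after multiplication by $\phi(p_n)$ gives
\begin{equation*}
\frac{\phi(p_n)}{p_{n+1}} = \frac{\phi(p_n)}{A^2\widehat{p}_n + R}~\Upsilon_{n+1}(\widehat{p}_n).
\end{equation*}
The single arithmetic fact that organises the whole computation is the relation $A^2\widehat{p}_n + R = \phi(p_n) + \tfrac{A^2}{\sqrt{N}}\,\widehat{\nu}_n$. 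I would verify it first: combining the updating step $\widehat{p}_n = (1-g_nC)p_n + \tfrac{1}{\sqrt{N}}\widehat{\nu}_n$ of Theorem~\ref{stoch-perturbation-theo} with $1-g_nC = 1/(1+Sp_n)$ and the elementary identity $\phi(p) = A^2 p/(1+Sp) + R$ (immediate from the parameters in \eqref{def-Ricc-parameters}) yields $A^2\widehat{p}_n + R = A^2 p_n/(1+Sp_n) + R + \tfrac{A^2}{\sqrt{N}}\widehat{\nu}_n = \phi(p_n) + \tfrac{A^2}{\sqrt{N}}\widehat{\nu}_n$.

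Next I would split off the centred part of the inverse $\chi$-square factor by writing $\Upsilon_{n+1}(\widehat{p}_n) = 1 + (\Upsilon_{n+1}(\widehat{p}_n) - 1)$, so that
\begin{equation*}
\frac{\phi(p_n)}{p_{n+1}} - 1 = \left(\frac{\phi(p_n)}{A^2\widehat{p}_n + R} - 1\right) + \frac{\phi(p_n)}{A^2\widehat{p}_n + R}~\bigl(\Upsilon_{n+1}(\widehat{p}_n) - 1\bigr).
\end{equation*}
The second summand is already the last term of \eqref{inv-wp-2}. For the first summand, substituting the displayed relation into the numerator gives $\phi(p_n)/(A^2\widehat{p}_n + R) - 1 = -\tfrac{A^2}{\sqrt{N}}\,\widehat{\nu}_n/(A^2\widehat{p}_n + R)$.

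It then remains to isolate the leading-order piece of this remaining term. I would expand the reciprocal about $\phi(p_n)$ via $1/(\phi(p_n) + \varepsilon) = 1/\phi(p_n) - \varepsilon/\bigl(\phi(p_n)(\phi(p_n) + \varepsilon)\bigr)$ with $\varepsilon = \tfrac{A^2}{\sqrt{N}}\widehat{\nu}_n$, obtaining
\begin{equation*}
-\frac{A^2}{\sqrt{N}}~\frac{\widehat{\nu}_n}{A^2\widehat{p}_n + R} = -\frac{A^2}{\sqrt{N}}~\frac{\widehat{\nu}_n}{\phi(p_n)} + \frac{1}{N}\frac{A^4}{\phi(p_n)}~\frac{\widehat{\nu}_n^2}{\phi(p_n) + \frac{A^2}{\sqrt{N}}\widehat{\nu}_n},
\end{equation*}
which supplies exactly the first two terms of \eqref{inv-wp-2}. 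There is no genuine obstacle here; the only points requiring care are keeping the denominator $A^2\widehat{p}_n + R = \phi(p_n) + \tfrac{A^2}{\sqrt{N}}\widehat{\nu}_n$ consistent across the three terms and tracking the sign in the reciprocal expansion. As a sanity check, recombining the first two terms above over the common denominator $\phi(p_n)(A^2\widehat{p}_n + R)$ collapses the numerator to $-\tfrac{A^2}{\sqrt{N}}\widehat{\nu}_n\,\phi(p_n)$, returning $-\tfrac{A^2}{\sqrt{N}}\widehat{\nu}_n/(A^2\widehat{p}_n + R)$ and confirming the identity.
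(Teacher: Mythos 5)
Your proposal is correct and is essentially the paper's own proof: both arguments rest on the second identity in \eqref{inv-p-eq}, the updating relation $\widehat{p}_n=(1-g_nC)p_n+\tfrac{1}{\sqrt{N}}\widehat{\nu}_n$ giving $A^2\widehat{p}_n+R=\phi(p_n)+\tfrac{A^2}{\sqrt{N}}\widehat{\nu}_n$, and the elementary reciprocal expansion $\tfrac{1}{1+x}=1-x+\tfrac{x^2}{1+x}$, with your version merely splitting $\Upsilon_{n+1}=1+(\Upsilon_{n+1}-1)$ before expanding rather than after.
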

\proof
First recall that 
$$
\widehat{p}_n=\displaystyle(1-g_nC)~p_n+\frac{1}{\sqrt{N}}~ \widehat{\nu}_n.
$$
Combining this with the second identity in \eqref{inv-p-eq}, we have 
$$
\begin{array}{l}
 \displaystyle \frac{\phi(p_n)}{p_{n+1}}=\frac{1}{1+\frac{A^2}{\sqrt{N}}~ \frac{\widehat{\nu}_n}{\phi(p_n)}}+\frac{\phi(p_n)}{A^2\widehat{p}_n+R}~\left(\Upsilon_{n+1}(\widehat{p}_n)-1\right).
 \end{array} 
$$
The result now follows from the decomposition
$$
\frac{1}{1+\frac{A^2}{\sqrt{N}}~ \frac{\widehat{\nu}_n}{\phi(p_n)}}=1-\frac{A^2}{\sqrt{N}}~ \frac{\widehat{\nu}_n}{\phi(p_n)}+\frac{1}{N}\frac{A^4}{\phi(p_n)}~\frac{\widehat{\nu}_n^2}{\phi(p_n)+\frac{A^2}{\sqrt{N}}~\widehat{\nu}_n}.
$$
\cqfd

\begin{prop}
For any $n\geq 0$ and any even number of particles $N>4$ we have the almost sure uniform drift estimates
\begin{equation}\label{lyap-p-wp}
1\leq \EE\left(\frac{\phi(p_n)}{p_{n+1}}~|~p_{n}\right)\leq 1+\frac{\iota}{N}.
\end{equation}

\end{prop}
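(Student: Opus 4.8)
The plan is to take the conditional expectation given $p_n$ of the exact decomposition \eqref{inv-wp-2} and bound the three resulting terms separately. The starting observation is the algebraic identity $\phi(p_n)+\frac{A^2}{\sqrt N}\,\widehat\nu_n=A^2\widehat p_n+R$, which follows from $\widehat p_n=(1-g_nC)p_n+\frac{1}{\sqrt N}\,\widehat\nu_n$ together with $\phi(p)=A^2 p/(1+Sp)+R$. This identity shows that the denominator appearing in the first term on the right of \eqref{inv-wp-2} equals $A^2\widehat p_n+R\geq R>0$, so no cancellation or sign issue can arise there.

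For the lower bound I would note that the middle term $-\frac{A^2}{\sqrt N}\,\widehat\nu_n/\phi(p_n)$ has vanishing conditional mean: given $p_n$ (equivalently, given $\Fa_n$) the gain $g_n$ is fixed while, by \eqref{def-loc-fluctuation}, $\widehat\nu_n$ is a linear combination of the conditionally centred and mutually orthogonal variables $\widehat\Delta^2_n,\widehat\Delta^3_n$, which are independent of $\Fa_n$; hence $\EE(\widehat\nu_n~|~p_n)=0$ and $\phi(p_n)$ is $p_n$-measurable. The two remaining terms are manifestly nonnegative, since $\widehat\nu_n^2\geq 0$, the denominator is $\geq R$, $\phi(p_n)>0$, and $\Upsilon_{n+1}(\widehat p_n)-1\geq 0$ by Proposition~\ref{prop-raw-moments-chi}. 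This already yields $\EE(\phi(p_n)/p_{n+1}~|~p_n)\geq 1$; the same conclusion also follows from conditional Jensen applied to $x\mapsto 1/x$ and $\EE(p_{n+1}~|~p_n)=\phi(p_n)$.

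For the first term I would use the denominator bound $A^2\widehat p_n+R\geq R$ and the lower bound $\phi(p_n)\geq R$ from \eqref{ul-bounds}, reducing it to $\frac{A^4}{NR^2}\,\EE(\widehat\nu_n^2~|~p_n)$. A direct conditional second–moment computation, using orthogonality and unit variance of $\widehat\Delta^2_n,\widehat\Delta^3_n$, gives $\EE(\widehat\nu_n^2~|~p_n)=4 g_n^2D^2(1-g_nC)^2 p_n+2 g_n^4 D^4$, which is uniformly bounded by $6 S^{-2}$ via the crude estimates $(g_nD(1-g_nC))^2 p_n\leq S^{-2}$ and $(g_nD)^4\leq S^{-2}$ already recorded in the proof of the uniform moment bounds. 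Hence this term is $\leq \iota/N$.

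The heart of the argument is the third term, controlled by the inverse non-central $\chi$-square estimate of Proposition~\ref{prop-raw-moments-chi}. Conditioning on $\widehat{\Fa}_n$ and using the Markov realisation of Corollary~\ref{cor-p-chi-evol}, under which $\frac1N\chi^{(n+1,2)}_{N,N(A^2/R)\widehat p_n}$ is, given $\widehat p_n$, distributed as $\frac{1}{2(N/2)}\chi^2_{N,N(A^2/R)\widehat p_n}$, estimate \eqref{inv-moments-k-2} with $k=1$ gives $0\leq \EE(\Upsilon_{n+1}(\widehat p_n)-1~|~\widehat{\Fa}_n)\leq \frac{2\omega_1}{N}\,(1+(A^2/R)\widehat p_n)$, which requires $N/2\geq 3$, i.e. the hypothesis that $N>4$ is even. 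The decisive simplification is the identity $\frac{\phi(p_n)}{A^2\widehat p_n+R}\,(1+(A^2/R)\widehat p_n)=\phi(p_n)/R$, after which \eqref{ul-bounds} bounds $\phi(p_n)/R\leq (a/c)/R=:\iota$; taking the further conditional expectation given $p_n$ bounds the third term by $\iota/N$. Summing the three contributions gives $\EE(\phi(p_n)/p_{n+1}~|~p_n)\leq 1+\iota/N$. I expect the main obstacle to be precisely the random, possibly small denominator $\phi(p_n)+\frac{A^2}{\sqrt N}\,\widehat\nu_n$ in \eqref{inv-wp-2}: it is neutralised by recognising it as $A^2\widehat p_n+R\geq R$, together with the telescoping cancellation $1+(A^2/R)\widehat p_n=(A^2\widehat p_n+R)/R$ that collapses the non-central $\chi$-square correction to a uniformly bounded quantity.
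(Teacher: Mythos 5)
Your proof is correct and follows essentially the same route as the paper: take conditional expectations in the decomposition \eqref{inv-wp-2}, kill the middle term via $\EE(\widehat{\nu}_n\,|\,p_n)=0$, bound the quadratic term by $\phi(p_n)\geq R$, $\phi(p_n)+\frac{A^2}{\sqrt{N}}\widehat{\nu}_n=A^2\widehat{p}_n+R\geq R$ and $\EE(\widehat{\nu}_n^2\,|\,p_n)\leq \iota$, and control the $\Upsilon$-term with Proposition~\ref{prop-raw-moments-chi} together with the exact cancellation $\bigl(1+(A^2/R)\widehat{p}_n\bigr)/(A^2\widehat{p}_n+R)=1/R$, which is precisely what the paper's looser ``boundedness'' remark amounts to. If anything you are more careful than the paper on the lower bound, and rightly so via conditional Jensen, since $\Upsilon_{n+1}(\widehat{p}_n)-1$ is not pointwise nonnegative but only nonnegative in conditional expectation given $\widehat{p}_n$.
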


\proof
First note that from the previous lemma, we have
\begin{align*}
  \displaystyle   \EE\left(\frac{\phi(p_n)}{p_{n+1}}~\bigg|~p_n,\widehat{p}_n\right)
   \displaystyle =1-\frac{A^2}{\sqrt{N}}~ \frac{\widehat{\nu}_n}{\phi(p_n)}+\frac{1}{N}\frac{A^4}{\phi(p_n)}~\frac{\widehat{\nu}_n^2}{\phi(p_n)+\frac{A^2}{\sqrt{N}}~\widehat{\nu}_n}\\
+~\frac{\phi(p_n)}{A^2\widehat{p}_n+R}~\EE\left(\left(\Upsilon_{n+1}(\widehat{p}_n)-1\right)~|~\widehat{p}_n\right).
 \end{align*} 
Due to \eqref{inv-moments-k}, for any even number of particles $N>4$ we have the estimate
$$
0\leq 
\EE\left(\Upsilon_n(x)\right)-1\leq 
\frac{4}{N}~\left(1+\frac{A^2}{R}~x\right)~\left(1+\frac{4}{N-4}\right).
$$
Since both $\widehat{p}_n$ and $\phi(p_n)$ are bounded, it follows that
$$
  \EE\left(\frac{\phi(p_n)}{p_{n+1}}~\bigg|~p_n,\widehat{p}_n\right) \leq 1-\frac{A^2}{\sqrt{N}}~ \frac{\widehat{\nu}_n}{\phi(p_n)}+\frac{1}{N}\frac{A^4}{\phi(p_n)}~\frac{\widehat{\nu}_n^2}{\phi(p_n)+\frac{A^2}{\sqrt{N}}~\widehat{\nu}_n}+\frac{\iota_1}{N}.
  $$
  for some finite constant $\iota_1$.
Taking the expectation we check that
\begin{align*}
    \displaystyle \EE\left(\frac{\phi(p_n)}{p_{n+1}}~|~p_n\right)
   \displaystyle &\leq \EE\left(1+\frac{1}{N}\frac{A^4}{\phi(p_n)}~\frac{\widehat{\nu}_n^2}{\phi(p_n)+\frac{A^2}{\sqrt{N}}~\widehat{\nu}_n}~|~p_n\right)+\frac{\iota_1}{N}\\\
      \\
         \displaystyle &\leq   1+\frac{1}{N}\left(\iota_1+\frac{A^4}{R^2}~\EE\left(~\widehat{\nu}_n^2~|~p_n\right) \right),
 \end{align*} 
 where the last line follows from the estimates
\begin{equation}
\phi(p_n)\geq R\quad\mbox{\rm and}\quad R\leq 
A^2\widehat{p}_n+R=\phi(p_n)+\frac{A^2}{\sqrt{N}}~ \widehat{\nu}_n.
\label{eq:lowerbounds}
\end{equation}
The proposition now follows from the almost sure estimate
\begin{eqnarray}
\EE\left(\widehat{\nu}_n^2~|~p_n\right)&=&(2g_n(1-g_nC)D~\sqrt{p_n})^2+(\sqrt{2}~g_n^2D^2)^2 \notag\\
&=&2S
\frac{p_n^3}{(1+Sp_n)^4}~\left(2+Sp_n\right)\leq 4S~\frac{p_n^3}{(1+Sp_n)^3}\leq \frac{4}{S^2}.
\label{eq:est-nuhat}
\end{eqnarray}
\cqfd

\begin{theo}
For any even $k\geq 2$ there exists some  finite constant $\iota_k$ such that
for any $n\geq 0$ and any even parameter $N>2(k+2)$ we have the almost sure uniform estimates
\begin{equation}\label{ratio-k}
1\leq 
\EE\left(\left(\frac{\phi(p_n)}{p_{n+1}}\right)^k~|~p_n\right)\leq 1+\frac{\iota_k}{N}.
\end{equation}
\end{theo}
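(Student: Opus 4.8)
The plan is to treat the two inequalities separately: the lower bound is immediate, while the upper bound requires a two–stage conditioning argument that extracts the sharp $O(1/N)$ rate. For the lower bound, since $t\mapsto t^k$ is convex for $k\geq 2$, conditional Jensen gives $\EE((\phi(p_n)/p_{n+1})^k\mid p_n)\geq (\EE(\phi(p_n)/p_{n+1}\mid p_n))^k\geq 1$, the last step being the drift estimate \eqref{lyap-p-wp} (applicable since $N>2(k+2)>4$).

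For the upper bound the starting point is the clean multiplicative representation read off from \eqref{wp-p-chi}: writing $x_n:=(A^2/R)\widehat{p}_n$ for the scaled non-centrality parameter,
$$
\frac{\phi(p_n)}{p_{n+1}}=\frac{\phi(p_n)}{R}\cdot\frac{N}{\chi^{(n+1,2)}_{N,Nx_n}}.
$$
First I would condition on $(p_n,\widehat{p}_n)$: conditionally on $\widehat{p}_n$, the variable $\chi^{(n+1,2)}_{N,Nx_n}$ is a non-central $\chi$-square with $N$ degrees of freedom and non-centrality $Nx_n$, so the inverse-moment bound \eqref{inv-moments-k} of Proposition~\ref{prop-raw-moments-chi} applies pointwise in $\widehat{p}_n$ (with the proposition's parameter equal to $N/2$; evenness of $N$ and $N>2(k+2)$ guarantee the hypothesis $N/2>k+1$). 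After using $(\phi(p_n)/R)^k(1+x_n)^{-k}=(\phi(p_n)/(A^2\widehat{p}_n+R))^k$ this yields, for a constant $c_k$ depending only on $k$,
$$
\EE\Big(\big(\tfrac{\phi(p_n)}{p_{n+1}}\big)^k \,\Big|\, p_n,\widehat{p}_n\Big)\leq \Big(\frac{\phi(p_n)}{A^2\widehat{p}_n+R}\Big)^k+\frac{c_k}{N}\Big(\frac{\phi(p_n)}{R}\Big)^k.
$$
Since $\phi(p_n)\leq a/c$ by \eqref{ul-bounds}, the error term is at most $\iota_k/N$, and recalling $A^2\widehat{p}_n+R=\phi(p_n)+\tfrac{A^2}{\sqrt N}\widehat{\nu}_n$ the main term equals $(1+\Lambda_n)^{-k}$ with $\Lambda_n:=\tfrac{A^2}{\sqrt N}\,\widehat{\nu}_n/\phi(p_n)$, which is $(p_n,\widehat{p}_n)$-measurable.

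The decisive step is then to show $\EE((1+\Lambda_n)^{-k}\mid p_n)\leq 1+\iota_k/N$. The crucial structural fact is that $1+\Lambda_n=(A^2\widehat{p}_n+R)/\phi(p_n)\geq R/\phi(p_n)\geq Rc/a=:\delta_0>0$ is uniformly bounded below (using $\widehat{p}_n\geq 0$ and \eqref{ul-bounds}), so $\lambda\mapsto(1+\lambda)^{-k}$ and its first two derivatives are controlled on the range of $\Lambda_n$. A second-order Taylor expansion gives $(1+\Lambda_n)^{-k}\leq 1-k\Lambda_n+\tfrac{k(k+1)}{2}\delta_0^{-k-2}\Lambda_n^2$. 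I would then use that the first-order term has vanishing conditional mean: conditionally on $p_n$, $\widehat{\nu}_n$ is a centered combination of the fresh observation-noise fluctuations $\widehat{\Delta}^2_n,\widehat{\Delta}^3_n$, which are independent of $p_n$ by the filtration structure (as $p_n~\underline{\in}~\Fa_n$ while $\Va_n$ is fresh), so $\EE(\Lambda_n\mid p_n)=0$. Finally $\EE(\Lambda_n^2\mid p_n)=\tfrac{A^4}{N\phi(p_n)^2}\EE(\widehat{\nu}_n^2\mid p_n)\leq \iota/N$ by the conditional-variance computation \eqref{eq:est-nuhat} together with $\phi(p_n)\geq R$. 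Combining the two stages gives the claimed $1+\iota_k/N$.

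The main obstacle is not any individual estimate but securing the sharp $O(1/N)$ rate: a crude bound on $\EE((1+\Lambda_n)^{-k}\mid p_n)$ would only produce $O(1/\sqrt N)$, so the cancellation $\EE(\widehat{\nu}_n\mid p_n)=0$ of the linear term is essential, and it must be paired with the uniform positive lower bound $1+\Lambda_n\geq\delta_0$ so that the Taylor remainder stays integrable and controlled uniformly in $n$ and $N$. The even-$k$ and even-$N$ hypotheses enter only to permit the $2n=N$ application of Proposition~\ref{prop-raw-moments-chi}.
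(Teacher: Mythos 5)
Your proposal is correct, and it reaches \eqref{ratio-k} by a genuinely different assembly of the same two key ingredients. The paper works with the additive decomposition \eqref{pre-binomial-2}, $\frac{\phi(p_n)}{p_{n+1}}-1=\widehat{\alpha}_n(p_n)+\beta_{n+1}(\widehat{p}_n)$, expands the $k$-th power binomially, and controls each order separately: the $\beta$-terms via the \emph{centered} inverse-moment bound \eqref{inv-moments-k-3} (through \eqref{resum-2}), the $\widehat{\alpha}$-terms via \eqref{est-p-delta}, with Young-type splittings of the cross terms, explicit computations for $k=2,3$ and an inductive scheme for general $k$; the first-order cancellation is inherited from the proof of \eqref{lyap-p-wp}. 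You instead condition in two stages: first you integrate out the fresh $\chi^2$-noise given $(p_n,\widehat p_n)$ by applying the \emph{raw} inverse-moment bound \eqref{inv-moments-k} directly at power $k$ to the multiplicative representation \eqref{wp-p-chi}, which collapses the prediction-step randomness in one stroke and leaves only $\left(\phi(p_n)/(A^2\widehat p_n+R)\right)^k=(1+\Lambda_n)^{-k}$ plus an $O(1/N)$ error; then you integrate out $\widehat\nu_n$ given $p_n$ via a single second-order Taylor expansion, using exactly the same cancellation $\EE(\widehat\nu_n\mid p_n)=0$ and variance bound \eqref{eq:est-nuhat} that the paper uses, together with the uniform lower bound $1+\Lambda_n\geq RS/(A^2+RS)$ coming from \eqref{ul-bounds} and the identity \eqref{eq:lowerbounds}. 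What your route buys is uniformity in $k$ without induction and without the binomial/Young bookkeeping (and in fact it never uses evenness of $k$); what the paper's route buys is a set of intermediate conditional moment bounds on $\widehat\alpha_n$ and $\beta_{n+1}$ that are stated in a reusable form. All the individual steps you invoke check out: the freezing argument legitimizing the conditional use of Proposition~\ref{prop-raw-moments-chi} (with $N$ even and $N/2>k+2$), the measurability of $\widehat\alpha_n$-type quantities with respect to $(p_n,\widehat p_n)$, and the control of the Taylor remainder on the range $\{1+\lambda\geq\delta_0\}$.
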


\proof
We will show that \eqref{ratio-k} holds for the cases $k = 2, 3$ and then outline how the general case may be obtained inductively.  
By \eqref{inv-moments-k-3}, for any $n\geq 1$ any  $k\geq 0$ and  any even $N>2(k+2)$ we have the uniform estimates
\begin{gather}\label{resum-2}
\EE\left(\left(\frac{\Upsilon_{n}(x)-1}{A^2x+R}\right)^k\right)\leq 
 \frac{\iota_k}{N}.
\end{gather}

Now, recall that
\begin{equation}\label{pre-binomial-2}
\frac{\phi(p_n)}{p_{n+1}}-1=\widehat{\alpha}_n(p_n)+\beta_{n+1}(\widehat{p}_n),
\end{equation}
with the random variables
$$
\widehat{\alpha}_n(p_n)=-\frac{1}{\sqrt{N}}~\frac{A^2}{A^2\widehat{p}_n+R}~ \widehat{\nu}_n
\quad
\mbox{\rm and}
\quad
\beta_{n+1}(\widehat{p}_n):=\phi(p_n)~\frac{\Upsilon_{n+1}(\widehat{p}_n)-1}{A^2\widehat{p}_n+R}.
$$
Using the estimates \eqref{eq:lowerbounds},  \eqref{eq:est-nuhat} and \eqref{resum-2},
we check the almost sure uniform estimate
$$
\begin{array}{l}
\displaystyle
\EE\left(\left(\widehat{\alpha}_n(p_n)+\beta_{n+1}(\widehat{p}_n)
\right)~|~p_n\right)\\
\\
\displaystyle\leq \frac{1}{N}\frac{A^4}{R^2}~\EE(\widehat{\nu}_n^2~|~p_n)+\phi(p_n)~\EE\left(~\frac{\Upsilon_{n+1}(\widehat{p}_n)-1}{A^2\widehat{p}_n+R}
~|~p_n\right)\leq \frac{\iota_1}{N}.
\end{array}
$$
In the same vein, we have
$$
\begin{array}{l}
\displaystyle
\EE\left(\left(\widehat{\alpha}_n(p_n)+\beta_{n+1}(\widehat{p}_n)
\right)^2~|~p_n,\widehat{p}_n\right)\\
\\
\displaystyle=\widehat{\alpha}_n(p_n)^2+\phi(p_n)^2~\EE\left(\left(\frac{\Upsilon_{n+1}(\widehat{p}_n)-1}{A^2\widehat{p}_n+R}
\right)^2~|~p_n,\widehat{p}_n\right)\\
\\
\displaystyle\hskip3cm+2~\phi(p_n)~\widehat{\alpha}_n(p_n)~\EE\left(\frac{\Upsilon_{n+1}(\widehat{p}_n)-1}{A^2\widehat{p}_n+R}~|~p_n,\widehat{p}_n\right).\end{array}
$$
On the other hand, we have
$$
\begin{array}{l}
\displaystyle
\widehat{\alpha}_n(p_n)~\EE\left(\frac{\Upsilon_{n+1}(\widehat{p}_n)-1}{A^2\widehat{p}_n+R}~|~p_n,\widehat{p}_n\right)\\
\\
\displaystyle\leq\widehat{\alpha}_n(p_n)^2+\EE\left(\frac{\Upsilon_{n+1}(\widehat{p}_n)-1}{A^2\widehat{p}_n+R}~|~p_n,\widehat{p}_n\right)^2\leq \widehat{\alpha}_n(p_n)^2+\iota_2/N^2.
\end{array}
$$
Along with the almost sure uniform bound
$$
\EE\left(\widehat{\alpha}_n(p_n)^2~|~p_n\right)\leq {\iota_3}/{N},
$$
which follows from \eqref{est-p-delta}, we conclude that
$$
\EE\left(\left(\widehat{\alpha}_n(p_n)+\beta_{n+1}(\widehat{p}_n)
\right)^2~|~p_n\right)\leq  {\iota_4}/{N}.
$$
Combining this with \eqref{pre-binomial-2}, it follows that
$$
1\leq \EE\left(\left(\frac{\phi(p_n)}{p_{n+1}}\right)^2~|~p_n=x\right)\leq 1+{\iota_5}/{N}.
$$

For odd powers, for instance we have
$$
\begin{array}{l}
\displaystyle
\EE\left(\left(\widehat{\alpha}_n(p_n)+\beta_{n+1}(\widehat{p}_n)\right)^3~|~p_n,\widehat{p}_n\right)\\
\\
\displaystyle=
\widehat{\alpha}_n(p_n)^3+3~\widehat{\alpha}_n(p_n)^2\phi(p_n)~\EE\left(\frac{\Upsilon_{n+1}(\widehat{p}_n)-1}{A^2\widehat{p}_n+R}~|~\widehat{p}_n\right)\\
\\
\displaystyle+3~\widehat{\alpha}_n(p_n)\phi(p_n)^2~\EE\left(\left(\frac{\Upsilon_{n+1}(\widehat{p}_n)-1}{A^2\widehat{p}_n+R}\right)^2~|~\widehat{p}_n\right)+\phi(p_n)^3~\EE\left(\left(~\frac{\Upsilon_{n+1}(\widehat{p}_n)-1}{A^2\widehat{p}_n+B^2}\right)^3~|~\widehat{p}_n\right).
\end{array}$$
Now observe that
$$
\begin{array}{l}
\displaystyle 6~\widehat{\alpha}_n(p_n)\phi(p_n)^2~\EE\left(\left(\frac{\Upsilon_{n+1}(\widehat{p}_n)-1}{A^2\widehat{p}_n+R}\right)^2~|~\widehat{p}_n\right)\\
\\
\displaystyle\leq \left(3\widehat{\alpha}_n(p_n)\phi(p_n)^2\right)^2+\EE\left(\left(\frac{\Upsilon_{n+1}(\widehat{p}_n)-1}{A^2\widehat{p}_n+R}\right)^4~|~\widehat{p}_n\right)\leq \left(3\widehat{\alpha}_n(p_n)\phi(p_n)^2\right)^2+\frac{\iota_6}{N}.
\end{array}$$
Using the estimates (\ref{resum-2}), this implies that
$$
\begin{array}{l}
\displaystyle
\EE\left(\left(\widehat{\alpha}_n(p_n)+\beta_{n+1}(\widehat{p}_n)\right)^3~|~p_n\right)\\
\\
\displaystyle\leq 
\EE\left(\widehat{\alpha}_n(p_n)^3~|~p_n\right)+\frac{1}{2}\left(3\phi(p_n)^2\right)^2~\EE\left(\widehat{\alpha}_n(p_n)^2~|~p_n\right)+\frac{\iota_6}{2N}
\\
\\
\displaystyle\hskip3cm+3\,\phi(p_n)~\EE\left(\widehat{\alpha}_n(p_n)^2~|~p_n\right)~\frac{\iota_1}{N}+\phi(p_n)^3~\frac{\iota_3}{N}.
\end{array}$$
Using the fact that
$$
\EE\left(\vert \widehat{\alpha}_n(p_n)\vert^3~|~p_n\right)\leq \frac{\iota_7}{N\sqrt{N}},
$$
it follows that
$$
\begin{array}{l}
\displaystyle
\EE\left(\left(\widehat{\alpha}_n(p_n)+\beta_{n+1}(\widehat{p}_n)\right)^3~|~p_n\right)\leq \frac{\iota_8}{N}.
\end{array}$$
Using \eqref{pre-binomial-2} and Jensen's inequality we check that
$$
1\leq \EE\left(\left(\frac{\phi(p_n)}{p_{n+1}}\right)^3~|~p_n\right)\leq 1+\frac{\iota_9}{N}.
$$
More generally, for any $l\geq 2$ and $k\geq 1$ we have
$$
\EE\left(\vert \widehat{\alpha}_n(p_n)\vert^l~|~p_n=x\right)\leq \frac{\iota_l(1)}{N}\quad \mbox{\rm and}\quad \EE\left(\beta_{n+1}(\widehat{p}_n)^{2k}~|~\widehat{p}_n=x\right)\leq \frac{\iota_2(k)}{N}
$$
for some finite constants $\iota_1(k),\iota_2(k)$. Also note that due to the binomial formula, we have
%The end of the proof of (\ref{ratio-k}) is a now a direct application of the binomial formula to (\ref{pre-binomial-2}), as outlined below
$$
\begin{array}{l}
\displaystyle
\EE\left(\left(\frac{\phi(p_n)}{p_{n+1}}\right)^k~|~p_n,\widehat{p}_n\right)\\
\\
\displaystyle=1+\EE\left(\left(
\widehat{\alpha}_n(p_n)+\beta_{n+1}(\widehat{p}_n)\right)~|~p_n,\widehat{p}_n\right)+
\sum_{2\leq l\leq k}\left(\begin{array}{c}
k\\
l
\end{array}\right)~\EE\left(\left(
\widehat{\alpha}_n(p_n)+\beta_{n+1}(\widehat{p}_n)\right)^{l}~|~p_n,\widehat{p}_n\right)
\end{array}.$$
We may estimate the above summands, for any $l\geq 2$ by
$$
\begin{array}{l}
\displaystyle
\EE\left(\left(
\widehat{\alpha}_n(p_n)+\beta_{n+1}(\widehat{p}_n)\right)^{l}~|~p_n,\widehat{p}_n\right)\\
\\
\displaystyle\leq \vert \widehat{\alpha}_n(p_n)\vert^{l}+\frac{1}{2}~\sum_{0\leq i< l}\left(\begin{array}{c}
l\\
i
\end{array}\right)\left(
\widehat{\alpha}_n(p_n)^{2l}+\EE\left( \left(\beta_{n+1}(\widehat{p}_n)\right)^{2(l-i)}~|~p_n,\widehat{p}_n\right)\right)\\
\\
\displaystyle\leq  \vert \widehat{\alpha}_n(p_n)\vert^{l}+\frac{1}{2}~\sum_{0\leq i< l}\left(\begin{array}{c}.
l\\
i
\end{array}\right)\left(
\widehat{\alpha}_n(p_n)^{2l}+\frac{\iota_{2}(l-i)}{N}\right)
\end{array}$$
The end of the proof of (\ref{ratio-k}) is now a direct consequence of the moment estimates stated above.
\cqfd

\section{Ensemble Kalman state estimates}

\subsection{A Feynman-Kac formula}\label{change-probab-sec}

\begin{theo}\label{ref-theo-FK}
Let $X_n$ be a Markov chain on some measurable state space $E$ with Markov transitions $M(x,dy)$ and starting at some state $X_0=x$. 
Also let $H(x)$ be some positive measurable function on $E$. 
Assume there exists some function $h$ and some parameters $\epsilon_h\in ]0,1[$ and $\kappa_h \in ]0, \infty[$ such that
\begin{equation}\label{HhYP}
H^h(x):=H(x)~h(x)M(1/h)(x)\leq (1-\epsilon_h)\quad \mbox{and}\quad M(1/h)(x)\leq \kappa_h ~M(1/h)(y).
\end{equation}
Then, for any $n\geq 1$ and any bounded measurable function $F$ on the path space $E^n$ we have the Feynman-Kac change of measure formula
$$
\begin{array}{l}
\displaystyle
\EE\left(F(X_1,\ldots,X_n)~\left(\prod_{1\leq k\leq n}H(X_k)\right)~|~X_0=x\right)\\
\\
\displaystyle=\EE\left(F(X^{1/h}_1,\ldots,X^{1/h}_n)~~\frac{M(h^{-1})(X^{1/h}_{0})}{M(h^{-1})(X^{1/h}_{n})}\left(
\prod_{1\leq k\leq n}H^h(X^{1/h}_{k})\right)~|~X^{1/h}_0=x\right).
\end{array}
$$
In the above display, $X^{1/h}_n$ stands for the Markov chain with Markov transitions
\begin{equation}\label{Markov-h}
M^{1/h}(x,dy)=M(x,dy)~\frac{h^{-1}(y)}{M(h^{-1})(x)}.
 \end{equation}
In addition, we have the uniform exponential decay estimate
\begin{equation}\label{HhYP-d}
\sup_{x} \EE\left(\prod_{1\leq k\leq n}H(X_k)~|~X_0=x\right)\leq \kappa_h~(1-\epsilon_h)^n.
 \end{equation}
\end{theo}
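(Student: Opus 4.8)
The plan is to prove the identity by a Doob $h$-transform computation carried out at the level of finite-dimensional path distributions, and then to extract the exponential decay (\ref{HhYP-d}) by specialising to $F\equiv 1$. The whole argument rests on a single factorisation of the reference kernel. First I would check that the kernel $M^{1/h}$ in (\ref{Markov-h}) is a genuine Markov transition: integrating against $M(x,dy)$ gives $\int M^{1/h}(x,dy)=M(h^{-1})(x)/M(h^{-1})(x)=1$, which only requires $0<M(h^{-1})(x)<\infty$. Solving (\ref{Markov-h}) for $M(x,dy)$ then yields the pointwise relation
$$
M(x,dy)=M^{1/h}(x,dy)~M(h^{-1})(x)~h(y),
$$
which is the engine of the computation.

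Next I would expand the left-hand side as an iterated integral over the path $(x_1,\ldots,x_n)$ with $x_0=x$,
$$
\EE\Big(F(X_1,\ldots,X_n)\prod_{1\le k\le n}H(X_k)~\big|~X_0=x\Big)=\int F(x_1,\ldots,x_n)\prod_{1\le k\le n}H(x_k)~M(x_{k-1},dx_k),
$$
and substitute the factorisation above for each factor $M(x_{k-1},dx_k)$. This replaces the reference kernel by $M^{1/h}$ throughout and produces the accumulated weight $\prod_{1\le k\le n}H(x_k)\,h(x_k)\,M(h^{-1})(x_{k-1})$. Re-interpreting the transformed integral as an expectation under the law of the tilted chain $X^{1/h}$ is then immediate.

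The step requiring the most care — and the only genuinely nontrivial point — is to recognise this weight as $\prod_{1\le k\le n}H^h(x_k)$ up to a telescoping boundary term. Since $H^h(x_k)=H(x_k)\,h(x_k)\,M(1/h)(x_k)$, the sole mismatch between the accumulated weight and $\prod_k H^h(x_k)$ is the index shift in the factor $M(1/h)(x_{k-1})$ versus $M(1/h)(x_k)$. Taking the ratio of the two products, the factors $H(x_k)h(x_k)$ cancel and the $M(1/h)$ factors telescope, leaving precisely
$$
\frac{\prod_{1\le k\le n}M(1/h)(x_{k-1})}{\prod_{1\le k\le n}M(1/h)(x_k)}=\frac{M(h^{-1})(x_0)}{M(h^{-1})(x_n)},
$$
which is exactly the prefactor displayed in the statement. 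This establishes the change of measure formula.

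Finally, for the decay estimate I would set $F\equiv1$ in the identity just proved, so that
$$
\EE\Big(\prod_{1\le k\le n}H(X_k)~\big|~X_0=x\Big)=\EE\Big(\frac{M(h^{-1})(x)}{M(h^{-1})(X^{1/h}_n)}\prod_{1\le k\le n}H^h(X^{1/h}_k)~\big|~X^{1/h}_0=x\Big).
$$
The first hypothesis in (\ref{HhYP}) gives $\prod_{1\le k\le n}H^h(X^{1/h}_k)\le(1-\epsilon_h)^n$ almost surely, while the two-sided comparison $M(1/h)(x)\le\kappa_h M(1/h)(y)$ bounds the boundary ratio $M(h^{-1})(x)/M(h^{-1})(X^{1/h}_n)\le\kappa_h$ uniformly in the endpoint. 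Multiplying these two bounds and taking the supremum over the starting point $x$ yields (\ref{HhYP-d}). I do not anticipate any real obstacle beyond bookkeeping the index shift correctly; the positivity and finiteness of $M(h^{-1})$ needed to justify each step follow from the standing assumption that $H$ and $h$ are positive.

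\cqfd
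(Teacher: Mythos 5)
Your proposal is correct and follows essentially the same route as the paper: both arguments hinge on the $h$-transform kernel $M^{1/h}$, the substitution of the factorisation $M(x,dy)=M^{1/h}(x,dy)\,M(h^{-1})(x)\,h(y)$ into the path-space integral, the telescoping of the index-shifted $M(h^{-1})$ factors into the boundary ratio $M(h^{-1})(x_0)/M(h^{-1})(x_n)$, and the derivation of the decay bound (\ref{HhYP-d}) by taking $F\equiv 1$ and invoking the two hypotheses in (\ref{HhYP}). The only cosmetic difference is that the paper phrases the computation through the Radon--Nikodym densities of the path laws $\PP_n$ and $\PP^{1/h}_n$, whereas you substitute the kernel factorisation directly; these are the same calculation.
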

\proof
We set $h^{-1}(x)=1/h(x)$. Let $X^{1/h}_n$ be  the Markov chain starting at $X^{1/h}_0=X_0=x_0$ whose Markov transitions $M^{1/h}$ are given by (\ref{Markov-h}) so that
\begin{equation}
 M^{1/h}(h)(x)=\frac{1}{M(h^{-1})(x)}.
 \label{def-Mh}
\end{equation}
Observe that
$$
\begin{array}{l}
\PP^{1/h}_n(d(x_0,\ldots,x_n))\\
\\
:=\PP((X^{1/h}_0,\ldots,X^{1/h}_n)\in d(x_0,\ldots,x_n))\\
\\
\displaystyle=\eta_0(dx_0)~M(x_0,dx_1)~\frac{h^{-1}(x_1)}{M(h^{-1})(x_0)}\ldots M(x_{n-1},dx_n)~\frac{h^{-1}(x_n)}{M(h^{-1})(x_{n-1})}.
\end{array}$$
Rewritten in terms of expectations we have
$$
\EE\left(F(X_0,\ldots,X_n)~\prod_{1\leq k\leq n}\frac{h^{-1}(X_k)}{M(h^{-1})(X_{k-1})}\right)=
\EE\left(F(X_0^{1/h},\ldots,X_n^{1/h})\right).
$$
Observe that
$$
\begin{array}{l}
\PP^{1/h}_n(d(x_0,\ldots,x_n))\\
\\
\displaystyle=\PP_n(d(x_0,\ldots,x_n))~\left(\frac{h^{-1}(x_1)}{M(h^{-1})(x_0)}\ldots ~\frac{h^{-1}(x_n)}{M(h^{-1})(x_{n-1})} \right)
\end{array}$$
with
$$
\PP_n(d(x_0,\ldots,x_n)):=\eta_0(dx_0)~M(x_0,dx_1)~\ldots M(x_{n-1},dx_n).
$$
This clearly implies that
$$
\begin{array}{l}
\PP_n(d(x_0,\ldots,x_n))\\
\\
\displaystyle=\PP^{1/h}_n(d(x_0,\ldots,x_n))~\left(\left(h(x_1)~M(h^{-1})(x_0)\right)\ldots ~\left(h(x_n)~M(h^{-1})(x_{n-1})\right) \right)\\
\\
\displaystyle=\PP^{1/h}_n(d(x_0,\ldots,x_n))~\left(\frac{h(x_1)}{M^{1/h}(h)(x_0)}\ldots ~\frac{h(x_n)}{M^{1/h}(h)(x_{n-1})} \right),
\end{array}$$
where we have used \eqref{def-Mh} in the final step.
Rewritten in terms of expectations, the above formula takes the following form
$$
\EE\left(F(X_0^{1/h},\ldots,X_n^{1/h})\prod_{1\leq k\leq n}\frac{h(X_k^{1/h})}{M^{1/h}(h)(X^{1/h}_{k-1})}\right)=\EE\left(F(X_0,\ldots,X_n)\right).
$$
Replacing $F(X_0,\ldots,X_n)$ by
$$
F(X_0,\ldots,X_n)\prod_{1\leq k\leq n}H(X_k),
$$
and again recalling \eqref{def-Mh}, we obtain the formula
$$
\begin{array}{l}
\displaystyle
\EE\left(F(X_0,\ldots,X_n)\prod_{1\leq k\leq n}H(X_k)\right)\\
\\
\displaystyle=\EE\left(F(X_0^{1/h},\ldots,X^{1/h}_n)\left(\prod_{1\leq k\leq n}H(X^{1/h}_k)\right)\left(\prod_{1\leq k\leq n}\frac{h(X_k^{1/h})}{M^{1/h}(h)(X^{1/h}_{k-1})}\right)\right)\\\\
\displaystyle=\EE\left(F(X^{1/h}_0,\ldots,X^{1/h}_n)\left(\prod_{1\leq k\leq n}(H h)(X^{1/h}_k)\right)\left(\prod_{0\leq k< n}M(h^{-1})(X^{1/h}_{k})\right)\right)\\
\\
\displaystyle=\EE\left(F(X^{1/h}_0,\ldots,X^{1/h}_n)~M(h^{-1})(X^{1/h}_{0})(Hh)(X_{n}^{1/h})\left(\prod_{1\leq k< n}(Hh)(X^{1/h}_k)M(h^{-1})(X^{1/h}_{k})\right)\right)\\
\\
\displaystyle=\EE\left(F(X^{1/h}_0,\ldots,X^{1/h}_n)~\frac{M(h^{-1})(X^{1/h}_{0})}{M(h^{-1})(X^{1/h}_n)}\left(\prod_{1\leq k\leq n}(Hh)(X^{1/h}_k)M(h^{-1})(X^{1/h}_{k})\right)\right).
\end{array}
$$
%The above formula rewritten in terms of
%$$
%H^h(x)=
%H(x)h(x)M(h^{-1})(x)
%$$ takes the following form
%$$
%\EE\left(F(X_0,\ldots,X_n)\prod_{1\leq k\leq n}H(X_k)\right)=\EE\left(F(X^{1/h}_0,\ldots,X^{1/h}_n)\frac{M(h^{-1})(X^{1/h}_{0})}{M(h^{-1})(X^{1/h}_{n})}~
%\prod_{1\leq k\leq n}H^h(X^{1/h}_{k})\right).
%$$
This ends the proof of the  the Feynman-Kac change of measure formula.
The exponential decay estimate (\ref{HhYP-d}) is now a direct consequence of the regularity condition (\ref{HhYP}).
\cqfd

We are now in position to prove the uniform almost sure exponential decays stated in Theorem~\ref{key-theo}.

{\bf Proof of first inequality in \eqref{expo-decay-p}}
Up to a time change, it suffices to prove the result for $l=1$ and for any given initial condition $p_0=x$.
Recall the Markov transition, $\mathcal{P}$, of the stochastic Riccati process $p_n$.
We choose in Theorem~\ref{ref-theo-FK} the function
$$
H(x)=\left(\frac{\vert A\vert }{1+Sx}\right)^k\quad \mbox{\rm and}\quad
h(x)=x^k.
$$
Suppose that $\vert A\vert<1$.  In this situation, we have
$$
\EE\left(\prod_{1\leq l\leq n}\left(\frac{\vert A\vert}{1+Sp_l}\right)^k\right)\leq (1-\epsilon_k)^n\quad \mbox{\rm with}\quad \epsilon_k:=1-\vert A\vert^k.
$$
Now assume that $\vert A\vert \geq 1$.  In this situation, we have
$
\vert A\vert<(A^2+RS)$.
Observe that
$$
H^h(x)=H(x)~h(x)\mathcal{P}(h^{-1})(x)=\left(\frac{\vert A\vert }{1+Sx}\right)^k~\EE\left(\left(\frac{p_0}{p_1}\right)^k~|~p_0=x\right).
$$
Using \eqref{ratio-k} we check that
$$
H^h(x)=\left(\frac{\vert A\vert }{1+Sx}~\frac{x}{\phi(x)}\right)^k~\EE\left(\left(\frac{\phi(p_0)}{p_1}\right)^k~|~p_0=x\right)\leq \left(\frac{\vert A\vert }{1+Sx}~\frac{x}{\phi(x)}\right)^k~\left(1+\frac{\iota_k}{N}\right).
$$
This yields the estimate
$$
H^h(x)\leq \left(\frac{\vert A\vert }{(A^2+RS)+R/x}\right)^k~\left(1+\frac{\iota_k}{N}\right)\leq \left(\frac{\vert A\vert }{A^2+RS}\right)^k~\left(1+\frac{\iota_k}{N}\right).
$$
Choosing $N$ such that
\begin{equation*}
\displaystyle N\geq N_{k}:=\iota_k \left(1-\left(\frac{\vert A\vert}{A^2+RS}\right)^k\right)^{-1}
\end{equation*}
yields
\begin{equation*}
\left(1+\frac{\iota_k}{N}\right)\leq 1+\sqrt{\epsilon_k},
\quad\mbox{\rm
with}
\quad
\sqrt{\epsilon_k}:=\left(1-\left(\frac{\vert A\vert}{A^2+RS}\right)^k\right).
\end{equation*}
We readily check that
\begin{eqnarray*}
H^h(x)&\leq& \left(1-\sqrt{\epsilon_k}\right)\left(1+\sqrt{\epsilon_k}\right)=1-\epsilon_k<1.
\end{eqnarray*}
This implies that
$$
\EE\left(\left(\prod_{1\leq l\leq n}\frac{\vert A\vert}{1+Sp_l}\right)^k~|~p_0=x\right)\leq \left(1-\epsilon_k\right)^{n-1}\EE\left(\frac{\Pa(h^{-1})(x)}{\Pa(h^{-1})(X^{1/h}_{n-1})}\right).
$$
On the other hand, using (\ref{ricc-lower-cv}) and (\ref{ratio-k}), for any even parameter $N>2(k+2)$ we have
$$
\frac{1}{(A^2/S+R)^k}
\leq 
\Pa(h^{-1})(x)=\frac{1}{\phi(p_0)^k}~\EE\left(\left(\frac{\phi(p_0)}{p_1}\right)^k~|~p_0=x\right)
\leq \frac{1}{R^k}~\left(1+\frac{\iota_k}{N}\right)
$$
We conclude that
$$
\EE\left(\left(\prod_{1\leq l\leq n}\frac{\vert A\vert}{1+Sp_l}\right)^k\right)\leq   \left(1-\epsilon_k\right)^{n-1}~\left(A^2R/S+R^2\right)^k \left(1+\frac{\iota_k}{N}\right).
$$
This ends the proof of the exponential decays estimates  stated in the l.h.s. of (\ref{expo-decay-p}).
\cqfd

\subsection{Fluctuation analysis}\label{theo-enkf-wX-proof}

\begin{lem}
For any $k\geq 1$ there exists  some parameter $N_k\geq 1$ such that for any $N\geq N_k$ and $n\geq 0$ we have the time uniform estimates
\begin{equation}\label{estimates-m-X-x-k}
\EE\left(\vert m_{n}-X_{n}\vert^k\right)\vee\EE\left(\vert \widehat{m}_{n}-X_{n}\vert^k\right)\vee \EE\left(\vert Y_n-Cm_{n}\vert^k\right)\leq \iota_k.
\end{equation}
\end{lem}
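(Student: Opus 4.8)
The plan is to obtain all three bounds as bookkeeping consequences of Theorem~\ref{key-theo}, which already furnishes the essential estimate $\EE\left(\vert M_n\vert^k\right)\leq \iota_k$ for $M_n=m_n-X_n$, combined with the explicit perturbation formulae of Theorem~\ref{stoch-perturbation-theo} and the uniform moment bounds (\ref{est-Lk-Delta}) on the local fluctuations. Indeed, the first term $\EE\left(\vert m_n-X_n\vert^k\right)=\EE\left(\vert M_n\vert^k\right)$ is precisely the second estimate in (\ref{expo-decay-p}), so nothing further is needed there, and the threshold $N_k$ in the statement may be taken to be the one produced by Theorem~\ref{key-theo}.

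For the innovation term I would invoke the observation equation $Y_n=CX_n+DV_n$ from (\ref{lin-Gaussian-diffusion-filtering}) to write
$$
Y_n-Cm_n=-C(m_n-X_n)+DV_n=-CM_n+DV_n.
$$
Since $V_n$ is a centered unit-variance Gaussian, $\EE\left(\vert V_n\vert^k\right)$ is a finite constant depending only on $k$; combining this with the bound on $\EE\left(\vert M_n\vert^k\right)$ and the elementary inequality $\vert a+b\vert^k\leq 2^{k-1}(\vert a\vert^k+\vert b\vert^k)$ gives $\EE\left(\vert Y_n-Cm_n\vert^k\right)\leq \iota_k$, uniformly in $n$ and $N\geq N_k$.

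For the updated mean I would start from the updating formula in Theorem~\ref{stoch-perturbation-theo}, namely
$$
\widehat{m}_n-X_n=M_n+g_n(Y_n-Cm_n)+\frac{1}{\sqrt{N+1}}\,\widehat{\upsilon}_n,
$$
with $\widehat{\upsilon}_n=-g_nD\,\widehat{\Delta}^1_n$ as in (\ref{def-loc-fluctuation}). The key deterministic fact is that the particle gain is uniformly bounded, $\vert g_n\vert=\vert Cp_n\vert/(C^2p_n+D^2)\leq 1/\vert C\vert$, independently of $n$, $N$ and $p_n$. Hence $\vert\widehat{\upsilon}_n\vert\leq (\vert D\vert/\vert C\vert)\,\vert\widehat{\Delta}^1_n\vert$, whose $k$-th moment is controlled by (\ref{est-Lk-Delta}). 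Applying $\vert a+b+c\vert^k\leq 3^{k-1}(\vert a\vert^k+\vert b\vert^k+\vert c\vert^k)$ together with the two estimates already established yields $\EE\left(\vert\widehat{m}_n-X_n\vert^k\right)\leq \iota_k$.

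Since every ingredient is either quoted from Theorem~\ref{key-theo} and (\ref{est-Lk-Delta}) or follows from the pointwise bound on $g_n$, there is no genuine obstacle; the substantive content is the harder moment bound $\EE\left(\vert M_n\vert^k\right)\leq \iota_k$ established via the Feynman--Kac change of measure of Theorem~\ref{ref-theo-FK}. The only care required is to route the uniform boundedness of $g_n$ through each summand so that the constants $\iota_k$ remain independent of the time horizon and of the possibly unstable signal drift $A$.
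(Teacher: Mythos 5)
Your handling of the innovation term and of $\widehat m_n-X_n$ is correct and essentially what the paper does: writing $Y_n-Cm_n=-CM_n+DV_n$ and $\widehat m_n-X_n=(1-g_nC)M_n+g_nDV_n+\frac{1}{\sqrt{N+1}}\widehat\upsilon_n$, and using the uniform gain bound together with \eqref{est-Lk-Delta}, reduces both to the bound on $\EE(\vert M_n\vert^k)$. The genuine gap is how you obtain that bound: you quote it as ``the second estimate in \eqref{expo-decay-p}'', i.e.\ as part of Theorem~\ref{key-theo}. In the paper's logical architecture this is circular. The argument at the end of section~\ref{change-probab-sec} (the Feynman--Kac change of measure, Theorem~\ref{ref-theo-FK}) proves only the \emph{first} inequality in \eqref{expo-decay-p}, namely the conditional exponential decay of the random products $\Ea_{l,n}$; the second inequality, $\EE(\vert M_n\vert^k)\le\iota_k$, is exactly the first assertion of the present lemma, and the paper states immediately after the lemma's proof that it ``completes the proof of Theorem~\ref{key-theo}''. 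Your closing paragraph makes the misreading explicit: the Feynman--Kac theorem does not by itself yield the moment bound on $M_n$, only the product decay. So your proposal assumes precisely the statement whose proof is the substantive content of this lemma.

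The missing step, which is the paper's actual proof, converts the decay of $\Ea_{l,n}$ into the uniform bound on $M_n$. From \eqref{ref-key-obs} one has $M_{n+1}=\alpha_nM_n+\Upsilon_{n+1}$ with $\alpha_n=A/(1+Sp_n)$ and $\Upsilon_{n+1}=Ag_nDV_n-BW_{n+1}+\frac{1}{\sqrt{N+1}}(A\widehat\upsilon_n+\upsilon_{n+1})$; iterating gives
$$
M_{n+1}=\Ea_{0,n}\,M_0+\Upsilon_{n+1}+\sum_{1\le l\le n}\Ea_{l,n}\,\Upsilon_l .
$$
The perturbations $\Upsilon_l$ have moments of every order bounded uniformly in $l$ and $N$ (bounded gain, Gaussian noises, \eqref{est-Lk-Delta}), but they are \emph{not} independent of the products $\Ea_{l,n}$, so one controls each summand by Cauchy--Schwarz and the tower property: $\EE(\vert\Ea_{l,n}\Upsilon_l\vert^{k})\le\EE(\vert\Ea_{l,n}\vert^{2k})^{1/2}\,\EE(\vert\Upsilon_l\vert^{2k})^{1/2}\le\iota_k(1-\epsilon_{2k})^{(n-l)/2}$, where the first inequality of \eqref{expo-decay-p} is applied at order $2k$ (so the threshold is really $N\ge N_{2k}$, which is where the condition $N\ge N_k$ in the lemma enters). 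Minkowski's inequality and the resulting geometric series then give a bound on $\EE(\vert M_{n+1}\vert^k)^{1/k}$ independent of the time horizon, even for $\vert A\vert>1$. Without this recursion-and-summation argument, your proof establishes nothing beyond what it assumes.
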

\proof
Using (\ref{ref-key-obs}) we check the recursion
$$
(m_{n+1}-X_{n+1})=\alpha_n~(X_n-m_n)+\beta_{n+1},
$$
with the random variables
\begin{align*}
\alpha_n&:=A(1-g_nC)=\frac{A}{1+Sp_n} \\
\beta_{n+1}&:=Ag_nDV_n-BW_{n+1}+\frac{1}{\sqrt{N+1}}~\left(A\widehat{\upsilon}_n+\upsilon_{n+1}\right).
\end{align*}
This yields the formula
\begin{equation}\label{expo-decay-sum-ref}
(m_{n+1}-X_{n+1})=\Ea_{0,n}~(X_0-m_0)+\beta_{n+1}+\sum_{0\leq k\leq n}\Ea_{k,n}~\beta_{k}.
\end{equation}
 The raw moment estimates of the differences
 $(m_{n}-X_{n})$ and $(Y_n-Cm_{n})$ stated in \eqref{estimates-m-X-x-k} are now a direct consequence of the exponential decay estimate stated in \eqref{expo-decay-p}. The raw moment estimates of the difference
 $(\widehat{m}_{n}-X_{n})$ is easily checked using  the following decomposition
 $$
  \widehat{m}_{n}-X_{n}=\frac{A}{1+Sp_n}~(m_n-X_n)+g_nCDV_n+\frac{1}{\sqrt{N+1}}~\widehat{\upsilon}_n
 $$
 This ends the proof of the lemma.
 \cqfd
 
 Note that this completes the proof of Theorem \ref{key-theo}. Now we come to the proof of the Lyapunov estimate (\ref{Lyap-form}).
 
 \bigskip
 
 {\bf Proof of (\ref{Lyap-form}):}
  Combining (\ref{expo-decay-sum-ref}) with the uniform almost sure exponential decays stated in (\ref{expo-decay-p}) we check the uniform estimate
 $$
\EE\left(\vert M_{n+1}\vert~|~(M_0,p_0)\right)\leq \EE(\Ea_{0,n}~|~p_0)~\vert M_0\vert+\iota_1\leq \iota_1+\iota_2 (1-\epsilon_2)^{n}\vert M_0\vert.$$
Thus, for any $\epsilon\in ]0,1]$ there exists some  time horizon $n_{\epsilon}\geq 1$ such that
for any $n\geq n_{\epsilon}$ we have
 $$
\EE\left(\vert M_{n}\vert~|~(M_0,p_0)\right)\leq \epsilon~\vert M_0\vert +\iota.
$$
The estimate (\ref{Lyap-form}) now comes from the fact that the one-step Riccati map $\phi$ is uniformly bounded.
 \cqfd

{\bf Proof of Theorem~\ref{theo-enkf-wX}:}
First note that
\begin{equation}\label{deco-XX-0}
m_{n+1}-\widehat{X}_{n+1}^-=\displaystyle A~(\widehat{m}_{n}-\widehat{X}_n)+\frac{1}{\sqrt{N+1}}~\upsilon_{n+1}.
\end{equation}
We also have the decomposition
\begin{equation}\label{deco-XX}
\begin{array}{l}
\displaystyle
(\widehat{m}_{n}-\widehat{X}_{n})\\
\\
=\displaystyle (m_n-\widehat{X}^-_{n})+(g_n-G_n)~(Y_n-Cm_n)-G_n~C(m_n-\widehat{X}^-_{n})+\frac{1}{\sqrt{N+1}}~\widehat{\upsilon}_n\\
\\
=\displaystyle (1-G_nC)(m_n-\widehat{X}^-_{n})+\frac{1}{\sqrt{N}}~\sqrt{N}(g_n-G_n)~(Y_n-Cm_n)+\frac{1}{\sqrt{N+1}}~\widehat{\upsilon}_n.
\end{array}
\end{equation}
This yields the recursion
$$
(\widehat{m}_{n}-\widehat{X}_{n}) =\alpha_n~(\widehat{m}_{n-1}-\widehat{X}_{n-1})+\frac{1}{\sqrt{N}}~ \beta_n,
$$
with the parameters
\begin{eqnarray*}
\alpha_n&:=&A(1-G_nC)= \frac{A}{1+SP_n}\\
\beta_n&:=&\frac{1}{\sqrt{1+1/N}}~ \left((1-G_nC) \upsilon_{n}+\widehat{\upsilon}_n\right)
+\sqrt{N}(g_n-G_n)~(Y_n-Cm_n).
\end{eqnarray*}
We conclude that
$$
(\widehat{m}_{n}-\widehat{X}_{n}) =E_n(P_0)~(\widehat{m}_{0}-\widehat{X}_{0})+\frac{1}{\sqrt{N}}~\sum_{1\leq l\leq n}~ E_{l,n}(P_0)~\beta_l.
$$
Finally, observe that
$$
(\widehat{m}_{0}-\widehat{X}_{0})
=\displaystyle (1-G_0C)(m_0-\widehat{X}^-_{0})+\frac{1}{\sqrt{N}}~\left(\sqrt{N}(g_0-G_0)~(Y_0-Cm_0)+\frac{1}{\sqrt{1+1/N}}~\widehat{\upsilon}_0\right).
$$
The estimate (\ref{estimates-m-wX-k}) is now a consequence of the exponential decays (\ref{est-expo-Ea}), the uniform bounds (\ref{estimates-m-X-x-k})
and the mean error estimates stated in Theorem~\ref{theo-u-p}.

\medskip

The proof of \eqref{estimates-bias-m-wX-k} follows similar arguments. Combining (\ref{deco-XX-0}) and (\ref{deco-XX}) we check the decomposition
$$
\begin{array}{l}
\displaystyle
m_{n+1}-\widehat{X}_{n+1}^-=A~ (1-G_nC)(m_n-\widehat{X}^-_{n})+A(g_n-G_n)~(Y_n-C\widehat{X}_n^-)\\
\\
\hskip2cm\displaystyle -\frac{1}{N}~\sqrt{N}~A(g_n-G_n)C~~\sqrt{N}(m_n-\widehat{X}^-_n)+\frac{1}{\sqrt{N+1}}~A\widehat{\upsilon}_n+\frac{1}{\sqrt{N+1}}~\upsilon_{n+1}.
\end{array}
$$
Taking the expectations we obtain the formula
$$
\begin{array}{l}
\displaystyle
(m_{n+1}^o-\widehat{X}_{n+1}^-)=\alpha_n~(m_n^o-\widehat{X}^-_{n})+\beta_n/N
\end{array}
$$
with the parameters $(\alpha_n,\beta_n)$ now given by
\begin{eqnarray*}
\alpha_n&:=&A(1-G_nC)= {A}/{(1+SP_n)}\\
\beta_n&:=&A~N(\EE(g_n)-G_n)~(Y_n-C\widehat{X}_n^-)-AC~\EE\left(\sqrt{N}~(g_n-G_n)\sqrt{N}(m_n-\widehat{X}^-_n)~|~\Ya_{n-1}\right).
\end{eqnarray*}

Using the uniform estimates stated in Theorem~\ref{theo-u-p}
and the gain bias estimates (\ref{bias-estimates}), we obtain, for $k \ge 1$
   $$
 \sup_{n\geq 0}\EE\left(\vert\beta_n\vert^k\right)^{1/k}\leq \iota_k~(1\vee P_0)^2.
  $$
The end of the proof of (\ref{estimates-bias-m-wX-k}) now follows the same lines of arguments as the proof of the estimates (\ref{estimates-m-wX-k}), thus we leave the details as an exercise for the reader.
\cqfd

{\bf Proof of Theorem~\ref{term-theo}:}
First note that from the proof of \eqref{estimates-m-wX-k}, we have the following decomposition
\begin{equation}
\sqrt{N}(\widehat m_n - \widehat{X}_n) = \mathcal E_n(P_0)\sqrt{N}(\widehat m_0 - \widehat{X}_0) + \sum_{1 \le \ell \le n}\mathcal E_{\ell, n}(P_0)\beta_\ell,
\label{decomp1}
\end{equation}
where we recall that 
\begin{align*}
\mathcal E_{\ell, n}(p) &= \prod_{\ell < k \le n}\frac{A}{1 + SP_\ell(p)},\\
\beta_\ell &= \frac{1}{\sqrt{1+1/N}}~ \left((1-G_nC) \upsilon_{n}+\widehat{\upsilon}_n\right)
+\sqrt{N}(g_n-G_n)~(Y_n-Cm_n).
\end{align*}
We first deal with $\sqrt{N}(\widehat m_0 - \widehat{X}_0)$ on the right hand side of \eqref{decomp1}. Again, from the proof of \eqref{estimates-m-wX-k}, recall that 
\begin{equation}
\sqrt{N}(\widehat{m}_{0}-\widehat{X}_{0})
=\displaystyle (1-G_0C)\sqrt{N}(m_0-\widehat{X}^-_{0})+\frac{1}{\sqrt{1+1/N}}~\widehat{\upsilon}_0+\sqrt{N}(g_0-G_0)~(Y_0-Cm_0).
\label{decomp2}
\end{equation}
Further observe that
$$
(Y_0-Cm_0)=(Y_0-C\widehat{X}^-_0)+C(\widehat{X}^-_0-m_0)\quad \mbox{\rm and}\quad
(\widehat{X}^-_0-m_0)
\longrightarrow_{N\rightarrow\infty} 0\quad \mbox{\rm a.e.}.$$ 
Applying Slutsky's lemma, this yields the weak convergence
$$
\sqrt{N}(g_0-G_0)~C(\widehat{X}^-_0-m_0)\hooklongrightarrow_{N\rightarrow\infty} 0.
$$
Using the fluctuation theorems \ref{theo-loc-tcl} and \ref{theo-clt-p-intro} we also have
\begin{align*}
\sqrt{N}(m_0-\widehat{X}^-_{0}) &\hooklongrightarrow_{N\rightarrow\infty} \UU_0,\\
\sqrt{N}(g_0-G_0)(Y_0-C\widehat{X}^-_0) &\hooklongrightarrow_{N\rightarrow\infty} \GG_0(Y_0-C\widehat{X}^-_0),\\
\frac{1}{\sqrt{1+1/N}}~\widehat{\upsilon}_0 &\hooklongrightarrow_{N\rightarrow\infty} \widehat\UU_0.
\end{align*}
In the same vein, applying the continuous mapping theorem or the $\delta$-method (see for instance Theorem 9.3.3 and Lemma 9.3.1 in~\cite{fk}) we check that
$$
\sqrt{N}(\widehat{m}_{0}-\widehat{X}_{0})\hooklongrightarrow_{N\rightarrow\infty}  (1-G_0C) \UU_0+\widehat\UU_0+
\GG_0(Y_0-C\widehat{X}^-_0).
$$

To deal with the sum on the right hand side of \eqref{decomp1}, again, thanks to Theorem \ref{theo-loc-tcl},  and arguing as above $\beta_\ell$ converges weakly to 
$$
(1-G_{\ell}C)\UU_{\ell} + \widehat\UU_{\ell}+\GG_{\ell} (Y_{\ell}-C\widehat{X}_{\ell}^-).
$$
In the same vein,   we check that 
\begin{align*}
\widehat{\XX}_n^N &\hooklongrightarrow_{N\rightarrow\infty}  \widehat{\XX}_n
:= \sum_{0 \le \ell \le n}E_{\ell, n}(P_0)\left((1-G_{\ell}C)\UU_{\ell} + \widehat\UU_{\ell}+\GG_{\ell} (Y_{\ell}-C\widehat{X}_{\ell}^-)\right).
\end{align*}
The proof of the fluctuation theorem at the level of the process
in the sense of convergence of finite dimensional distributions is conducted combining the continuous mapping theorem with Slutsky's lemma.
The proof of the recursive formulation (\ref{XX-w-XX-theo})  is also easily checked using the decompositions (\ref{deco-XX-0}) and (\ref{deco-XX}).  This ends the proof of the theorem.\cqfd

\newpage
\section*{Appendix}
\subsection*{Proof of Theorem~\ref{stab-riccati-stoch}}

For the identity function $\Ua(p):=p$, due to \eqref{ul-bounds}, we have
$$
\Pa(\Ua)(p)=
\EE(\Ua(p_{n+1})~|~p_n=p)=\phi(p)\leq \alpha=A^2/S+R.
$$
 In addition, for any $\epsilon\in ]0,1]$ we have
$$
\Ua_{\epsilon}:=1+\frac{\epsilon}{\alpha}~\Ua\geq 1\Longrightarrow \Pa(\Ua_{\epsilon})(p)\leq 1+\epsilon\Longrightarrow \Pa(\Ua_{\epsilon})\leq 1+\epsilon~ \Ua_{\epsilon}.
$$
The above estimate implies that $\Ua_{\epsilon}\geq 1$ is a Lyapunov function.
On the other hand, $\Pa(p,dq)=k(p,q)~dq$ has a continuous density $k(p,q)>0$ with respect to the variables $(p,q)$ and the Lebesgue measure $dq$ on $\RR_+$. Thus, for any compact set $\varpi\subset \RR_+$ there exists some function $\varphi$ from $[0,\infty[$ into itself such that 
$$
k_{\varpi}(q):=\inf_{p\in \varpi}k(p,q)=k(\varphi_{\varpi}(q),q)>0 \Longrightarrow 0<\epsilon_{\varpi}:=\int~k_{\varpi}(q)~dq<1.
$$
This implies that
$$
\Pa(p,dq) \ge \epsilon_{\varpi}~\nu_{\varpi}(dq)\quad \mbox{\rm with}\quad \nu_{\varpi}(dq):=\frac{k_{\varpi}(q)~dq}{\int~k_{\varpi}(p)~dp},
$$
from which we prove the Dobrushin local contraction property 
$$
\varpi_r:=[0,r]\Longrightarrow
\beta_{r}(\Pa):=\sup_{(p,q)\in \varpi_r^2}\Vert \Ka(p, \point)-\Ka(q, \point)  \Vert_{\tiny tv}<1-\epsilon_{\varpi}<1.
$$
For any $\epsilon_1,\epsilon_2\in ]0,1]$ we set
$$
\Ua_{\epsilon_1,\epsilon_2}:=\epsilon_1\Ua_{\epsilon_2}\Longrightarrow\Pa(\Ua_{\epsilon_1,\epsilon_2})\leq \epsilon_1\left(1+\epsilon_2\Ua_{\epsilon_2}\right)\leq \epsilon_1+\epsilon_2~\Ua_{\epsilon_1,\epsilon_2}.
$$
Choosing the parameters $(r,\epsilon_2)$ such that $(1/r+\epsilon_2)<1$ for any $\epsilon_1\in [0,1[$ we can check that
$$
p,q\geq r\Longrightarrow\frac{\Vert \Pa(p, \cdot) - \Pa(q, \cdot) \Vert_{\Ua_{\epsilon_1,\epsilon_2}}}{1 + \Ua_{\epsilon_1,\epsilon_2}(p) + \Ua_{\epsilon_1,\epsilon_2}(q)}<1.
$$
The detailed proof of the above assertion can be found in section 8.2.7 in~\cite{dm-penev}.
In addition, for any $x,y\leq r$ we also have
$$
\epsilon_1<\frac{1-\beta_r(\Ka)}{4r}\Longrightarrow \frac{\Vert \Pa(x, \cdot) - \Pa(y, \cdot) \Vert_{\Ua_{\epsilon_1,\epsilon_2}}}{1 + \Ua_{\epsilon_1,\epsilon_2}(x) + \Ua_{\epsilon_1,\epsilon_2}(y)}\leq \beta_r(\Ka)+4\epsilon_1 r<1.
$$
We conclude that 
$$
(1/r+\epsilon_2)<1\quad\mbox{\rm and}\quad \epsilon_1<\frac{1-\beta_r(\Pa)}{4r}\quad\Longrightarrow\quad
\beta_{\Ua_{\epsilon_1,\epsilon_2}}(\Pa)<1,
$$
which concludes the proof. \cqfd

\subsection*{Proof of Lemma~\ref{lem-ricc}}
Following~\cite{brand}, we have
\begin{equation}\label{f1-pw}
\phi^n(x)+v=\lambda_2~\frac{\left(x+(v-\lambda_1)\right)~\left(1-\lambda^{n+1}\right)+(\lambda_2-\lambda_1)~\lambda^{n+1}}{\left(x+(v-\lambda_1)\right)~\left(1-\lambda^{n}\right)+(\lambda_2-\lambda_1)~\lambda^{n}}.
\end{equation}
Next, we check that the r.h.s. expression in the above display is non negative. Notice that
$$
\phi^n(x)=\frac{\left(x+(v-\lambda_1)\right)\left(\lambda_2~\left(1-\lambda^{n+1}\right)-v~\left(1-\lambda^{n}\right)\right)-(v-\lambda_1)(\lambda_2-\lambda_1)~\lambda^{n}}{\left(x+(v-\lambda_1)\right)~\left(1-\lambda^{n}\right)+(\lambda_2-\lambda_1)~\lambda^{n}}.
$$
Rewritten in a slightly different form (\ref{f1-pw}) takes the following form
$$
\phi^n(x)=(\lambda_2-v)+
\frac{\left(x+(v-\lambda_1)\right)~-(\lambda_2-\lambda_1)}{\left(x+(v-\lambda_1)\right)~\left(1-\lambda^{n}\right)+(\lambda_2-\lambda_1)~\lambda^{n}}~\left(\lambda_2-\lambda_1\right)~\lambda^n.
$$
This yields the formula
$$
\phi^n(x)-r=
\frac{x-r}{\left(x+(v-\lambda_1)\right)~\left(1-\lambda^{n}\right)+(\lambda_2-\lambda_1)~\lambda^{n}}~\left(\lambda_2-\lambda_1\right)~\lambda^n,
$$
which ends the proof of \eqref{sol-ricc}. 
Recalling that $\phi$ is an increasing function, we check that
$$
\phi(0)=\frac{b}{d}\leq 
\phi(x)\leq \lim_{x\rightarrow\infty}\phi(x)=\frac{a}{c}\quad \mbox{\rm and}\quad r=\phi(r)\leq \frac{a}{c},
$$
which ends the proof of \eqref{ul-bounds}.

For any  $x,y\in \RR_+$ we also have
$$
\begin{array}{l}
\displaystyle\frac{\phi^n(x)-\phi^n(y)}{x-y}\\
\\
\displaystyle=\frac{(\lambda_2-\lambda_1)^2~\lambda^n}{\left[\left(x+(v-\lambda_1)\right)~\left(1-\lambda^{n}\right)+(\lambda_2-\lambda_1)~\lambda^{n}\right]\left[\left(y+(v-\lambda_1)\right)~\left(1-\lambda^{n}\right)+(\lambda_2-\lambda_1)~\lambda^{n}\right]}.
\end{array}$$

This ends the proof of the r.h.s. estimate in \eqref{expo-bounds}. The proof of the l.h.s. estimate in (\ref{expo-bounds}) is a direct consequence of the easily checked estimate
$$
\vert \phi^n(x)-r\vert \leq \vert x-r\vert ~
\frac{\left(\lambda_2-\lambda_1\right)}{\left(x+(v-\lambda_1)\right)~\wedge (\lambda_2-\lambda_1)}~~\lambda^n.
$$

We check (\ref{2nd-order}) using the second order formula
$$
\begin{array}{l}
\displaystyle\frac{\phi^n(x)-\phi^n(y)}{x-y}\\
\\
\displaystyle=\frac{(\lambda_2-\lambda_1)^2~\lambda^n}{\left[\left(y+(v-\lambda_1)\right)~\left(1-\lambda^{n}\right)+(\lambda_2-\lambda_1)~\lambda^{n}\right]^2}\\
\\
\displaystyle-\frac{(\lambda_2-\lambda_1)^2~\lambda^n}{\left[\left(y+(v-\lambda_1)\right)~\left(1-\lambda^{n}\right)+(\lambda_2-\lambda_1)~\lambda^{n}\right]^2}\times\left(\frac{(x-y)\left(1-\lambda^{n}\right)}{\left[\left(x+(v-\lambda_1)\right)~\left(1-\lambda^{n}\right)+(\lambda_2-\lambda_1)~\lambda^{n}\right]}\right).
\end{array}$$
Finally observe that
$$
(\ref{def-rho})\Longrightarrow
\partial \phi^n(x)=\partial (\phi(\phi^{n-1}(x)))=(\partial \phi)(\phi^{n-1}(x))~\partial \phi^{n-1}(x)=\prod_{0\leq k<n}\frac{\rho}{(c\phi^{k}(x)+d)^2}
$$
with the convention $\phi^0(x)=x$.
This ends the proof of (\ref{1st-order-exp}) and the lemma.\cqfd

\subsection*{Proof of Lemma~\ref{lem-Helmert}}
It clearly  suffices to prove (\ref{charact-Z}) for $\omega_1=0$.
 For any $u,v\in \RR$ we have
$$
\EE\left(\exp{\left(i(uZ_1+vZ^2_1)\right)}\right)=(1-2iv)^{-1/2}~\exp{\left(-\frac{u^2}{2(1-2iv)}\right)}.
$$
Then, for any $u,v\in\RR$ we have $$
\EE\left(\exp{\left(
\frac{u}{N}\sum_{1\leq i\leq N}Z_i+
\frac{v}{N}\sum_{1\leq i\leq N}Z^2_i\right)}\right)=\left(\frac{1}{\sqrt{1-2v/N}}\right)^N
~\exp{\left(-\frac{u^2}{2N(1-2v/N)}\right)}.
$$
This yields the characteristic function formula
$$
\EE\left(\exp{\left(i\left(u~\widecheck{\Za}+v~\widetilde{\Za}\right)\right)}\right)~\exp{\left(\frac{u^2+v^2}{2}\right)}=\exp{\left(\epsilon_1(u,v)+\epsilon_2(v)\right)}.
$$
with the functions
\begin{eqnarray*}
\epsilon_1(u,v)&:=&\frac{u^2}{2}\left(1-\frac{1}{\left(1-2i{v}/{\sqrt{2N}}\right)}\right)=-i~\frac{u^2v}{\sqrt{2N}}~\frac{1}{\left(1-2i{v}/{\sqrt{2N}}\right)}\\
&&\\
\epsilon_1(v)&:=&\frac{v^2}{2}-i\frac{Nv}{\sqrt{2N}}-\frac{N}{2}\log{\left(1-\frac{2iv}{\sqrt{2N}}\right)}.
\end{eqnarray*}
Using the estimates
$$
\vert-\log{(1-ix)}-\left(ix-\frac{x^2}{2}\right)\vert\leq 2~\vert x \vert^3\quad\mbox{\rm and}\quad \vert ix/(1-iy)\vert=\frac{\vert x\vert}{\sqrt{1+y^2}}\leq \vert x\vert
$$
we also check that
$$
\vert\epsilon_1(u,v)\vert\leq \frac{u^2\vert v\vert}{\sqrt{2N}}\quad\mbox{\rm and}\quad
\vert\epsilon_2(v)\vert\leq \frac{N}{2}~\left(2\frac{v}{\sqrt{2N}}\right)^3=\vert v\vert^3\sqrt{\frac{2}{N}}.
$$
We conclude that
$$
\EE\left(e^{i\left(u\,\widecheck{\Za}+v\,\widetilde{\Za}\right)}\right)-e^{-\frac{u^2+v^2}{2}}=\left(e^{\epsilon_1(u,v)+\epsilon_2(v)}-1\right)~e^{-\frac{u^2+v^2}{2}}.
$$
Finally, using the estimate
$$
\vert e^z-1\vert\leq \vert z\vert~\int_0^1\vert e^{tz}\vert~dt\leq \vert z\vert~\int_0^1\vert e^{t~\mbox{\footnotesize Re}(z)}\vert~ dt \leq
\vert z\vert~e^{\vert z\vert},
$$
we check that
$$
\left\vert \EE\left(e^{i\left(u\,\widecheck{\Za}+v\,\widetilde{\Za}\right)}\right)-e^{-\frac{u^2+v^2}{2}}\right\vert\leq \epsilon(u,v)~e^{-\frac{u^2+v^2}{2}}.
$$
This ends the proof of the lemma.\cqfd

\end{document}